\documentclass[11pt,reqno]{amsart}

\usepackage{a4wide}
\usepackage{amsmath,amssymb}
\usepackage{bbm}
\usepackage{xcolor}
\usepackage{graphicx}
\usepackage{ae,aecompl}
\usepackage{xcolor}
\usepackage{tcolorbox}




\newcommand\reallywidehat[1]{%
\savestack{\tmpbox}{\stretchto{%
  \scaleto{%
    \scalerel*[\widthof{\ensuremath{#1}}]{\kern-.6pt\bigwedge\kern-.6pt}%
    {\rule[-\textheight/2]{1ex}{\textheight}}
  }{\textheight}%
}{0.5ex}}%
\stackon[1pt]{#1}{\tmpbox}%
}

\newcommand{\ver}{{\vert\kern-0.25ex\vert\kern-0.25ex\vert }}

\allowdisplaybreaks

\theoremstyle{plain}
\newtheorem{theorem}{Theorem}[section]
\newtheorem{prop}[theorem]{Proposition}
\newtheorem{lemma}[theorem]{Lemma}
\newtheorem{coro}[theorem]{Corollary}

\theoremstyle{definition}
\newtheorem{definition}[theorem]{Definition}
\newtheorem{remark}[theorem]{Remark}

\newcommand{\Z}{{\mathbb Z}}

\newcommand{\R}{{\mathbb R}}
\newcommand{\N}{{\mathbb N}}
\newcommand{\C}{{\mathbb C}}

\newcommand{\XX}{{\mathbb X}}

\newcommand{\cA}{{\mathcal A}}
\newcommand{\cB}{{\mathcal B}}
\newcommand{\cC}{{\mathcal C}}
\newcommand{\cD}{{\mathcal D}}
\newcommand{\cF}{{\mathcal F}}
\newcommand{\cH}{{\mathcal H}}
\newcommand{\cU}{{\mathcal U}}
\newcommand{\cO}{{\mathcal O}}

\newcommand{\supp}{{\mbox{supp}}}

\newcommand{\card}{\mbox{card}}
\newcommand{\dd}{\mbox{d}}

\newcommand{\eps}{\varepsilon}
\newcommand{\cM}{{\mathcal M}}

\newcommand{\SAP}{\mathcal{S}\hspace*{-2pt}\mathcal{AP}}

\newcommand{\Bap}{\mathcal{B}\hspace*{-1pt}{\mathsf{ap}}}

\newcommand{\Cu}{C_{\mathsf{u}}}
\newcommand{\Cc}{C_{\mathsf{c}}}

\makeatletter
\newcommand{\ostar}{\mathbin{\mathpalette\make@circled\star}}
\newcommand{\make@circled}[2]{%
  \ooalign{$\m@th#1\smallbigcirc{#1}$\cr\hidewidth$\m@th#1#2$\hidewidth\cr}%
}
\newcommand{\smallbigcirc}[1]{%
  \vcenter{\hbox{\scalebox{0.77778}{$\m@th#1\bigcirc$}}}%
}
\makeatother

\begin{document}

\title[Almost Periodicity]{Abstract almost periodicity for group actions on uniform topological spaces.}

\author{Daniel Lenz}
\address{Mathematisches Institut, Friedrich Schiller Universit\"at Jena, 07743 Jena, Germany}
\email{daniel.lenz@uni-jena.de}
\urladdr{http://www.analysis-lenz.uni-jena.de}

\author{Timo Spindeler}
\address{Fakult\"at f\"ur Mathematik,
         Universit\"at Bielefeld, \newline
\hspace*{\parindent}Postfach 100131, 33501 Bielefeld, Germany}
\email{tspindel@math.uni-bielefeld.de}

\author{Nicolae Strungaru}
\address{Department of Mathematical Sciences, MacEwan University \\
10700 -- 104 Avenue, Edmonton, AB, T5J 4S2, Canada\\
and \\
Institute of Mathematics ``Simon Stoilow''\\
Bucharest, Romania}
\email{strungarun@macewan.ca}
\urladdr{http://academic.macewan.ca/strungarun/}

\begin{abstract}
We present  a unified theory for the almost periodicity of
functions with values in an arbitrary  Banach space, measures and
distributions via almost periodic elements for the  action of a
locally compact  abelian group on a uniform topological space. We
discuss the relation between Bohr and Bochner type almost
periodicity, and similar conditions, and how the equivalence among
such conditions relates to properties of the group action and the
uniformity. We complete the paper by demonstrating how various
examples considered earlier all fit in our framework.
\end{abstract}

\keywords{}

\subjclass[2010]{43A60}

\maketitle

\section{Introduction}

Almost periodicity plays an important role in many areas of
mathematics, from differential equations to aperiodic order. Let us
recall that $t$ is a period for a system if translating by $t$ the
entire system we obtain an identical copy of the original system. We
say that a system is fully periodic if the set of periods is
relatively dense.

As introduced by Bohr \cite{Boh}, the idea behind almost
periodicity is to replace the periods by almost periods. These are 
elements $t$ such that after translation the  system ``almost" agrees 
(with respect to a topology) with the original system. An element is
called almost periodic if the sets of almost periods are relatively
dense. Bohr's original definition was for the uniform convergence
topology on the space of uniformly continuous bounded functions
on the real line, and in this case the definition is
equivalent to the Bochner condition that the closed hull of the
function is a compact space in this topology. Many of these ideas
have been extended to other topologies on  spaces of functions on
various (or all) locally compact abelian groups by Stepanov
\cite{Ste}, Weyl \cite{Wey}, Besicovitch \cite{Bes}, to measures
\cite{bm,Gouere-1,ARMA,LSS} and even to distributions \cite{ST}.

Almost periodicity plays a fundamental role in the area of
Aperiodic Order due its connection to pure point diffraction. This
connection already appears (more or less explicit) in the work of
Meyer \cite{Mey}, Lagarias \cite{LAG} and Solomyak \cite{SOL,SOL1}.
A first study giving  an explicit equivalence statement  between
pure point spectrum and almost periodicity appears (in a specific
situation) in  \cite{bm} (see \cite{ARMA} as well for related work).
This was then extended and further studied in the framework of
dynamical systems in  \cite{Gouere-1,Gouere-2,MS,LS,NS11}. On a
fundamental level the equivalence was then established and studied
in an framework free of dynamical systems in  \cite{LSS}. It seems
fair to say that by now  the connection between pure point spectrum
and almost periodicity, as well as the importance of almost
periodicity to Aperiodic Order is well-established and thoroughly
understood.

\medskip

In many situations, the Bohr and Bochner definition of almost
periodicity are equivalent (see for example
\cite{Gouere-1,ARMA,MS,LS2} just to name a few). Moreover, the hull
of an almost periodic function/measure is often a compact Abelian
group \cite{LR,LS,LLRSS,MS}. Since the proofs in these related but
different situations are similar, it is natural to ask if there may
be any unified theory of almost periodicity which shows the
equivalence between Bohr and Bochner definition in a very general
situation. It is our goal here to answer this question.

Let us describe here our general approach. Both the Bohr and Bochner
definition for almost periodicity are in terms of properties of the
orbit of an element under the translation action of the group, and
can be defined for an arbitrary group action on a nice topological
space. While in most situations studied in literature the
topology is metrizable, this is neither the case in all situations
nor necessary. Here, we deal with the  more general case of group
actions on uniform spaces, that is where the topology is defined by
a uniformity. We should emphasize here that many of the results in
the paper can probably be extended to group actions on arbitrary
topological spaces. The uniformity structure seems to play an
important role when studying Bochner type almost periodicity, as in
this case pre-compactness is equivalent to total boundedness, and
this is the reason why the set up is of uniform spaces.

Given the action $\alpha$ of a locally compact Abelian group (LCAG) $G$ on a uniform space $X$, we study the connection between the following three properties of an element $x \in X$:
\begin{itemize}
  \item{} (Bohr type almost periodicity) For each entourage $U \in \cU$ the set $P_{U}(x):= \{ t \in G: (x, \alpha(t,x)) \in U \}$ of almost periods is relatively dense.
  \item{} (Bochner type almost periodicity) The orbit closure $\overline{ \{ \alpha(t,x): t \in G \}}$ is compact in $X$.
  \item{} (pseudo-Bochner type almost periodicity) For each entourage $U \in \cU$ the set $P_{U}(x)$ is finitely relatively dense.
\end{itemize}
Under the extra natural assumption that the uniformity is $G$-invariant (see Definition~\ref{def:G-inv} below), we show in Lemma~\ref{lem:AP-TB} that pseudo-Bochner type almost periodicity is simply the total boundedness of the orbit $\{ \alpha(t,x) : t \in G \}$. Therefore, for a $G$-invariant uniformity, Bochner type almost periodicity implies pseudo-Bochner type almost periodicity, and the two concepts are equivalent if the uniformity is also complete (Proposition~\ref{prop3}).
It is immediate from the definition that pseudo-Bochner type almost periodicity implies Bohr type almost periodicity. If the group action is equicontinuous (see Definition~\ref{def:equi}), we show in Proposition~\ref{prop-2} that the converse also holds and hence Bohr and pseudo-Bochner type almost periodicity are equivalent. Combining the results, and using the fact that for $G$-invariant actions, equicontinuity on orbit closure is equivalent to continuity at $0$ (see Lemma~\ref{lem:7}), we get in Theorem~\ref{ap-equiv} that for continuous, $G$-invariant actions on uniform spaces, Bohr, Bochner and pseudo-Bochner type almost periodicity are equivalent. Moreover, in this case, the orbit closure becomes a compact Abelian group (Theorem~\ref{thm:apequiv}).

We complete the paper by looking at some particular examples in Section~\ref{sect-exa}.

\section{Uniform topologies}
In this section, we discuss the necessary background on uniform
topologies needed for our considerations. The material is
well-known (see for example \cite[Ch.~2]{BOU} or \cite{Mel}). Let us start with the definition of uniformity.

\begin{definition}[Uniformity]
Let $X$ be a set. A nonempty set $\cU$ consisting of subsets of $X
\times X$ is called a \textbf{uniformity on $X$} if it satisfies the
following conditions.
\begin{itemize}
\item The diagonal
\[
\Delta := \{ (x,x)\, :\, x \in X \} \subseteq U
\]
of $X \times X$ is contained in any  $U \in \mathcal U$.
\item The set $\cU$ is closed upwards in the sense that if $U $ belongs to
$\cU$ and $U \subseteq V$ hold, then $V $ belongs to $\cU$ as well.
\item If $U,V$ belong to $\cU$, then $U \cap V$ belongs to $ \cU$.
\item For all $U \in \cU$ there exists a $V \in \cU$ such that
\[
V \circ V := \{ (x,y) \in X \times X\, :\, \mbox{ there exists }  z \in X
\mbox{ with  } (x,y), (y,z) \in V \}
\]
is contained in $U$.
\item For all $U \in \cU$ the set
\[
U^{-1}:= \{ (x,y)\, :\, (y,x) \in U \}
\]
belongs to $\cU$.

\end{itemize}
The elements $U \in \cU$ are called \textbf{entourages}.
\end{definition}

Any uniformity $\cU$ on $X$ induces a topology $\tau_{\cU}$ on $X$
as follows: For $x\in X$ and $U\in \cU$ we define
\[
U[x]:= \{ y \in X\, :\, (x,y) \in U \} \,.
\]
Then, the sets $\{ U[x] \,:\, U \in \cU \}$
define a basis at $x$ for the topology $\tau_{\cU} $. Specifically,
 a subset $O$ belongs to $\tau_{\cU}$ if and only if for any
$x\in O$ there exists  $U \in \cU$ with $U[x] \subseteq O$. We  say
that a topology is \textbf{uniform} if it is the topology induced by
a uniformity.

\smallskip

Any  metrisable topology is uniform. Indeed, if $(X,d)$ is a metric
space, for each $r>0$ we can define
\[
U_r:= \{ (x,y) \in X \times X\, :\, d(x,y) <r \} \,.
\]
Then,
\[
\cU_{d}:= \{ U \subseteq X \times X\, :\, \mbox{ there exists } r >0
\mbox{ such that } U_r \subseteq U \} \,,
\]
is a uniformity on $X$, which induces the topology defined by the
metric $d$. As it is instructive and may help the reader to get some
perspective on the definition of entourages we briefly sketch the
proof that $\cU_{d}$ is a uniformity: As $d(x,x) =0$ for all $x$, the
system $\cU_{d}$ contains the diagonal. The system $\cU_{d}$ is
upward closed by definition. The third point holds trivially. The
fourth point is a consequence of the triangle inequality and the last
point is a consequence of the symmetry of the metric.

\begin{remark}
In the preceding considerations we do not need $d$ to be a metric. It suffices that $d$ is a pseudometric, i.e. a
symmetric map $d : X\times X \longrightarrow [0,\infty]$ with
$d(x,x) =0$ for all $x\in X$ and
\[
d(x,y) \leq d(x,z) + d(z,y) \,,
\]
where the value $\infty$ is allowed.
\end{remark}

Let $\cU$ be a uniformity. Then, $\tau_{\cU}$ is Hausdorff if and
only if for all $x,y \in X$ with $x \neq y$ there exists some $U \in
\cU$ such that $(x,y) \notin U$.  Equivalently, $\tau_{\cU}$ is Hausdorff if and only if
\[
\bigcap_{U \in \cU} U =\Delta \,.
\]
All the uniform topologies we consider are assumed to be Hausdorff.

\medskip
We next discuss the completion of the topology induced by a uniformity. Let us start by recalling the definition of a filter.
\begin{definition} A \textbf{filter} on a set $X$ is a family $\cF \subseteq \mathcal{P}(X)$ of subsets of $X$ with the following properties:
\begin{itemize}
  \item{} If $A \in \cF$ and $A \subseteq B$ then $B \in \cF$.
  \item{} If $n \in \N$ and $A_1, \ldots, A_n \in \cF$ then $A_1 \cap \ldots \cap A_n \in \cF$.
  \item{} $\varnothing \notin \cF$.
\end{itemize}

If $\cU$ is a uniformity on $X$, a filter $\cF$ is called \textbf{Cauchy} if for all $U \in \cU$ there exists some $A \in \cF$ such that $A \times A \subseteq U$.

A uniform space $(X, \cU)$ is called \textbf{complete}  if every Cauchy filter is convergent.
\end{definition}

\medskip

Let us briefly discuss how to characterize uniform Hausdorff topologies which are given by a metric. First, we need the following definition.

\begin{definition}[Basis of an entourage]
Let $\cU$ be a uniformity on $X$. A set $\cB \subseteq \cU$ is
called a \textbf{basis of entourages} if for all $U \in \cU$ there
exists some $W \in \cB$ such that $W \subseteq U$.
\end{definition}

The following well-known result (see e.g. \cite{Weil}) characterizes the metrizability of a uniformity.

\begin{theorem}[Characterization metrizability of entourage]\label{thm-char-metrizability}
Let $\cU$ be a Hausdorff uniformity on $X$. Then, there exist a metric $d$ on $X$
such that $\cU=\cU_d$ if and only if there exists a countable basis
of entourages $\cB \subseteq \cU$. \qed
\end{theorem}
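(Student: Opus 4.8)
The plan is to prove the two implications separately, the forward direction being routine and the reverse being the substance of the classical Weil metrization theorem. For the forward direction, suppose $\cU = \cU_d$ for some metric $d$. Then the sets $U_{1/n} = \{(x,y) : d(x,y) < 1/n\}$, $n \in \N$, all lie in $\cU$, and given any $U \in \cU$ there is $r>0$ with $U_r \subseteq U$; choosing $n$ with $1/n < r$ yields $U_{1/n} \subseteq U$. Hence $\{U_{1/n} : n \in \N\}$ is a countable basis of entourages. For the reverse direction, starting from a countable basis $\cB = \{B_n : n \in \N\}$, I would first replace it by a more convenient one: passing to finite intersections and intersecting with inverses, I may assume the $B_n$ are symmetric and decreasing. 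Then, applying the composition axiom repeatedly, I would construct inductively a decreasing sequence of symmetric entourages $(V_n)_{n \ge 0}$ with $V_0 = X \times X$, with each $V_n$ contained in $B_n$, and crucially satisfying the triple-composition bound
\[
V_{n+1} \circ V_{n+1} \circ V_{n+1} \subseteq V_n
\qquad (n \ge 0).
\]
Since each $V_n$ sits below $B_n$ while each $B_n$ contains some $V_m$, the family $(V_n)$ is again a basis of entourages for $\cU$.

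Next I would introduce an auxiliary \emph{gauge} $f : X \times X \to [0,\infty)$ by setting $f(x,y) = 2^{-n}$ when $(x,y) \in V_n \setminus V_{n+1}$ and $f(x,y) = 0$ when $(x,y) \in \bigcap_n V_n$. Symmetry of the $V_n$ makes $f$ symmetric, and $\Delta \subseteq V_n$ for all $n$ gives $f(x,x) = 0$. This $f$ need not obey the triangle inequality, so I would turn it into a genuine pseudometric by chaining,
\[
d(x,y) = \inf \Bigl\{ \sum_{i=0}^{k-1} f(z_i, z_{i+1}) : k \in \N,\ z_0 = x,\ z_k = y \Bigr\},
\]
the infimum being over all finite chains joining $x$ to $y$. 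That $d$ is symmetric, vanishes on the diagonal, and satisfies the triangle inequality is immediate from concatenation of chains.

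The heart of the argument, and the step I expect to be the main obstacle, is the two-sided comparison $\tfrac{1}{2} f(x,y) \le d(x,y) \le f(x,y)$. The upper bound follows at once from the one-step chain $z_0 = x, z_1 = y$. The lower bound is the delicate Frink chaining estimate, which I would establish by induction on the length $k$ of a chain, proving that $f(x,y) \le 2 \sum_{i} f(z_i, z_{i+1})$ for every chain $z_0, \dots, z_k$ from $x$ to $y$. The inductive step splits the chain at the largest index $j$ for which the initial partial sum does not exceed half the total, applies the inductive hypothesis to the two subchains together with the single middle step, and then invokes precisely the triple-composition property $V_{n+1} \circ V_{n+1} \circ V_{n+1} \subseteq V_n$ to control membership of $(x,y)$ in the $V_n$ and thereby bound $f(x,y)$. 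Tracking which $V_n$ the endpoints land in is the technical crux.

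With the comparison $\tfrac12 f \le d \le f$ in hand, the remaining steps are formal. The inequalities make the $d$-balls and the sets $V_n$ cofinal in each other: from $d \le f$ one gets $V_{n+1} \subseteq \{f \le 2^{-(n+1)}\} \subseteq \{d \le 2^{-(n+1)}\}$, and from $f \le 2d$ one gets $\{d < 2^{-(n+1)}\} \subseteq \{f < 2^{-n}\} \subseteq V_n$. Hence $\cU_d$ and $\cU$ share the basis $(V_n)$, so $\cU_d = \cU$. Finally, because $\cU$ is Hausdorff we have $\bigcap_n V_n = \Delta$, whence $d(x,y) = 0$ forces $(x,y) \in \bigcap_n V_n = \Delta$, that is $x = y$; thus the pseudometric $d$ is in fact a metric, which completes the argument.
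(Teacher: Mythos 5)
Your proposal is correct, but there is nothing in the paper to compare it against: the paper states this theorem as a known result, closes it with a \qed, and outsources the proof entirely to the literature (Weil's monograph). What you have written is precisely the classical Weil--Frink metrization argument that the citation points to. The forward direction is handled exactly as one would expect. For the converse, your reduction to a symmetric decreasing basis $(V_n)$ with $V_0 = X\times X$ and $V_{n+1}\circ V_{n+1}\circ V_{n+1}\subseteq V_n$, the gauge $f$ taking values in powers of $2$, the chaining pseudometric $d$, and the two-sided comparison $\tfrac12 f\le d\le f$ are the standard skeleton. You correctly identify that the only nontrivial point is the chaining (Frink) lemma $f(x,y)\le 2\sum_i f(z_i,z_{i+1})$, and your description of its proof --- induction on chain length, splitting at the last index where the initial partial sum stays below half the total, applying the inductive hypothesis to the two subchains plus the middle step, and then using the triple-composition inclusion to place $(x,y)$ in the appropriate $V_n$ --- is exactly how that induction goes; the choice $V_0=X\times X$ also quietly supplies the base case $f\le 1$ needed when the total sum exceeds $\tfrac12$. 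The endgame (mutual cofinality of the $d$-balls and the $V_n$, and Hausdorffness forcing $\bigcap_n V_n=\Delta$ so that the pseudometric is a metric, using that $(V_n)$ is a basis) is also right. The one caveat is that the chaining lemma is sketched rather than executed; since that estimate is the entire technical content of the theorem, a fully self-contained write-up would need those inductive details spelled out, but the plan as stated would compile into a correct proof.
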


\smallskip

At the end of this section, let us review the notion of total boundedness and its connection to compactness.

\begin{definition}[Total boundedness]
Let $(X,\cU)$ be a uniform Hausdorff space. A set $A \subseteq X$
is called \textbf{totally bounded} if, for all $U \in \cU$, there
exist  $U_1,U_2,\ldots,U_n \subseteq X$ such that
\[
  A  \subseteq \bigcup_{j=1}^n U_j \qquad \mbox{ and } \qquad \bigcup_{j=1}^n (U_j \times U_j)   \subseteq U  \,.
\]
\end{definition}

The importance of totally boundedness is given by the following result.

\begin{theorem}\label{thm-tot-bound}\cite[Thm.~II.4.3]{BOU}
Let $(X,\cU)$ be a uniform Hausdorff space.
\begin{itemize}
\item[(a)] If $A \subseteq X$ is compact in $\tau_{\cU}$, then $A$ is totally bounded.
\item[(b)] If $A \subseteq X$ is totally bounded and $(X, \cU)$ is complete, then the closure
  $\overline{A}$  of $A$ is compact.
\end{itemize} \qed
\end{theorem}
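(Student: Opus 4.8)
The plan is to treat the two statements separately: part (a) by a direct covering argument using a symmetric ``halving'' entourage, and part (b) through the filter machinery introduced above, reducing compactness of $\overline{A}$ to the convergence of ultrafilters.

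For part (a), I would fix an arbitrary entourage $U \in \cU$ and, using the subdivision and symmetry axioms, choose a symmetric entourage $V$ with $V \circ V \subseteq U$ (for instance $V = W \cap W^{-1}$, where $W \circ W \subseteq U$). Since the sets $V[x]$ form a neighbourhood basis at each point $x$, the family $\{ V[x] : x \in A \}$ covers $A$ by neighbourhoods, so the interiors form an open cover; compactness of $A$ yields finitely many $x_1,\dots,x_n \in A$ with $A \subseteq \bigcup_{j=1}^n V[x_j]$. Setting $U_j := V[x_j]$, symmetry of $V$ shows that any $y,z \in V[x_j]$ satisfy $(y,x_j),(x_j,z) \in V$, hence $(y,z) \in V \circ V \subseteq U$; thus $U_j \times U_j \subseteq U$, which is exactly the defining condition for total boundedness.

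For part (b), the strategy is to prove that every ultrafilter on $\overline{A}$ converges within $\overline{A}$ and invoke the standard characterization of compactness by ultrafilters. The preliminary step is a refinement lemma: given $U$, pick a symmetric $V$ with $V \circ V \circ V \subseteq U$ (iterate the subdivision axiom and intersect with the inverse), apply total boundedness of $A$ to $V$ to obtain a cover $A \subseteq \bigcup_j U_j$ with $U_j \times U_j \subseteq V$, and then verify $\overline{U_j} \times \overline{U_j} \subseteq V \circ V \circ V \subseteq U$. The point is that a member of $\overline{U_j}$ is $V$-close to a member of $U_j$, so three applications of $V$ bridge any pair in $\overline{U_j}\times\overline{U_j}$; since $\overline{A} \subseteq \bigcup_j \overline{U_j}$, this shows $\overline{A}$ is itself totally bounded. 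Now let $\cF$ be an ultrafilter on $\overline{A}$. For each $U$, the finite $U$-small cover $\{\overline{U_j}\cap\overline{A}\}$ forces, by the ultrafilter property, some member into $\cF$, and the square of that member lies in $U$; hence $\cF$ is Cauchy. Completeness of $(X,\cU)$ makes $\cF$ converge to some $x \in X$, and since $\overline{A}$ is closed and lies in $\cF$, the limit satisfies $x \in \overline{A}$. Thus every ultrafilter on $\overline{A}$ converges in $\overline{A}$, so $\overline{A}$ is compact.

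I expect the main obstacle to lie in part (b), concretely in the bookkeeping of the refinement lemma establishing that the closure of a totally bounded set is totally bounded, and in handling the ultrafilter on $\overline{A}$ as a Cauchy filter in $X$ so that completeness can be applied and the limit pinned down inside $\overline{A}$. Part (a) is routine once the symmetric halving entourage is in place.
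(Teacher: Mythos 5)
The paper itself offers no proof of this statement: it is quoted verbatim from \cite[Thm.~II.4.3]{BOU} with the proof omitted, so the only benchmark is that classical reference, and your argument is essentially the standard Bourbaki one. Your proof is correct: the covering argument with a symmetric entourage $V$, $V\circ V\subseteq U$, gives (a) exactly in the form of the paper's definition of total boundedness, and for (b) the chain ``the closure of a totally bounded set is totally bounded, hence every ultrafilter on $\overline{A}$ is Cauchy, hence converges by completeness, and the limit lies in the closed set $\overline{A}$'' is the classical route. The only points to spell out in a full write-up are (i) that $V[y]$ is a neighbourhood of $y$ (so a point of $\overline{U_j}$ really does have a $V$-close point of $U_j$, which is where symmetry of $V$ is used), and (ii) that one passes from the ultrafilter on $\overline{A}$ to the filter it generates on $X$ before invoking completeness of $(X,\cU)$, the limit then belonging to $\overline{A}$ because a limit of a filter lies in the closure of each of its members; your sketch already indicates both, so there is no gap.
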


\begin{remark}[Characterization compactness] From the theorem we may easily derive the following characterization of compactness in a uniform space.
A subset $A\subseteq X$ is compact if and only if $A$ is totally bounded and
complete (in the topology inherited from $\cU$).
\end{remark}

The topology of a compact set is always uniform.

\begin{theorem}\cite[Thm.~II.4.1]{BOU}
Let $(X, \tau)$ be a compact space. Then, there exists a unique
uniformity $\cU$ on $X$ such that $\tau=\tau_{\cU}$.  \qed
\end{theorem}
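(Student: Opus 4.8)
The plan is to pin down the uniformity explicitly, which settles existence and uniqueness at once: the only candidate turns out to be the neighbourhood filter of the diagonal. So I would set
\[
\cU_{\Delta} := \{ U \subseteq X \times X \,:\, U \text{ is a } (\tau\times\tau)\text{-neighbourhood of } \Delta \}.
\]
First I would verify that $\cU_{\Delta}$ is a uniformity with $\tau_{\cU_{\Delta}}=\tau$, and then show that every uniformity $\cV$ with $\tau_{\cV}=\tau$ coincides with $\cU_{\Delta}$; the second step is where uniqueness lives, while the first re-proves existence.

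For the axioms, containment of $\Delta$, upward closure and stability under finite intersections are immediate from the definition of a neighbourhood filter, and symmetry holds because the flip $(x,y)\mapsto(y,x)$ is a homeomorphism of $X\times X$ fixing $\Delta$, so $U^{-1}\in\cU_{\Delta}$ whenever $U\in\cU_{\Delta}$. The one substantial axiom is, for each $U\in\cU_{\Delta}$, the existence of $V\in\cU_{\Delta}$ with $V\circ V\subseteq U$, and this is where compactness enters: for each $x$ pick an open $O_x\ni x$ with $O_x\times O_x\subseteq U$, extract a finite subcover $O_{x_1},\dots,O_{x_n}$ of $X$, and take a star refinement $\mathcal{B}$ of $\{O_{x_i}\}$ (available since a compact Hausdorff space is paracompact, hence fully normal); then $V:=\bigcup_{B\in\mathcal{B}}(B\times B)$ is a neighbourhood of $\Delta$ with $V\circ V\subseteq U$. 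That $\tau_{\cU_{\Delta}}=\tau$ is then easy and uses no compactness: if $O$ is $\tau$-open with $x\in O$, then $U:=(O\times O)\cup((X\setminus\{x\})\times X)$ is an open neighbourhood of $\Delta$ with $U[x]=O$, while conversely every slice $U[x]$ of a neighbourhood of $\Delta$ is a $\tau$-neighbourhood of $x$; hence the slices exhaust the $\tau$-neighbourhoods of $x$. (Existence of some uniformity inducing $\tau$ can alternatively be obtained more cheaply from complete regularity, via the uniformity generated by the pseudometrics $d_f(x,y)=|f(x)-f(y)|$, $f\in C(X)$.)

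Now let $\cV$ be any uniformity with $\tau_{\cV}=\tau$. The inclusion $\cV\subseteq\cU_{\Delta}$ holds in every uniform space: given $V\in\cV$, choose a symmetric $W\in\cV$ with $W\circ W\subseteq V$; then for each $x$ the set $W[x]\times W[x]$ is a $\tau$-neighbourhood of $(x,x)$ contained in $V$, so $V$ is a neighbourhood of $\Delta$. For the reverse inclusion $\cU_{\Delta}\subseteq\cV$ I would use the standard fact that in a Hausdorff uniform space $\bigcap_{V\in\cV}\overline{V}=\Delta$ (from $\overline{W}\subseteq W\circ W\circ W$ for symmetric $W$ together with the Hausdorff condition $\bigcap_{V}V=\Delta$). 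Given a neighbourhood $N$ of $\Delta$, its complement $N^{c}$ is closed in the compact space $X\times X$, hence compact, and is covered by the open sets $(\overline{V})^{c}$, $V\in\cV$, since $\bigcap_{V}\overline{V}=\Delta\subseteq N$. A finite subcover $N^{c}\subseteq\bigcup_{i=1}^{k}(\overline{V_i})^{c}$ and $V_0:=\bigcap_{i=1}^{k}V_i\in\cV$ then give $\overline{V_0}\cap N^{c}=\varnothing$, i.e. $V_0\subseteq N$, so $N\in\cV$. Thus $\cV=\cU_{\Delta}$, which is the asserted uniqueness.

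The main obstacle is the composition axiom $V\circ V\subseteq U$ in the verification that $\cU_{\Delta}$ is a uniformity: this is exactly the property that fails for the diagonal-neighbourhood filter on a non-compact space, so the compactness of $X$ must be invested there (through full normality and a star refinement, equivalently a Lebesgue-number argument for finite covers). Compactness is needed once more, in a structurally identical finite-subcover step, for the inclusion $\cU_{\Delta}\subseteq\cV$; in both places it is what upgrades a pointwise statement on $X\times X$ to a single uniform entourage. Hausdorffness, by contrast, enters only to make points closed (so that $\tau$ is recovered through the slices) and to guarantee $\bigcap_{V}\overline{V}=\Delta$.
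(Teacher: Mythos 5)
Your proof is correct. There is nothing in the paper to compare it against: the statement is quoted from Bourbaki \cite[Thm.~II.4.1]{BOU} and marked as proved elsewhere, with no argument given in the text. What you have written is essentially Bourbaki's own proof: the unique uniformity compatible with a compact (Hausdorff) topology is the filter $\cU_{\Delta}$ of neighbourhoods of the diagonal, with compactness invested exactly where you say --- once in the composition axiom (via a star refinement or Lebesgue-number argument for a finite cover) to get existence, and once in the finite-subcover argument with closed entourages, using $\bigcap_{V\in\cV}\overline{V}=\Delta$, to get $\cU_{\Delta}\subseteq\cV$ and hence uniqueness. Two small repairs are worth making. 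First, in the uniqueness half, the complement $N^{c}$ of a neighbourhood $N$ of $\Delta$ need not be closed unless $N$ is open; pass to the interior of $N$ (which still contains $\Delta$), run your compactness argument there to get some $V_0\in\cV$ inside it, and then conclude $N\in\cV$ by upward closure of $\cV$. Second, Hausdorffness is genuinely used (points closed in your slice argument showing $\tau_{\cU_{\Delta}}=\tau$, and the identity $\bigcap_{V}V=\Delta$ behind $\bigcap_{V}\overline{V}=\Delta$); this is consistent with the paper, whose standing convention is that all uniform topologies are Hausdorff, and with Bourbaki, for whom ``compact'' includes Hausdorff, but it deserves an explicit mention since the theorem as stated says only ``compact space''.
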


\section{Group actions on uniform topological spaces}
In this section, we consider groups acting on uniform spaces. The
material is certainly known.

Let $(X, \cU)$ be a uniform Hausdorff space, and let $\tau_{\cU}$ be
the topology induced by $\cU$ on $X$. Let $G$ be a group. Then, a
map  $\alpha : G \times X \to X$ is called a \textbf{group action}
if it satisfies:
\begin{itemize}
\item $\alpha(e,x) = x$ for all $x\in X$ (where $e$ is the neutral
element of $G$).
\item $\alpha(s,\alpha(t,x)) = \alpha(st,x)$ for all $x\in X$ and
$t,s\in G$.
\end{itemize}
If $\alpha : G\times X\longrightarrow X$ is a group action, the
group $G$ is said to act on $X$  (via $\alpha$).

Next, we review and introduce various definitions for the action
$\alpha$, which will play a central role in this paper. Note that we
do not at first assume any continuity property of $\alpha$. Indeed,
we have not assumed that $G$ carries a topology so far.

\medskip

We start by introducing the notion of $G$-invariance for
uniformities and (pseudo) $G$-invariance for metrics on $X$.

\begin{definition}\label{def:G-inv}
Let $\alpha$ be an action of $G$ on $X$.
\begin{itemize}
\item[(a)]
A uniformity $\cU$ on $X$ is called \textbf{$G$-invariant} if for
each  $U \in \cU$ there exists a $U' \in \cU$ such that, for all $t \in G$, we have
\[
\alpha(t,U'):= \{ (\alpha(t,x), \alpha(t,y)) :  (x,y) \in U' \}
\subseteq U \,
\]

  \item[(b)] A metric $d$ on $X$ is called \textbf{$G$-invariant} if, for all $x,y \in X$ and $t \in G$, we have
\[
d(\alpha(t,x), \alpha(t,y))=d(x,y) \,.
\]
\item[(c)] A metric $d$ on $X$ is called \textbf{pseudo $G$-invariant} if for each $\eps >0$ there exists a $\delta>0$ such that, for all $x,y \in X$ with $d(x,y)< \delta$ and all $t \in G$, we have
\[
d(\alpha(t,x), \alpha(t,y))< \eps \,.
\]
\end{itemize}
\end{definition}

It is obvious that any $G$-invariant metric is pseudo $G$-invariant.
It is also clear from  the definition in (a) that $U'$ itself is
contained in $U$ (as we can take $t$ to be the neutral element of
$G$). This will be used tacitly subsequently.

\smallskip

Note that another way to phrase (c) of the preceding definition is
that the family $\alpha_t$, $t\in G$, (with $\alpha_t (x) =
\alpha(t,x)$) is equicontinuous. By changing the metric to an
equivalent metric, we can then make $\alpha_t$ into a families of
isometries. This is discussed next.

\begin{lemma}[Invariance and pseudo invariance]\label{metris}
Let $\alpha$ be an action of $G$ on a metric space $(X,d)$, and let $\cU_{d}$ be the uniformity on $X$ defined by $d$.
Then, the following statements are equivalent.
\begin{itemize}
\item[(i)] The uniformity $\cU_{d}$ is $G$-invariant.
\item[(ii)] The metric $d$ is pseudo $G$-invariant.
\item[(iii)] There exists a $G$-invariant metric $d'$ on $X$
with $\cU_{d} = \cU_{d'}$.
\end{itemize}
\end{lemma}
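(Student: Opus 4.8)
The plan is to establish the chain (iii) $\Rightarrow$ (i) $\Leftrightarrow$ (ii) $\Rightarrow$ (iii), where essentially all of the content sits in the final implication (ii) $\Rightarrow$ (iii); the equivalence (i) $\Leftrightarrow$ (ii) is a mere dictionary between the entourage formulation and the $\eps$--$\delta$ formulation, and (iii) $\Rightarrow$ (i) is almost immediate. Throughout I would use that $\{U_r : r>0\}$ is a basis of entourages for $\cU_{d}$.

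For (i) $\Leftrightarrow$ (ii) I would simply unwind the definitions. To get (ii) $\Rightarrow$ (i), given $U \in \cU_{d}$ choose $r$ with $U_r \subseteq U$, apply pseudo $G$-invariance with $\eps = r$ to obtain $\delta$, and verify that $U' := U_\delta$ satisfies $\alpha(t,U') \subseteq U_r \subseteq U$ for every $t$. Conversely, for (i) $\Rightarrow$ (ii), given $\eps>0$ apply $G$-invariance to $U := U_\eps$ to produce $U'$, then pick $\delta$ with $U_\delta \subseteq U'$; this $\delta$ witnesses pseudo $G$-invariance. For (iii) $\Rightarrow$ (i), note that if $d'$ is $G$-invariant then $\alpha(t,U_r^{d'}) = U_r^{d'}$ for all $r$ (distances are preserved exactly), so $\cU_{d'}$ is trivially $G$-invariant (take $U'=U$); since $\cU_{d}=\cU_{d'}$, property (i) follows.

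The heart of the argument is (ii) $\Rightarrow$ (iii), i.e. the construction of an equivalent invariant metric. Since passing from $d$ to $\min(d,1)$ changes neither the uniformity $\cU_{d}$ nor the validity of (ii), I may assume $d \leq 1$. I then set
\[
 d'(x,y) := \sup_{t \in G} d\bigl(\alpha(t,x), \alpha(t,y)\bigr),
\]
which is finite since it is bounded by $1$. I would check that $d'$ is a metric: symmetry and the triangle inequality pass to the supremum, and $d'(x,y)=0$ forces $d(x,y)=0$ (take $t=e$), hence $x=y$. Invariance is the computation $d'(\alpha(s,x),\alpha(s,y)) = \sup_{t} d(\alpha(ts,x),\alpha(ts,y)) = d'(x,y)$, using that right multiplication $t \mapsto ts$ permutes $G$. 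Finally $\cU_{d}=\cU_{d'}$: from $d \leq d'$ (again $t=e$) one gets $U_r^{d'} \subseteq U_r^{d}$ and thus $\cU_{d} \subseteq \cU_{d'}$; and pseudo $G$-invariance furnishes, for each $r$, a $\delta$ with $d(x,y)<\delta \Rightarrow \sup_t d(\alpha(t,x),\alpha(t,y)) \leq r$, that is $U_\delta^{d} \subseteq U_r^{d'}$, giving the reverse inclusion.

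The step requiring genuine care is this last implication, and within it two points: the finiteness of the supremum and the coincidence of the two uniformities. The reduction to a bounded $d$ disposes of the first, while pseudo $G$-invariance is precisely the hypothesis that traps the $d'$-balls inside $d$-balls and so delivers the second. Everything else is a routine translation of the definitions, so I expect no further obstacles.
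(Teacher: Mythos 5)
Your proposal is correct and follows essentially the same route as the paper: both directions (i) $\Leftrightarrow$ (ii) are the same unwinding of definitions, and (ii) $\Rightarrow$ (iii) is the same construction $d'(x,y)=\sup_{t\in G} d(\alpha(t,x),\alpha(t,y))$ with a truncation at $1$ to handle possible infiniteness, followed by the same two-inclusion comparison of uniformities. The only difference is cosmetic but pleasant: you truncate the metric at $1$ \emph{before} taking the supremum while the paper truncates \emph{after}; since $\sup_{t}\min\bigl(d(\alpha(t,x),\alpha(t,y)),1\bigr)=\min\bigl(\sup_{t}d(\alpha(t,x),\alpha(t,y)),1\bigr)$ the resulting metric is literally the same, but your ordering makes the triangle inequality for $d'$ immediate and avoids the paper's three-case verification.
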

\begin{proof}
(i)$\implies$(ii): Let $\eps >0$. Since $U_\eps := \{
(x,y) \,:\, d(x,y) < \eps \} \in \cU$ and $\cU$ is $G$ invariant, there
exists some $V \in \cU$ such that, for all $t \in G$ and $(x,y) \in
V$, we have $(\alpha(t,x), \alpha(t,y)) \in U_\eps$.

Now, since $V \in \cU$, there exists some $\delta>0$ such that
$d(x,y) < \delta$ implies $(x,y) \in V$.
Therefore, we have $(x,y) \in V$ for all $x,y \in X$ with $d(x,y) <\delta$ and all $t \in G$, and hence $(\alpha(t,x), \alpha(t,y)) \in
U_\eps$. This implies
\[
d(\alpha(t,x), \alpha(t,y)) <\eps \,.
\]

\medskip

\noindent (ii)$\implies$(iii): Define
\[
\bar{d}(x,y):=\sup\{ d(\alpha(t,x),\alpha(t,y)) \,:\, t \in G \} \,.
\]
It follows immediately from the definition that we have
\begin{equation}\label{eq1}
\bar{d}(x,y)=\bar{d}(\alpha(t,x),\alpha(t,y))
\end{equation}
for all $x,y \in X$ and $t \in G$. Now, $\bar{d}$ may not be a metric. Indeed, since $X$ is not necessarily compact, $d$ could be unbounded, i.e there could exist some $x,y \in X$ such that $\bar{d}(x,y)=\infty$. To fix this issue, define
\[
d'(x,y) :=
\begin{cases}
\bar{d}(x,y), & \mbox{ if } \bar{d}(x,y) <1 \,,\\
  1, &\mbox{ otherwise }.
\end{cases}
\]
We claim that this is a metric which has the desired properties.

\begin{itemize}
\item{} We show that $d'$ is a metric.

First note that $d'(x,y) \geq 0$ for all $x,y \in G$ by construction.
Moreover, $d'(x,y)=0 \iff \bar{d}(x,y)=0 \iff
d(\alpha(t,x), \alpha(t,y))=0 \mbox{ for all } t \in G
\iff x=y$.

Next, the definition of $\bar{d}$ gives that $\bar{d}(x,y)=\bar{d}(y,x)$ for all $x,y$ and hence $d'(x,y)=d'(y,x)$.

Finally, let us prove the triangle inequality. Let $x,y, z \in X$. We split the problem into three cases:

{\it Case 1:} Assume $d'(x,z) \geq 1$. Then, one finds
\[
d'(x,y) \leq 1 \leq d'(x,z) \leq d'(x,z)+d'(y,z) \,.
\]

{\it Case 2:} Assume $d'(y,z) \geq 1$. Then, we get
\[
d'(x,y) \leq 1 \leq d'(y,z) \leq d'(x,z)+d'(y,z) \,.
\]

{\it Case 3:} Assume $d'(x,z) < 1$ and $d'(y,z) <1$. Then, $d'(x,z)=\bar{d}(x,z)$ and  $d'(y,z)=\bar{d}(y,z)$ imply
\begin{align*}
d'(x,y)
    &\leq \bar{d}(x,y)= \sup\{ d(\alpha(t,x),\alpha(t,y))\, :\, t \in G \}   \\
    &\leq \sup\{ d(\alpha(t,x),\alpha(t,z)) +d(\alpha(t,y),\alpha(t,z))\,:\,
       t \in G \} \\
    &\leq  \sup\{ d(\alpha(t,x),\alpha(t,z))\, :\, t \in G \}
      +\sup\{d(\alpha(t,y),\alpha(t,z))\, :\, t \in G \}\\
    &= \bar{d}(x,z)+\bar{d}(y,z) = d'(x,z)+d'(y,z) \,.
\end{align*}
This proves that $d'$ is a metric.

\smallskip

\item{} $d'$ is $G$-invariant.

This follows immediately from \eqref{eq1} and the definition of $d'$.

\item{} $d$ and $d'$ induce the same uniformity.

First note that we have
\[
d'(x,y) <r \implies
\bar{d}(x,y) <r \implies d(x,y) <r
\]
for all $0<r<1$. Hence,
\[
\{(x,y) \in X\times X \,:\, d'(x,y)<r\}\subseteq \{(x,y)\in X\times X\, :\,
d(x,y) <r\}
\]
for all $0< r <$. This gives, $\cU_{d} \subseteq \cU_{d'}$.

\smallskip

Next, let $\varepsilon >0$ be given. Assume without loss of
generality that $\varepsilon <1$. Since $d$ is pseudo-invariant,
there exists some $\delta
>0$ such that $d(\alpha(t,y),\alpha (t,z)) <\varepsilon$ for all $y,z \in  X$ with $d(y,z) <\delta$ and all $t
\in G$. In
particular, we have
\[
d'(x,y) \leq \varepsilon
\]
whenever $d(x,y)<\delta$. This gives
\[
\{(x,y) \in X\times X \,:\, d(x,y) <\delta\} \subseteq \{(x,y)\in
X\times X\, :\, d'(x,y) \leq \varepsilon\} \,.
\]
This gives $\cU_{d'}\subseteq \cU_{d}$, and hence $\cU_{d} = \cU_{d'}$.
\end{itemize}

\medskip

\noindent (iii)$\implies$(i): Since $\cU_{d}$ agrees with $\cU_{d'}$ and
$d'$ is $G$-invariant, it follows immediately that $\cU_{d}$ is $G$-invariant.
\end{proof}

Next, consider a uniform Hausdorff space $(X,\cU)$, and let $\tau_{\cU}$
be the topology defined by $\cU$. Let $G$ be a locally compact
Abelian group (LCAG). We then write the group operation as addition
with $+$, and denote the neutral element of $G$ as $0$ and the
inverse of $t\in G$ by $-t$.

Let $\alpha$ be an action of  $G$ on $X$. Recall that $\alpha$ is
called continuous if
\[
\alpha : G \times X \to X
\]
is a continuous mapping with respect to the topologies of $G$ and $(X ,\tau_{\cU})$.

We want to look at a stronger version of continuity for $\alpha$. To do so, we
start with the following lemma.

\begin{lemma}\label{lem6}
Let $(X,\cU)$ be a Hausdorff uniform space
and $\alpha : G\times X\longrightarrow X$ the continuous action of
an LCAG.  The following assertions are equivalent.
\begin{itemize}
\item[(i)] For all $U \in \cU$ there exists an open set $O\subseteq G$ containing
$0$ and a set $V \in \cU$ such that
\[
\{ (\alpha(s,x), \alpha (t,y))\, :\, s,t \in O ,\, (x,y) \in V \} \subseteq U \,.
\]
\item[(ii)] For all $U \in \cU$ there exists an open set $O\subseteq G$ containing $0$ and a set $V \in \cU$ such that
\[
\{ (\alpha(s,x), y)\, :\, s \in O ,\, (x,y) \in V \} \subseteq U \,.
\]
  \item[(iii)] For all $U \in \cU$ there exists an open set $O\subseteq G$ containing $0$ such that
\[
\{ (\alpha(s,x),x) \,:\, s \in O ,\, x \in X \} \subseteq U \,.
\]
\item[(iv)] The family of function $\{ \alpha_x \}_{x \in X }$ defined by $\alpha_x : G \to X$
\[
\alpha_x (t)= \alpha(t,x)
\]
is equicontinuous at $t=0$ (i.e. for every $U\in \cU$ there exists
an open set $O\subseteq G$ containing $0$ with $(\alpha_x (s),x) =
(\alpha_x(s),\alpha_x(0))\in U$ for all $s\in O$).
\item[(v)] The family of function $\{ \alpha_x \}_{x \in X }$ is uniformly  equicontinuous (i.e. for every $U\in \cU$ there exists
an open set $O\subseteq G$ containing $0$ with $(\alpha_x(s),\alpha_x(t))\in
U$ for all $s,t\in G$ with $s-t\in O$).
\end{itemize}
\end{lemma}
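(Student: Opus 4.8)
The plan is to prove the cycle (i)$\Rightarrow$(ii)$\Rightarrow$(iii)$\Rightarrow$(i), and then to identify (iii) with (iv) and (iv) with (v); all of these implications turn out to be formal consequences of the uniformity axioms and the group law, so that the standing continuity hypothesis plays no role in the argument. The two downward implications are mere specializations. For (i)$\Rightarrow$(ii) I would set $t=0$ and use $\alpha(0,y)=y$, so that the set in (i) restricts to exactly the set in (ii) with the same $O$ and $V$. For (ii)$\Rightarrow$(iii) I would specialize to $y=x$: because $\Delta\subseteq V$, each pair $(x,x)$ lies in $V$, and (ii) then yields $(\alpha(s,x),x)\in U$ for all $s\in O$ and all $x\in X$, which is (iii).

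The first genuine step is (iii)$\Rightarrow$(i), where I would exploit the splitting axiom of a uniformity. Given $U\in\cU$, I would first choose a symmetric $W\in\cU$ with $W\circ W\circ W\subseteq U$, obtained by iterating the composition axiom and intersecting with the inverse. Applying (iii) to $W$ produces an open $O\ni 0$ with $(\alpha(s,x),x)\in W$ for all $s\in O$ and $x\in X$; I would then take $V=W$. For $s,t\in O$ and $(x,y)\in V=W$ I would chain the three memberships $(\alpha(s,x),x)\in W$, $(x,y)\in W$, and $(y,\alpha(t,y))\in W$ (the last using the symmetry of $W$ together with (iii) applied to $t$) to conclude $(\alpha(s,x),\alpha(t,y))\in W\circ W\circ W\subseteq U$, which is (i).

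The equivalence (iii)$\Leftrightarrow$(iv) is only an unwinding of definitions: equicontinuity of $\{\alpha_x\}_{x\in X}$ at $0$ means one open $O$ works simultaneously for every $x$, and $(\alpha_x(s),\alpha_x(0))=(\alpha(s,x),x)$, so the two statements coincide verbatim. Finally, in (iv)$\Leftrightarrow$(v) the direction (v)$\Rightarrow$(iv) follows by taking $t=0$. The reverse implication (iv)$\Rightarrow$(v) is the step I expect to be the main obstacle, since here the group structure must be used to turn control near the base point into uniform control: (iv) only bounds the displacement of $\alpha_x$ measured from $0$, whereas (v) asks for a bound on $(\alpha_x(s),\alpha_x(t))$ whenever $s-t\in O$. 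The device is the cocycle identity $\alpha(s,x)=\alpha\bigl((s-t)+t,x\bigr)=\alpha\bigl(s-t,\alpha(t,x)\bigr)$: writing $y:=\alpha(t,x)$ and $r:=s-t$ gives $(\alpha_x(s),\alpha_x(t))=(\alpha(r,y),y)=(\alpha_y(r),\alpha_y(0))$. Because the $O$ furnished by (iv) is valid for every point of $X$, in particular for $y$, the condition $r\in O$ yields $(\alpha_y(r),y)\in U$ and hence $(\alpha_x(s),\alpha_x(t))\in U$ for all $x$, which is exactly the uniform equicontinuity in (v). This closes the chain of equivalences.
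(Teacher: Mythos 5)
Your proposal is correct and follows essentially the same route as the paper: the trivial specializations for (i)$\Rightarrow$(ii)$\Rightarrow$(iii), the symmetrized three-fold composition $W\circ W\circ W\subseteq U$ for (iii)$\Rightarrow$(i), and the cocycle identity $\alpha(s,x)=\alpha(s-t,\alpha(t,x))$ to upgrade equicontinuity at $0$ to uniform equicontinuity. The only difference is cosmetic bookkeeping: the paper closes the loop as (iii)$\Rightarrow$(v)$\Rightarrow$(iv)$\Rightarrow$(iii), while you observe that (iii) and (iv) are verbatim the same statement and then prove (iv)$\Rightarrow$(v) by the identical translation argument.
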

\begin{proof}
(i)$\implies$(ii): This is obvious, since
\[
\{ (\alpha(s,x), y) \, :\, s \in O , (x,y) \in V \} \subseteq \{ (\alpha(s,x), \alpha (t,y))\, :\, s,t \in O ,\, (x,y) \in V \} \,.
\]

\smallskip

\noindent (ii)$\implies$(iii): This is obvious, since $\Delta = \{ (x,x) \, : \, x \in X \} \subseteq V$.

\medskip

\noindent (iii)$\implies$(i): Let $U \in \cU$ and let $W'$ be so that $W' \circ W' \circ W' \subseteq U$. Let
\[
W= W' \cap (W')^{-1} \,.
\]
Then, by (iii), there exists an open set $O\subseteq G$ containing $0$ such that
\[
\{ (\alpha(s,x), x) \, :\, s \in O ,\, x \in X \} \subseteq W \,.
\]
Now,  we have $(\alpha(s,x),
x) \in W$ for all $s,t \in O$ and $(x,y) \in W'$ and $(\alpha(t,y), y) \in W$. Since $W = W' \cap (W')^{-1}$, we obtain
\[
  (\alpha(s,x), x) \in W' \,, \qquad
  (x,y) \in W'   \qquad   \mbox{ and } \qquad
  (y,\alpha(t, y))  \in W' \,.
\]
This gives  $(\alpha(s,x), \alpha (t,y)) \in W' \circ W' \circ W' \subseteq
U$. This shows that
\[
 \{ (\alpha(s,x), \alpha (t,y)) \,:\, s,t \in O ,\, (x,y) \in W' \} \subseteq U \,.
\]

\smallskip

\noindent (iii)$\implies$(v): Let $U \in \cU$.
By (iii), there exists an open set $O\subseteq G$ containing $0$ such that
\begin{equation}\label{eq3}
\{ (\alpha(s,y), y) \, :\, s \in O ,\, y \in X \} \subseteq U \,.
\end{equation}
Now, let $s-t \in O$. Then, for all $x \in X$, setting
$y=\alpha(t,x)$ in \eqref{eq3} we get
\[
  (\alpha(s-t,\alpha(t,x)),\alpha(t,x)) \in U \,, \qquad
   (\alpha(s,x)),\alpha(t,x))   \in U \qquad \mbox{ and } \qquad
  (\alpha_x(s), \alpha_x(t)) \in U \,.
\]
This shows that $(\alpha_x(s), \alpha_x(t))\in U $ holds for all $x \in X$ and all $s-t \in O$, proving the equicontinuity of this family.

\medskip

\noindent (v)$\implies$(iv): This is obvious.

\medskip

\noindent (iv)$\implies$(iii): This is obvious.
\end{proof}

We can now define the notion of equicontinuous group actions.

\begin{definition}[Equicontinuous group action]\label{def:equi}
Let $\alpha$ be an action of $G$ on the uniform Hausdorff space $(X,\cU)$. We say that $\alpha$ is \textbf{equicontinuous} if it satisfies one (and thus all) the equivalent conditions of Lemma~\ref{lem6}.
\end{definition}

\begin{remark}
We will discuss examples in Section~\ref{sect-exa} where  the action is not continuous, or continuous but not equicontinuous.
\end{remark}

\begin{remark}
The action $\alpha$ is continuous if, for all $U \in \cU$ and $x \in X$, the set
\[
O_{x,U}=\{ t \in G \,:\, (\alpha(t,x),x) \in \cU \}
\]
is an open neighbourhood of $0$ in $G$.

The action $\alpha$ is equicontinuous if, for all $U \in \cU$, the set
\[
O_U=\{ t \in G\, :\, (\alpha(t,x),x) \in U  \mbox{ for all } x \in X
\}=\bigcap_{x \in X} O_{x,U}
\]
is an open neighbourhood of $0$ in $G$.
\end{remark}

\medskip

\begin{definition}[Orbit and hull]
Let $\alpha$ be an action of $G$ on the uniform Hausdorff space
$(X,\cU)$. For $x\in X$ the \textbf{orbit of $x$} $O(x)$ is defined as
\[
O(x) := \{ \alpha(t,x) \,:\, t \in G \} \,.
\]
The \textbf{hull of $x$} (also called the
\textbf{orbit closure of $x$}), denoted by $H_x^{\cU}$, is defined as the closure of the orbit in $(X, \tau_{\cU})$, i.e.
\[
H_x^{\cU} := \overline { \{ \alpha(t,x) \,:\ t \in G \}}^{\tau_{\cU}} \,.
\]
\end{definition}

Next, let us show that for $G$-invariant uniformities, the continuity of the action implies the equicontinuity on orbit closures.

\begin{lemma}[Equicontinuity on orbit closures] \label{lem:7}
Let $\alpha$ be an action of $G$ on the
uniform Hausdorff space $(X,\cU)$. Assume that $\alpha$ is continuous and
$\cU$ is $G$-invariant. Then, for each $x \in X$, $\alpha$ is equicontinuous on
$H_x^{\cU}$.
\end{lemma}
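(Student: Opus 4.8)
The plan is to verify condition (iii) of Lemma~\ref{lem6} for the action restricted to the subspace $H_x^{\cU}$, equipped with the induced uniformity. First I would record that $H_x^{\cU}$ is invariant under the action: for each fixed $t$ the partial map $\alpha(t,\cdot)$ is continuous (being the restriction of $\alpha$ to $\{t\}\times X$) and maps the orbit $O(x)$ onto itself, hence maps $\overline{O(x)}=H_x^{\cU}$ into itself. Thus the restricted action is well defined, its induced uniformity is $G$-invariant, and it is continuous; so Lemma~\ref{lem6} applies, and equicontinuity on $H_x^{\cU}$ amounts to showing that for every $U\in\cU$ there is an open $O\subseteq G$ with $0\in O$ such that $(\alpha(s,z),z)\in U$ for all $s\in O$ and all $z\in H_x^{\cU}$.

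The first step is to establish this estimate on the orbit itself. Fix a symmetric entourage $W$ (to be chosen later). By $G$-invariance choose $W'\in\cU$ with $\alpha(t,W')\subseteq W$ for all $t\in G$. Since $\alpha$ is continuous, the orbit map $s\mapsto\alpha(s,x)$ is continuous at $0$ with $\alpha(0,x)=x$, so there is an open $O\ni 0$ with $(\alpha(s,x),x)\in W'$ for all $s\in O$. Now take an arbitrary orbit point $w=\alpha(r,x)$ and $s\in O$; then $\alpha(s,w)=\alpha(r,\alpha(s,x))$, so
\[
(\alpha(s,w),w)=(\alpha(r,\alpha(s,x)),\alpha(r,x))\in\alpha(r,W')\subseteq W \,.
\]
The key point is that $O$ does not depend on $r$, so this holds uniformly over all $w\in O(x)$.

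The second step passes to the closure. Given $U$, choose a symmetric $W$ with $W\circ W\circ W\subseteq U$ and run the first step with this $W$ to obtain $O$. Fix $z\in H_x^{\cU}$ and $s\in O$. Since $\alpha(s,\cdot)$ is continuous at $z$, the set of $w$ with $(\alpha(s,z),\alpha(s,w))\in W$ is a neighbourhood of $z$; intersecting it with $W[z]$ still yields a neighbourhood of $z$, which therefore meets $O(x)$. Pick such an orbit point $w$. Then $(\alpha(s,z),\alpha(s,w))\in W$ by continuity, $(\alpha(s,w),w)\in W$ by Step~1, and $(w,z)\in W$ by the choice of $w$ together with the symmetry of $W$; composing these three gives $(\alpha(s,z),z)\in W\circ W\circ W\subseteq U$. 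As $O$ depends only on $U$, this is precisely condition (iii) for the restricted action.

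The main obstacle is this second step: the uniform orbit estimate must be transferred to an arbitrary limit point $z$ while keeping the neighbourhood $O$ fixed. The device is to absorb the ``gap'' between $z$ and a nearby orbit point $w$ into the outer factors of the threefold composition $W\circ W\circ W$, relying on the fact that the inner comparison $(\alpha(s,w),w)\in W$ is already uniform over the orbit. One should note that the approximating point $w$ is permitted to depend on both $s$ and $z$ (only $O$ must be independent of them), and that choosing $W$ symmetric is what allows the proximity of $w$ to $z$ to be read in the order $(w,z)$ needed for the composition.
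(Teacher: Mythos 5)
Your proof is correct and follows essentially the same route as the paper's: continuity of the orbit map at $0$ is made uniform over the whole orbit via $G$-invariance, and the estimate is then transferred to the closure by approximating a limit point with a nearby orbit point and composing entourages, concluding via Lemma~\ref{lem6}. The only cosmetic difference is that you verify condition (iii) of Lemma~\ref{lem6} with a threefold composition (using continuity of the partial maps $\alpha(s,\cdot)$ at the limit point), whereas the paper verifies condition (ii) directly with a fourfold composition.
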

\begin{proof}
Let $U \in \cU$, and let $V \in \cU$ be such that $V \circ V \circ V
\circ V\subseteq U$. Since $\alpha$ is $G$-invariant, there exists some $V' \in \cU$ such that
\begin{equation}\label{eq21}
\{ (\alpha(t,x), \alpha(t,y)) \,:\, t \in G,\, (x,y) \in V' \} \subseteq V \,.
\end{equation}
Clearly, $V'\subseteq V$ holds (as we can take $t=0$ on the left hand
side).
Since $\alpha$ is continuous, there exists an open set $O
\subseteq G$ containing $0$ such that
\begin{equation}\label{eq22}
(\alpha(t,x), x) \in V'
\end{equation}
for all $ t \in O$.
Now, let $t \in O$ and $y,z \in H_x$ be arbitrary such that $(y,z) \in V'$.
Since $y,z \in H_z$ there exist $s,r \in G$ such that $(y, \alpha(s,x)) \in V'$ and $(z, \alpha(r,x)) \in V'$.
Then, since $(y, \alpha(s,x)) \in V'$, we have by \eqref{eq21}
\[
(\alpha(t,y),\alpha(s+t,x)) \in V \,.
\]
Moreover, since $t \in O$, by \eqref{eq22}
\[
(\alpha(t,x),x) \in V'
\]
and hence, again by
by \eqref{eq21}
\[
(\alpha(t+s,x),\alpha(s,x)) \in V \,.
\]
Therefore,
\[
(\alpha(t,y), \alpha(s,x)) \in V \circ V  \,.
\]
Finally, since $(y, \alpha(s,x)) \in V' \subseteq V$, we get
\[
(\alpha(t,y), y) \in V \circ V \circ V \,.
\]
Using $(y,z) \in V' \subseteq V$, we obtain
\[
(\alpha(t,y), z) \in V \circ V \circ V \circ V \subseteq U \,.
\]
Therefore, we have
\[
(\alpha(t,y), z)  \in U
\]
for all $t \in O$ and $y,z \in H_x$ with $(y,z) \in V'$. By Lemma~\ref{lem6}, the action $\alpha$ is equicontinuous on $H_x$.
\end{proof}

\section{Bohr and Bochner type almost periodicity}
For this entire section we let a LCAG $G$ be given and $\alpha$ is
an action of $G$ on a uniform Hausdorff space $(X, \cU)$. We discuss
the standard definitions of almost periodicity in this context.

\smallskip

Let $A \subseteq G$ be any set. Then, $A$ is \textbf{relatively
dense} if there exists a compact set $K$ such that
\[
A+K =G \,.
\]
We say that $A$ is \textbf{finitely relatively dense} if there exists a
set finite $F \subseteq G$ such that $A+F=G$. Clearly, a finitely
relatively dense set is relatively dense.

\smallskip

We can now introduce the following definitions.

\begin{definition}\label{def:ap-type}
For $x \in X$ and $U \in \cU$ define
\[
P_{U}(x):=\{ t \in G \,:\, (\alpha(t,x), x) \in U \} \,.
\]
\begin{itemize}
\item[(a)] The element $x \in X$ s called \textbf{Bohr-type almost periodic}
if for all $U \in \cU$ the set $P_U(x)$ is relatively dense.
\item[(b)] The element $x \in X$ is called \textbf{Bochner-type almost
periodic} if $H_x^\cU$ is compact in $(X, \tau_{\cU})$.
\item[(c)] The element $x \in X$ is called \textbf{pseudo Bochner-type almost
periodic}\footnote{ We will see below that this definition is closely related to the total boundedness of the orbit $O_x$, and hence to Bochner type almost periodicity.} if for all $U \in \cU$ the set $P_U (x)$ is finitely
relatively dense.
\end{itemize}
\end{definition}

\smallskip

\begin{remark}
\begin{itemize}
\item[(a)]
Let $\alpha$ be an action of $G$ on a uniform Hausdorff space
$(X,\cU)$ such that the uniformity  is $G$-invariant.
 Then, for all $U \in \cU$ there exists some $V \in \cU$ such that,
for all $x \in X$ and $t \in G$, we have
\[
  P_{V}(x) \subseteq P_{U}(\alpha(t,x))  \qquad \text{ and } \qquad
  P_V(\alpha(t,x)) \subseteq  P_U(x) \,.
\]
\item[(b)] Consider the space $\Bap_{2,\cA}(G)$ of Besicovitch almost periodic functions on a LCAG $G$ (see \cite{LSS} for details) together with the uniformity $\cU$ given by the metric $d(f,g):= \| f-g \|_{b,2,\cA}$, and the action $\alpha(t,f)=\tau_tf$ (see \cite{LSS}). Then, an element $f \in \Bap_{2,\cA}(G)$ is Bohr-type almost periodic if and only if $f$ is a mean almost periodic function.

We will see below more examples, where Bohr/Bochner-type almost periodicity is equivalent with some other standard notion of almost periodicity. For this reason, to avoid confusion, we used the names Bohr-type and Bochner-type almost periodicity in Definition~\ref{def:ap-type}, instead of the simpler Bohr or Bochner almost periodicity.
\end{itemize}
\end{remark}


\smallskip

Next, we will see that there is a connection between the total boundedness of the orbit of an element $x$ and pseudo Bochner type almost periodicity.

\begin{lemma}\label{lem:AP-TB}
Let $\alpha$ be an action of $G$ on $(X, \cU)$ and let $\cU$ be $G$-invariant. Then, an element $x\in X$ is pseudo Bochner-type almost periodic if and only if the orbit $O(x)$ is totally bounded.
\end{lemma}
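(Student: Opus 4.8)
The plan is to prove both implications by unwinding the definitions and exploiting $G$-invariance to pass between ``centered at $x$'' and ``centered at an arbitrary orbit point'' statements. Recall that $O(x)$ is totally bounded means: for each $U \in \cU$ there exist finitely many sets $U_1,\ldots,U_n \subseteq X$ covering $O(x)$ with $\bigcup_j (U_j \times U_j) \subseteq U$. And $x$ is pseudo Bochner-type almost periodic means: for each $U \in \cU$ the set $P_U(x) = \{t \in G : (\alpha(t,x),x) \in U\}$ is finitely relatively dense, i.e.\ there is a finite $F \subseteq G$ with $P_U(x) + F = G$. The bridge between these is the observation (recorded in Remark~(a) after Definition~\ref{def:ap-type}) that $G$-invariance lets me compare $P_V(x)$ with $P_U(\alpha(t,x))$, which is exactly how a cover of the orbit translates into a covering of $G$ by translates of an almost-period set and vice versa.

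First I would prove that total boundedness implies pseudo Bochner-type almost periodicity. Fix $U \in \cU$; using $G$-invariance choose $V \in \cU$ with $\alpha(t,V) \subseteq U$ for all $t$, and shrink $V$ (replacing it by a symmetric $W$ with $W \circ W \subseteq V$) so that I have room to compose. Apply total boundedness to $V$ to get a finite cover $O(x) \subseteq \bigcup_{j=1}^n U_j$ with each $U_j \times U_j \subseteq V$. Each orbit point $\alpha(s,x)$ lies in some $U_j$; pick a representative $\alpha(t_j,x) \in U_j$ (whenever $U_j$ meets the orbit) and set $F = \{-t_j : j\}$, a finite set. For an arbitrary $s \in G$, the point $\alpha(s,x)$ lies in some $U_j$ together with $\alpha(t_j,x)$, so $(\alpha(s,x),\alpha(t_j,x)) \in V$; applying the $G$-invariance comparison to translate this to a statement about $(\alpha(s-t_j,x),x)$ shows $s - t_j \in P_U(x)$, i.e.\ $s \in P_U(x) + t_j \subseteq P_U(x) + (-F)$. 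Hence $P_U(x)$ is finitely relatively dense.

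For the converse, fix $U \in \cU$ and again use $G$-invariance to find a symmetric $V \in \cU$ with $\alpha(t,V) \subseteq U$ for all $t$ and $V \circ V \subseteq U$ (chaining as needed). By hypothesis $P_V(x)$ is finitely relatively dense, so there is a finite $F = \{s_1,\ldots,s_n\} \subseteq G$ with $P_V(x) + F = G$. I claim the finitely many sets $U_j := V[\alpha(s_j,x)] = \{y : (\alpha(s_j,x),y) \in V\}$ cover $O(x)$ and have $U_j \times U_j$ small: given any $s$, write $s = p + s_j$ with $p \in P_V(x)$, so $(\alpha(p,x),x) \in V$; applying the invariance/translation comparison shows $\alpha(s,x) = \alpha(p+s_j,x)$ lies in $U_j$. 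Finally, for $y,y' \in U_j$ I estimate $(y,y')$ by composing through $\alpha(s_j,x)$ using symmetry of $V$ and $V \circ V \subseteq U$, which gives $U_j \times U_j \subseteq U$, establishing total boundedness of $O(x)$.

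The main obstacle, and the place where $G$-invariance is indispensable, is the repeated ``translation of an entourage relation along the action'': knowing $(\alpha(a,x),\alpha(b,x)) \in V$ I need to conclude $(\alpha(a-b,x),x) \in U$ (and symmetrically), which is precisely the content of applying $\alpha(-b,\cdot)$ and invoking $\alpha(-b,V) \subseteq U$. Getting the quantifier order right — choosing the auxiliary entourages $V$ (and the intermediate symmetric/composable refinements) \emph{before} invoking total boundedness or finite relative density, so that the constants match up — is the delicate bookkeeping; the cleanest route is to cite Remark~(a) directly rather than re-derive the comparison inequalities each time, which reduces both directions to essentially the combinatorics of turning a finite cover into a finite set $F$ and back.
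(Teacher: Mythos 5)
Your overall strategy coincides with the paper's: the same preliminary $G$-invariance observation (turning $(\alpha(t,x),\alpha(r,x)) \in V$ into $t \in r + P_U(x)$), the same translation of a finite cover of $O(x)$ into finitely many translates of $P_U(x)$ covering $G$, and the same construction in reverse. The forward direction (total boundedness implies pseudo Bochner-type almost periodicity) is correct as written, up to the harmless naming slip ($F$ versus $-F$) and an unnecessary symmetric refinement that is never used.

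The converse direction, however, has a genuine entourage mismatch as executed. You choose a \emph{single} symmetric $V$ satisfying both $\alpha(t,V) \subseteq U$ for all $t$ and $V \circ V \subseteq U$, invoke finite relative density of $P_V(x)$, and define the covering sets $U_j := V[\alpha(s_j,x)]$ as $V$-balls. The covering step then fails: from $(\alpha(p,x),x) \in V$ with $s = p + s_j$, translating by $s_j$ only yields $(\alpha(s,x),\alpha(s_j,x)) \in U$, because your hypothesis on $V$ is that its translates land in $U$ --- not in $V$ itself (and $G$-invariance cannot give you an entourage whose translates stay inside itself \emph{and} which composes into $U$, at least not directly). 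So you cannot conclude $\alpha(s,x) \in U_j$; and if you instead enlarge the $U_j$ to $U$-balls so that the cover holds, then $U_j \times U_j$ only lands in $U^{-1} \circ U$, which need not be contained in $U$. The repair requires a \emph{third}, intermediate entourage, which is exactly how the paper arranges the quantifiers: first choose $W \in \cU$ with $W \subseteq U$ and $W \circ W^{-1} \subseteq U$; then, by $G$-invariance, choose $V$ with $\alpha(t,V) \subseteq W$ for all $t \in G$; then invoke finite relative density of $P_V(x)$ and define the $U_j$ as $W$-balls around the points $\alpha(s_j,x)$. With this ordering, translating a $V$-relation lands in $W$, so the cover holds, and $U_j \times U_j \subseteq W \circ W^{-1} \subseteq U$. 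Your closing paragraph correctly anticipates that ``intermediate symmetric/composable refinements'' must be fixed before invoking the hypotheses, but the proof as written collapses the roles of $W$ and $V$ into one entourage, and with only two levels the argument cannot close.
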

\begin{proof}
We start with a preliminary observation used
repeatedly in the proof. Let $U \in \cU$. Due to the $G$-invariance, there
exists a $V\in \cU$ with $(\alpha(s,x),\alpha(s,y))\in U$ for all
$(x,y)\in V$ and $s\in G$. Hence, $(\alpha(t,x),\alpha(r,x))\in V$
for some $r,s\in G$ implies
\[
(\alpha(t-r,x),x)=(\alpha(-r,\alpha(t,x)),\alpha(-r,\alpha(r,x))) \in U \,,
\]
which in turn
gives
\[
t\in r + P_U (x) \,.
\]

\medskip

\noindent $\Longleftarrow$: Let $U\in \cU$ be given. Choose $V\in \cU$ with
$(\alpha(s,x),\alpha(s,y))\in U$ for all $(x,y)\in V$ and $s\in G$.
Since $O(X)$ is totally bounded, there exists some $V_1,\ldots, V_n
\subseteq X$ such that
\[
O(X) \subseteq V_1\cup \ldots \cup V_n \qquad \mbox{ and } \qquad
  \bigcup_{j=1}^n (V_j \times V_j) \subseteq V  \,.
\]
Without loss of generality we can assume  $O(X) \cap V_j \neq
\varnothing$, as otherwise we can erase $U_j$ from our list.
Therefore, for each $1 \leq j \leq n$ there exists some $t_j \in G$
such that $\alpha(t_j,x) \in V_j$. We claim
\[
G =  \bigcup_{j=1}^n (t_j + P_U(x))
\]
(which gives finitely relative denseness of $P_U (x)$ and, hence,
Bochner-type-almost periodicity of $x$).
Indeed, let $t \in G$. Then,
\[
\alpha(t,x) \in O(X) \subseteq V_1\cup \ldots \cup V_n
\]
holds and,  hence, there exists some $1 \leq j \leq n$ such that $\alpha(t,x) \in V_j$. Therefore,
\[
(\alpha(t,x), \alpha(t_j,x)) \in V_j \times V_j \subseteq V
\]
holds and
\[
t \in  P_U(\alpha(t_j,x)) \,.
\]
follows from the observation at the beginning of the proof.

\medskip

\noindent $\Longrightarrow$: Let $U \in \cU$ be given.
Let $W \in \cU$ be given with $W \subseteq U$ and $W \circ W^{-1}
\subseteq U$. Choose $V\in \cU$ with $(\alpha(s,x),\alpha(s,y))\in W$
for all $s\in G$ and $(x,y)\in V$. Since $x$ is pseudo Bochner-type
almost periodic, there exist $t_1, \ldots ,t_n$ with
\[
G = \bigcup_{j=1}^n (t_j + P_V(x)) \,.
\]
For each $1 \leq j \leq n$ define
\[
U_j= \{ y \in X \,:\, (y, \alpha(t_j,x)) \in W \}  \,.
\]

Now, let $t \in G$ be arbitrary. Then, there exists some $1 \leq j
\leq n$ with $t \in t_j +  P_V(x)$ and hence
\[
(\alpha(t-t_j,x),x) \in V \,.
\]
This gives
\[
(\alpha(t,x), \alpha(t_j,x))\in W
\]
and,  hence,
 $\alpha(t,x) \in U_j$. As $t\in G$ was arbitrary, we infer
\[
O(X) \subseteq \bigcup_{j=1}^n U_j \,.
\]
It remains to show $U_j \times U_j \subseteq U$ for $j =1,\ldots, n$.
So, let $1 \leq j \leq n$ and $(y,z) \in U_j \times U_j$ be given.
Then $(y, \alpha(t_j,x)),  (z, \alpha(t_j,x)) \in W$ giving $(y,
\alpha(t_j,x)) \in W$ and $  (\alpha(t_j,x), z) \in W^{-1}$ and,
hence,
\[
(y,z) \in W \circ W^{-1} \subseteq U \,.
\]
This shows that $U_j \times U_j \subseteq U$ for all $j$ and the
proof is finished.
\end{proof}

By combining this result with Theorem~\ref{thm-tot-bound}, we obtain the following connection between Bochner-type almost periodicity and pseudo Bochner-type almost periodicity.

\begin{prop}\label{prop3}
Let $G$ act on a uniform topological Hausdorff space and let $\cU$
be $G$-invariant. Then, the following holds for $x\in X$:
\begin{itemize}
  \item[(a)] If $x$ is Bochner-type almost periodic then $x$ is pseudo Bochner-type almost periodic.
  \item[(b)] If $(X, \tau_U)$ is complete and $x$ is pseudo Bochner-type almost periodic then $x$ is Bochner-type almost periodic.
\end{itemize}
\qed
\end{prop}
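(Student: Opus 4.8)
The plan is to route both implications through the equivalence established in Lemma~\ref{lem:AP-TB}, which---using the $G$-invariance of $\cU$---identifies pseudo Bochner-type almost periodicity of $x$ with total boundedness of the orbit $O(x)$. Once this translation is in place, each part becomes a direct application of Theorem~\ref{thm-tot-bound}, so the proposition is essentially a bookkeeping combination of the two cited facts.

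For part (a), I would argue as follows. By assumption $x$ is Bochner-type almost periodic, so $H_x^{\cU} = \overline{O(x)}$ is compact in $\tau_{\cU}$. Theorem~\ref{thm-tot-bound}(a) then gives that $\overline{O(x)}$ is totally bounded. Since $O(x) \subseteq \overline{O(x)}$, and any subset of a totally bounded set is itself totally bounded---directly from the definition, as the same finite covering family $U_1, \ldots, U_n$ witnessing total boundedness of $\overline{O(x)}$ for a given entourage also covers $O(x)$---we conclude that $O(x)$ is totally bounded. By Lemma~\ref{lem:AP-TB}, this is precisely the statement that $x$ is pseudo Bochner-type almost periodic.

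For part (b), I would proceed in the reverse direction. Since $x$ is pseudo Bochner-type almost periodic, Lemma~\ref{lem:AP-TB} yields that $O(x)$ is totally bounded. As $(X, \cU)$ is assumed complete, Theorem~\ref{thm-tot-bound}(b) applies and gives that $\overline{O(x)} = H_x^{\cU}$ is compact, which is exactly the assertion that $x$ is Bochner-type almost periodic.

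There is no genuine obstacle here, since both directions are assembled from results proved earlier. The only step worth stating rather than silently invoking is the passage from total boundedness of $\overline{O(x)}$ to that of $O(x)$ in part (a): this is immediate because total boundedness is inherited by subsets, but flagging it keeps the logical chain transparent.
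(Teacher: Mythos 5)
Your proof is correct and is exactly the argument the paper intends: the proposition is stated with no written proof precisely because it follows by combining Lemma~\ref{lem:AP-TB} with Theorem~\ref{thm-tot-bound}, which is what you do. Your explicit remark that total boundedness passes to subsets (needed in part (a)) is a reasonable detail to flag, and it holds directly from the definition.
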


In fact, it is easy to see that the equivalence between Bochner-type and pseudo Bochner-type almost periodicity is a matter of the completeness of $\cU$.

\begin{coro}
Let $G$ act on a uniform topological Hausdorff space
and assume that the uniformity is $G$-invariant. Let  $x \in X$ be
pseudo-Bochner almost periodic. Then $x$ is Bochner almost periodic
if and only if $H_x^{\cU}$ is complete.
\end{coro}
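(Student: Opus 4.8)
The plan is to recast the statement in terms of total boundedness and completeness of the hull $H_x^{\cU}$, and then to invoke the characterization of compactness (compact $\iff$ totally bounded and complete) recorded in the Remark following Theorem~\ref{thm-tot-bound}. Throughout I equip $H_x^{\cU}$ with the uniformity inherited from $\cU$, under which it is again a uniform Hausdorff space, and I keep in mind that since $\cU$ is $G$-invariant, Lemma~\ref{lem:AP-TB} applies and tells us that $x$ being pseudo Bochner-type almost periodic is the same as the orbit $O(x)$ being totally bounded in $(X,\cU)$.

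The forward implication is immediate. If $x$ is Bochner-type almost periodic, then $H_x^{\cU}$ is compact, and a compact uniform Hausdorff space is complete, which is exactly the completeness half of the characterization of compactness following Theorem~\ref{thm-tot-bound}. Hence $H_x^{\cU}$ is complete.

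For the converse I would start from the hypothesis that $x$ is pseudo Bochner-type almost periodic, so that $O(x)$ is totally bounded in $(X,\cU)$ by Lemma~\ref{lem:AP-TB}. The key observation is that total boundedness is intrinsic to a subspace: a subset $A$ of $Y\subseteq X$ is totally bounded for the induced uniformity on $Y$ precisely when it is totally bounded in $(X,\cU)$. Indeed, given any entourage one simply intersects the covering sets $U_j$ furnished by total boundedness with $Y$, using $(U_j\cap Y)\times(U_j\cap Y)\subseteq (U_j\times U_j)\cap(Y\times Y)$ to see that the trace entourage is still respected. Applying this with $Y=H_x^{\cU}$ and $A=O(x)$ shows that $O(x)$ is totally bounded in the uniform Hausdorff space $(H_x^{\cU},\cU|_{H_x^{\cU}})$. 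Since $O(x)$ is dense in $H_x^{\cU}$, its closure taken inside $H_x^{\cU}$ equals $H_x^{\cU}$ itself. As $H_x^{\cU}$ is complete by hypothesis, Theorem~\ref{thm-tot-bound}(b), applied to the complete Hausdorff uniform space $H_x^{\cU}$ and the totally bounded set $O(x)$, yields that this closure, namely $H_x^{\cU}$, is compact. Thus $x$ is Bochner-type almost periodic.

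The only step requiring genuine care is the intrinsic character of total boundedness and the attendant bookkeeping of passing to the subspace uniformity; once that is in place, the result is simply Theorem~\ref{thm-tot-bound}(b) applied to $H_x^{\cU}$ regarded as a complete uniform space in its own right. This is precisely the refinement of Proposition~\ref{prop3} anticipated in the text, replacing completeness of the ambient space $(X,\cU)$ by completeness of the single hull $H_x^{\cU}$.
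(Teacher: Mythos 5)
Your proof is correct and follows exactly the route the paper intends: the corollary is stated there without proof as an immediate consequence of Lemma~\ref{lem:AP-TB} (pseudo Bochner-type almost periodicity $\iff$ total boundedness of $O(x)$ under $G$-invariance) together with Theorem~\ref{thm-tot-bound} and the remark that compact $\iff$ totally bounded and complete, applied to the hull with its induced uniformity. Your explicit verification that total boundedness is intrinsic to the subspace uniformity is the only nontrivial bookkeeping step, and you handle it correctly.
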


\medskip

By definition, any pseudo Bochner-type almost periodic point is
Bohr-type almost periodic (as a finitely relatively dense set is clearly
relatively dense). Next, we want to show that for equicontinuous
group actions, Bohr-type almost periodicity implies pseudo Bochner-type almost periodicity. This will allow us show, in the case of equicontinuous group actions of $G$ on a complete $G$-invariant uniform Hausdorff space
$(X, \cU)$, the equivalence between Bohr-type, Bochner-type and
pseudo Bochner-type almost periodicity.

\begin{prop}\label{prop-2}
Let $G$ act on a uniform topological Hausdorff space $(X,\cU)$ such
that the action is equicontinuous, and let $x \in X$. Then, $x$ is
Bohr-type almost periodic  (i.e. $P_U(x)$ is relatively dense for
any $U\in\cU$) if and only if $x$ is pseudo-Bochner-type almost
periodic (i.e. for all $U \in \cU$ the set $P_U(x)$ is finitely
relatively dense).
\end{prop}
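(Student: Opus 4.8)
The plan is to prove the two implications separately, the forward one being immediate. Indeed, pseudo-Bochner-type almost periodicity means that each $P_U(x)$ is finitely relatively dense, i.e.\ $P_U(x) + F = G$ for some finite $F$; since finite sets are compact, this exhibits $P_U(x)$ as relatively dense, so $x$ is automatically Bohr-type almost periodic. This is precisely the remark preceding the statement, so the whole content lies in the converse: showing that, under equicontinuity, relative density of every $P_U(x)$ upgrades to finite relative density.

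For the converse, fix $U \in \cU$ and choose $V \in \cU$ with $V \circ V \subseteq U$. The key idea is to use equicontinuity to convert the compact set witnessing relative density into a finite one. First, by equicontinuity I will invoke form (iii) of Lemma~\ref{lem6} to produce an open neighbourhood $O$ of $0$ in $G$ with $(\alpha(s,y),y) \in V$ for all $s \in O$ and all $y \in X$. Second, applying Bohr-type almost periodicity to the entourage $V$, the set $P_V(x)$ is relatively dense, so there is a compact $K \subseteq G$ with $P_V(x) + K = G$. Since $\{ k + O : k \in K \}$ is an open cover of the compact set $K$, I extract a finite subcover to obtain $k_1, \dots, k_n \in K$ with $K \subseteq \bigcup_{j=1}^n (k_j + O)$. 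I then set $F := \{ k_1, \dots, k_n \}$ and claim $P_U(x) + F = G$.

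To verify the claim, take an arbitrary $t \in G$. Writing $t = p + k$ with $p \in P_V(x)$ and $k \in K$, and then $k = k_j + s$ with $s \in O$, gives $t - k_j = p + s$. Now the two pieces of information combine through the composition $V \circ V$: on one hand $p \in P_V(x)$ says $(\alpha(p,x),x) \in V$; on the other hand, applying the equicontinuity neighbourhood $O$ at the point $y = \alpha(p,x)$ together with $\alpha(p+s,x) = \alpha(s,\alpha(p,x))$ yields $(\alpha(p+s,x),\alpha(p,x)) \in V$. Chaining these through $\alpha(p,x)$ gives $(\alpha(t-k_j,x),x) = (\alpha(p+s,x),x) \in V \circ V \subseteq U$, i.e.\ $t - k_j \in P_U(x)$ and hence $t \in P_U(x) + F$. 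As $t$ was arbitrary, $P_U(x)$ is finitely relatively dense.

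The only real obstacle is the middle step, turning a compact witness of relative density into a finite one, and this is exactly where equicontinuity is indispensable: it supplies a single neighbourhood $O$ that works uniformly over \emph{all} base points $y$, allowing the small translation error $s \in O$ to be absorbed into the entourage no matter where on the orbit the relevant point $\alpha(p,x)$ sits. Without uniformity over $y$ one could not control $\alpha(p+s,x)$ against $\alpha(p,x)$ simultaneously for all the $p \in P_V(x)$ that arise, and the finite cover $F$ of $K$ would fail to do its job. The remainder is the routine bookkeeping of composing entourages, which I would keep brief.
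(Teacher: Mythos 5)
Your proof is correct and follows essentially the same route as the paper's: equicontinuity supplies a uniform neighbourhood $O$ of $0$, Bohr-type almost periodicity gives a compact $K$ with $P_V(x)+K=G$, compactness of $K$ yields a finite cover by translates of $O$, and the small translation error $s\in O$ is absorbed into the almost period. The only cosmetic difference is that you invoke form (iii) of Lemma~\ref{lem6} together with an explicit composition $V\circ V\subseteq U$, whereas the paper invokes form (i), which packages that composition inside the lemma itself.
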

\begin{proof}
$\Longleftarrow$: This  follows immediately from the fact that every
finite set in $G$ is compact.

\medskip

\noindent $\Longrightarrow$: Let $U \in \cU$ be arbitrary. Since the action is
equicontinuous, there exists an open set $O\subseteq G$ and $V \in
\cU$ such that
\begin{equation}\label{eq2}
\{ (\alpha(s,x), \alpha (t,y))\,:\, s,t \in O ,\, (x,y) \in V \} \subseteq U \,.
\end{equation}

Since $x$ is Bohr-type almost periodic, there exists a compact set $K$ such that
\[
P_V(x)+K = G \,.
\]
Next, since $K$ is compact, and $O$ is open, there exists some finite set $F$ such that
\[
K \subseteq F+O \,.
\]
We will show that
\[
G=P_U(x)+F \,.
\]

Indeed, let $t \in G$. Then, there exists some $s \in P_V(x)$ and $k \in K$ such that $t=s+k$.  Since $k \in K \subseteq F+O$, there exists some $f \in F$ and $u \in O$ such that
\[
k=f+u \,.
\]
Therefore
\[
t=s+f+u \,.
\]

We claim that $s+u\in P_U(x)$, which will complete the proof. Indeed, $s
\in P_V(x)$ implies that
\[
(\alpha(s,x), x) \in V
\]
Since $u,0  \in O$ and $(\alpha(s,x), x) \in V$ by \eqref{eq2} we have
\[
(\alpha(u,\alpha(s,x)), \alpha(0,x))=(\alpha(u+s,x), x) \in U
\]
and hence $s+u \in P_U(x)$ as claimed.
\end{proof}

The next result is a consequence of the previous proposition.

\begin{theorem}[Main result - I] \label{ap-equiv}
Let $G$ act on a complete uniform topological Hausdorff
space $(X,\cU)$ such that the action is continuous and $\cU$ is $G$-invariant.
Let $x \in X$. Then, the following statements are equivalent.
\begin{itemize}
  \item[(i)] $x$ is Bohr-type almost periodic.
  \item[(ii)] $x$ is Bochner-type almost periodic.
  \item[(iii)] $x$ is pseudo Bochner-type almost periodic.
\end{itemize}
\end{theorem}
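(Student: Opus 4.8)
The plan is to assemble the three equivalences from the results already proved, with essentially all the work concentrated in the implication (i) $\Rightarrow$ (iii). The equivalence of (ii) and (iii) is immediate from Proposition~\ref{prop3}: part~(a) gives (ii) $\Rightarrow$ (iii) using only the $G$-invariance of $\cU$, while part~(b) gives (iii) $\Rightarrow$ (ii) using the completeness hypothesis. Likewise (iii) $\Rightarrow$ (i) is trivial, since every finitely relatively dense set is relatively dense, so if each $P_U(x)$ is finitely relatively dense it is in particular relatively dense. Thus the only substantial step is (i) $\Rightarrow$ (iii), and these three facts together close the cycle $(i)\Rightarrow(iii)\Rightarrow(ii)\Rightarrow(iii)\Rightarrow(i)$.

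The difficulty in (i) $\Rightarrow$ (iii) is that Proposition~\ref{prop-2}, which would instantly yield it, requires the action to be equicontinuous on all of $X$, whereas our hypotheses (continuity together with $G$-invariance) only guarantee, through Lemma~\ref{lem:7}, equicontinuity on the orbit closure $H_x^{\cU}$. The remedy is to pass to $H_x^{\cU}$ as a uniform space in its own right. First I would observe that $G$ acts on $H_x^{\cU}$: for fixed $t$ the map $\alpha(t,\cdot)$ is continuous (as a composition of the continuous inclusion $y \mapsto (t,y)$ with the continuous action $\alpha$) and it sends $O(x)$ into $O(x)$, hence it sends the closure $H_x^{\cU}$ into itself, so the action restricts. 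Equipping $H_x^{\cU}$ with the trace uniformity $\cU|_{H_x^{\cU}} = \{\, U \cap (H_x^{\cU}\times H_x^{\cU}) : U \in \cU \,\}$, which is again a Hausdorff uniformity, the restricted action is equicontinuous by Lemma~\ref{lem:7}.

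Next I would check that the almost periodicity notions are insensitive to this passage. Since $x$ and every $\alpha(t,x)$ lie in $H_x^{\cU}$, for each $U \in \cU$ and its trace $\widetilde U = U \cap (H_x^{\cU}\times H_x^{\cU})$ one has $(\alpha(t,x),x) \in U$ if and only if $(\alpha(t,x),x) \in \widetilde U$, so $P_{\widetilde U}(x) = P_U(x)$ as subsets of $G$; moreover every entourage of the trace uniformity arises as such a $\widetilde U$. Since relative denseness and finite relative denseness are properties of subsets of $G$ alone, it follows that $x$ is Bohr-type almost periodic as an element of $X$ if and only if it is Bohr-type almost periodic as an element of $H_x^{\cU}$, and the same equivalence holds for pseudo Bochner-type almost periodicity. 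Applying Proposition~\ref{prop-2} to the equicontinuous action of $G$ on $H_x^{\cU}$ then shows that Bohr-type almost periodicity of $x$ implies its pseudo Bochner-type almost periodicity, which is exactly (i) $\Rightarrow$ (iii).

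The step I expect to be the main obstacle is the bookkeeping linking $\cU$ to its trace on $H_x^{\cU}$: one must make precise that equicontinuity on $H_x^{\cU}$ in the sense of Lemma~\ref{lem:7} is literally the equicontinuity hypothesis of Proposition~\ref{prop-2} applied to the restricted system, and that trace entourages neither enlarge nor shrink the almost-period sets $P_U(x)$. Both verifications are routine once this correspondence is set up, but they are precisely where the hypotheses of continuity, $G$-invariance, and completeness are consumed.
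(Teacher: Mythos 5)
Your proposal is correct and follows essentially the same route as the paper: pass to the orbit closure $H_x^{\cU}$ with the trace uniformity, use Lemma~\ref{lem:7} to get equicontinuity there, note that the almost-period sets $P_U(x)$ are unchanged under this passage, and then invoke Proposition~\ref{prop-2} for (i)$\iff$(iii) and Proposition~\ref{prop3} for (ii)$\iff$(iii). The only cosmetic difference is that you apply Proposition~\ref{prop3} directly in the complete space $X$ while the paper applies it to $(H_x^{\cU},\cU_x)$; both are valid, and your write-up actually makes explicit the bookkeeping (restriction of the action, trace entourages) that the paper dismisses as easy to see.
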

\begin{proof}
Note that $\cU$ induces a uniformity $\cU_x$ on $H_x^{\cU}$, which is complete and $G$-invariant. By Lemma~\ref{lem:7}, the action $\alpha$ is equicontinuous on $(H_x^\cU, \cU_x)$.
Moreover, it is easy to see that $x$ is Bohr-type, Bochner-type or
pseudo Bochner-type almost periodic in $(X, \cU)$, respectively, if
and only if $x$ is Bohr-type, Bochner-type or pseudo Bochner-type
almost periodic in $(H_x^{\cU}, \cU_x)$.

The equivalence of (i) and (iii) now  follows from
Proposition~\ref{prop-2} and the equivalence of (ii) and (iii) follows
from  Proposition~\ref{prop3}  (applied to $(H_x^\cU, \cU_x)$).
\end{proof}

We complete the section by discussing how -  if  the uniformity is
$G$-invariant and complete -  $H_x^{\cU}$ has a natural Abelian
group structure.

\begin{prop}
Let $G$ act on a uniform topological Hausdorff space $(X,\cU)$ such
that the uniformity $\cU$ is $G$-invariant and $\alpha$ is
continuous. Let $x \in X$ be given such that  $H_x^{\cU}$ is
complete. Then, the following statements hold.
\begin{itemize}
  \item[(a)]
 $H_x^{\cU}$ is a topological  Abelian group with the addition $\oplus$ induced by
\[
\alpha(t,x) \oplus \alpha(s,x):= \alpha(s+t,x) \,.
\]
  \item[(b)] $F: G \to H_x^{\cU}\,$ via
\[
F(t)= \alpha(t,x)
\]
is a uniformly continuous group homomorphism, with dense range.
\end{itemize}
\end{prop}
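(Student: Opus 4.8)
The plan is to first equip the orbit $O(x)$ with an abelian group structure, then verify that the two group operations are uniformly continuous for the uniformity inherited from $\cU$, and finally use the completeness of $H_x^{\cU}$ to push everything to the closure via the standard extension theorem for uniformly continuous maps into a complete Hausdorff space.

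First I would set $K:=\{t\in G:\alpha(t,x)=x\}$ and note that $K$ is a subgroup of $G$ and that $\alpha(t,x)=\alpha(t',x)$ holds if and only if $t-t'\in K$ (apply $\alpha(-t',\cdot)$ to both sides and use the action axioms). Hence $F\colon G\to O(x)$, $F(t)=\alpha(t,x)$, is a surjection whose fibres are exactly the cosets of $K$, so it descends to a bijection $G/K\to O(x)$. This shows at once that $\oplus$ is well defined on $O(x)$, that the candidate inversion $\iota(\alpha(t,x)):=\alpha(-t,x)$ is well defined, that $(O(x),\oplus)$ is an abelian group with neutral element $x=\alpha(0,x)$, and that $F$ is a surjective group homomorphism. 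This step is purely algebraic and needs no continuity.

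The key analytic step, and the main obstacle, is the uniform continuity of $\oplus$ and $\iota$ on $O(x)$, where $G$-invariance is indispensable because it controls all translations $\alpha(s,\cdot)$ simultaneously. The crucial remark is that for $y\in O(x)$ and $z=\alpha(s,x)$ one has $y\oplus z=\alpha(s,y)$, so adding a fixed orbit element amounts to applying $\alpha(s,\cdot)$. Given $U\in\cU$, I would choose $V_0$ with $V_0\circ V_0\subseteq U$ and, by $G$-invariance, a symmetric $V\in\cU$ with $\alpha(r,V)\subseteq V_0$ for all $r\in G$. Writing $y_i=\alpha(t_i,x)$ and $z_i=\alpha(s_i,x)$, if $(y_1,y_2),(z_1,z_2)\in V$ then $(y_1\oplus z_1,y_2\oplus z_1)\in\alpha(s_1,V)\subseteq V_0$ and $(y_2\oplus z_1,y_2\oplus z_2)\in\alpha(t_2,V)\subseteq V_0$, so $(y_1\oplus z_1,y_2\oplus z_2)\in V_0\circ V_0\subseteq U$; this is the uniform continuity of $\oplus$. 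For inversion one computes $(\iota(y_2),\iota(y_1))\in\alpha(-t_1-t_2,V)$, and choosing $V$ symmetric with $\alpha(r,V)\subseteq U$ for all $r$ (applied to both $(y_1,y_2)$ and $(y_2,y_1)\in V$) yields $(\iota(y_1),\iota(y_2))\in U$ whenever $(y_1,y_2)\in V$.

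Finally I would transport the structure to the closure. Since $O(x)$ is dense in $H_x^{\cU}$ and $H_x^{\cU}$, with the inherited uniformity, is complete and Hausdorff, the uniformly continuous maps $\oplus$ and $\iota$ extend uniquely to uniformly continuous maps on $H_x^{\cU}\times H_x^{\cU}$ and on $H_x^{\cU}$, respectively, with values in the complete space $H_x^{\cU}$. Each group axiom is an identity between two continuous maps that agree on the dense set $O(x)$ (respectively $O(x)\times O(x)$ and $O(x)^{3}$), so by Hausdorffness it holds on all of $H_x^{\cU}$; thus $(H_x^{\cU},\oplus)$ is a topological abelian group, which proves (a). For (b), $F$ is a homomorphism by the defining relation of $\oplus$ and its range is $O(x)$, dense by definition of the hull; its uniform continuity follows from Lemma~\ref{lem:7} together with Lemma~\ref{lem6}(v), which give that $\{\alpha_y\}_{y\in H_x^{\cU}}$ is uniformly equicontinuous, so specializing the base point to $x$ yields exactly that $s-t\in O$ implies $(\alpha(s,x),\alpha(t,x))\in U$. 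Alternatively one writes $\alpha(s,x)=\alpha(t,\alpha(s-t,x))$ and combines continuity of $\alpha_x$ at $0$ with $G$-invariance.
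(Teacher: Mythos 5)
Your proof is correct, and its skeleton matches the paper's: equip the orbit $O(x)$ with the group structure, prove uniform continuity of addition and inversion using $G$-invariance (your entourage argument with $V_0\circ V_0\subseteq U$ and $\alpha(r,V)\subseteq V_0$ is essentially identical to the paper's), and then transfer everything to $H_x^{\cU}$ by completeness. The differences are in execution, and they are worth noting. First, you settle well-definedness and the group axioms by identifying $O(x)$ with the quotient $G/K$ by the stabilizer $K$; the paper instead verifies these by direct computation using commutativity of $G$ — your route is cleaner and makes transparent that this step is purely algebraic. Second, and more substantively, for the passage to the closure the paper invokes Bourbaki's theory of Hausdorff completions of topological groups (that $H_x^{\cU}$ is the completion of $O(x)$ and that the completion of a topological Abelian group is again one), whereas you reprove this special case from scratch: extend $\oplus$ and $\iota$ by the extension theorem for uniformly continuous maps into a complete Hausdorff space, then obtain each group identity by continuity and density. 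Your version is more self-contained and isolates exactly what is used (only the basic extension theorem, completeness of $H_x^{\cU}$, and Hausdorffness), at the cost of a little more work on the page. Third, for part (b) you obtain uniform continuity of $F$ by specializing Lemma~\ref{lem6}(v) (available on the hull by Lemma~\ref{lem:7}) at the base point $x$, which is tidier than the paper's two-step argument combining equicontinuity with $G$-invariance; in fact your ``alternative'' remark at the end is precisely the paper's argument.
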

\begin{proof}
Note first that $\alpha$ is equicontinuous on
$H_x^{\cU}$ by Lemma~\ref{lem:7}.

\medskip

\noindent (a) We first show that $\oplus$ defined by
\[
\alpha(t,x) \oplus \alpha(s,x):= \alpha(s+t,x)
\]
is well defined on $O(x)$, defines an Abelian group structure, and
addition and inversion are continuous with respect to the topology
induced by $\cU$.

\smallskip

To show that addition is well defined consider $t,t',s,s'\in G$ with
$\alpha(t,x)=\alpha(t',x)$ and $\alpha(s,x)=\alpha(s',x)$. Then a
direct computation gives
\[
\alpha(t+s,x)=\alpha(t, \alpha(s,x))= \alpha(t, \alpha(s',x)) =\alpha(s', \alpha(t,x))=\alpha(s', \alpha(t',x))=\alpha(t'+s',x) \,.
\]
This shows that $\oplus$ is well defined. By definition, $O(x)$ is
closed under $\oplus$.

\smallskip

Associativity of the addition on $G$ yields that $\oplus$ is
associative. Indeed, a direct computation gives
\[
\left( \alpha(t,x) \oplus \alpha(s,x) \right) \oplus \alpha(r,x) =\alpha( (t+s)+r,x)=\alpha( t+(s+r),x)  \\
=\alpha(t,x) \oplus \left( \alpha(s,x)  \oplus \alpha(r,x)\right)
\,.
\]
Since $G$ is Abelian, it is obvious that $\oplus$ is commutative.
Moreover, for all $t \in G$, we have
\[
\alpha(t,x) \oplus \alpha(0,x)= \alpha(t,x) \,.
\]
This shows that $x=\alpha(0,x)$ is the identity in $O(x)$. Finally, for all $t \in G$, we have
\[
\alpha(t,x) \oplus \alpha(-t,x)= \alpha(0,x) \,.
\]
This shows that $(O(x), \oplus )$ is an Abelian group.

\medskip

We now show that $(O(x), \oplus)$ becomes a topological group when
equipped with the topology induced by $\cU$.

Indeed, let $U \in \cU$ be any entourage. Let $W$ be such that $W \circ W \subseteq U$. Since $\alpha$ is $G$-invariant, there exists some $V \in \cU$ such that, for all $(y,z) \in V$ and $t \in G$, we have
\begin{equation}\label{eq32}
(\alpha(t,y), \alpha(t,z)) \in W \,.
\end{equation}
Now, for all $\alpha(t_1,x), \alpha(t_2,x), \alpha(s_1,x), \alpha(s_2,x) \in O_x$ with
\[
  (\alpha(s_1,x), \alpha(s_2,x))  \in V \qquad \text{ and } \qquad
  (\alpha(t_1,x),\alpha(t_2,x))  \in V \,,
\]
\eqref{eq32} implies
\[
  (\alpha(s_1+t_1,x), \alpha(s_2+t_1,x))  \in W \qquad \text{ and } \qquad
  (\alpha(t_1+s_2,x),\alpha(t_2+s_2,x))  \in W
\]
and hence
\[
(\alpha(s_1+t_1,x), \alpha(s_2+t_2,x)) \in U  \,.
\]
It follows that
\[
\big( \alpha(s_1,x) \oplus \alpha(t_1,x) ,  \alpha(s_2,x) \oplus \alpha(t_2,x) \big) \in U \,.
\]
This proves that $ \oplus : O(x) \times O(x) \to O(x) $ is uniformly
continuous with respect to $\cU$.

We now turn to inversion. As already discussed, any $\alpha(t,x)
\in O(x)$ has the inverse
\[
\ominus \alpha(t,x)= \alpha(-t,x) \,.
\]
Let $U \in \cU$ be arbitrary, and let $V=U^{-1}$.  Since $\alpha$ is $G$-invariant, there exists some $W \in \cU$ such that, for all $(y,z) \in V$ and $t \in G$, we have
\begin{equation}\label{eq33}
(\alpha(t,y), \alpha(t,z)) \in V \,.
\end{equation}
Now, if $(\alpha(t,x), \alpha(s,x)) \in V$, then we have by \eqref{eq33}
\[
(\alpha(-s-t, \alpha(t,x)),\alpha(-s-t, \alpha(s,x))) \in V=U^{-1}
\]
and hence
\[
(\alpha(-t,x),\alpha(-s,x)) \in U \,.
\]
This show $\ominus : G \to G$ is uniformly continuous with respect to $\cU$.

\smallskip

Now, $O(x)$ is a dense subset of the complete set $H_x^{\cU}$.
Therefore, by \cite[Ch. II, Prop.13]{BOU}, $H_x^{\cU}$
is -what is known as - the Hausdorff completion of $O(x)$. Since
$(O(x), \oplus, \cU)$ is a topological Abelian group, we then
infer by \cite[Ch. III, Thm. I, Thm.~II]{BOU} that $H_x^{\cU}$ is an Abelian group and the inclusion $i:O(x) \to H_x^{\cU}$ is a group homomorphism.

\medskip

\noindent (b) It is obvious that $F' : G\longrightarrow O(x)$, $t\mapsto
\alpha(t,x)$ is a group homomorphism as is the inclusion
\[
i : O(x)\longrightarrow H_x^{\cU} \,, \qquad y\mapsto y \,.
\]
Thus, we have the following group homomorphisms:
\[
G \stackrel{F'}{\longrightarrow} O(x) \stackrel{i}{\hookrightarrow}
H_x^{\cU} \,.
\]
Since the inclusion $i$ is uniformly continuous, to complete the
proof we need to show that $F'$ is uniformly continuous.

Let $U \in \cU$ be arbitrary. Since $\cU$ is $G$-invariant, there exists some $U' \in \cU$ such that
\begin{equation}\label{eq11}
\{ (\alpha(t,x), \alpha(t,y)) \,:\, t \in G,\, (x,y) \in U' \} \subseteq U \,.
\end{equation}

Next, since $\alpha$ is continuous, it is equicontinuous on
$H_x^{\cU}$ by Lemma \ref{lem:7}. Thus, there exists an open set $O\subseteq G$ containing $0$ and $V \in \cU$ such that
\begin{equation}\label{eq233}
\{ (\alpha(s,x), \alpha (t,y)) \,:\, s,t \in O ,\, (x,y) \in V \cap
H_x^{\cU} \times H_x^{\cU} \} \subseteq U' \,.
\end{equation}
Now, let $s,t \in G$ be so that $s-t \in U$. Then , by \eqref{eq233}
we have $(\alpha(s-t,x), \alpha (0,x)) \in U'$ and hence, by
\eqref{eq11} we have
\[
(F'(t), F'(s))=  (\alpha(t,\alpha(s-t,x)), \alpha(t,\alpha(0,x)))
\in U \,.
\]
This shows that $F'$ is uniformly continuous, and completes the
proof.
\end{proof}

\begin{remark}
(a) Let us emphasize that we do not need compactness (i.e. almost
periodicity of any form)  of $H_x^{\cU}$ to obtain the group
structure on $H_x^{\cU}$.  To illustrate this we include the
following example. Consider the translation action of $\R$ on the
vector space $\Cu(\R)$ of uniformly continuous and bounded functions
equipped with the supremum norm $ \| \cdot \|_\infty$. For any
nontrivial $f\in \Cu(\R)$ with $\lim_{x\to \pm\infty} f(x) =0$ is
easy to see that $H_f$ is isomorphic to $\R$, under the canonical
isomorphism $t\in G\leftrightarrow T_tf \in H_f$.

\medskip

\noindent (b) Since $F' : G\longrightarrow O(x)$  is onto, by the fundamental
theorem of group isomorphism, we have
\[
O(x) \simeq G/\mbox{Per(x)} \,.
\]
\end{remark}

In the case that the hull is compact we can even say more.

\begin{coro}\label{thm-bochenr-conseq}
Let $G$ act on a uniform topological Hausdorff space $X$ such that
the uniformity $\cU$ is $G$-invariant and $\alpha$ is continuous. If
$x \in X$ is Bochner-type almost periodic, then  $H^{\cU}_x$ is a
compact Abelian group with the addition $\oplus$ induced by
\[
\alpha(t,x) \oplus \alpha(s,x):= \alpha(s+t,x) \,
\]
and $F : G\longrightarrow H_x^{\cU}$, $t\mapsto \alpha(t,x)$, is a
continuous group homomorphism with dense range.
\end{coro}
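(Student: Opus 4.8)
The plan is to obtain this corollary as an immediate consequence of the preceding proposition, the only point needing verification being that Bochner-type almost periodicity already guarantees the completeness hypothesis appearing there.

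First I would unwind the definitions: saying that $x$ is Bochner-type almost periodic means exactly that $H_x^{\cU}$ is compact in $(X,\tau_{\cU})$. The key step is then the observation that a compact subset of a uniform Hausdorff space is complete in its inherited uniformity; this is precisely the characterization of compactness recorded in the remark following Theorem~\ref{thm-tot-bound} (compact $\Leftrightarrow$ totally bounded and complete). Thus $H_x^{\cU}$ is complete with respect to the uniformity $\cU_x$ it inherits from $\cU$.

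With completeness in hand, all hypotheses of the preceding proposition are satisfied --- $\cU$ is $G$-invariant, $\alpha$ is continuous, and $H_x^{\cU}$ is complete --- so I may apply it verbatim. It yields that $H_x^{\cU}$ is a topological Abelian group under the addition $\oplus$ induced by $\alpha(t,x)\oplus\alpha(s,x):=\alpha(s+t,x)$, and that the map $F\colon G\to H_x^{\cU}$, $t\mapsto\alpha(t,x)$, is a uniformly continuous group homomorphism with dense range; in particular $F$ is continuous. Combining this group structure with the standing hypothesis that $H_x^{\cU}$ is compact, I conclude that $H_x^{\cU}$ is a compact Abelian group, which is exactly the assertion.

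I expect no genuine obstacle in this argument: the substantive work is carried entirely by the preceding proposition, and the single bridging fact --- that compactness of the hull forces its completeness, thereby converting the Bochner-type hypothesis into the standing assumption of that proposition --- follows directly from the compactness characterization, with the uniform continuity of $F$ trivially downgrading to continuity.
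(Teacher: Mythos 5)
Your proposal is correct and follows exactly the paper's own argument: Bochner-type almost periodicity gives compactness of $H_x^{\cU}$, compactness in a uniform Hausdorff space implies completeness, and the preceding proposition then supplies the topological Abelian group structure and the continuous homomorphism $F$ with dense range. The only difference is that you spell out the compact-implies-complete step via the remark after Theorem~\ref{thm-tot-bound}, which the paper leaves implicit.
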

\begin{proof}
Since $x$ is Bochner-type almost periodic, $H_x^{\cU}$ is compact.
Hence, it is also  complete. Now, the statement follows from
Proposition~ref{p1}.
\end{proof}

The previous corollary can be understood
in terms of the \textbf{Bohr-compactification} $G_{\mathsf{b}}$ of
$G$. This is the (unique) compact group with the universal property
that there exists a continuous group homomorphism $\iota_b :
G\longrightarrow G_{\mathsf{b}}$ such that, for any continuous group
homomorphism $\psi : G\longrightarrow H$ into a compact group $H$,
there exists a unique mapping $\Psi : G_{\mathsf{b}}\longrightarrow H$ with
$\psi = \Psi \circ \iota$ (see for example \cite[Prop.~4.2.6]{MoSt}
for details).

\begin{coro}\label{coro-bochner-bohr-compactification}
Let $G$ act on a uniform topological Hausdorff space $X$ such that
the uniformity $\cU$ is $G$-invariant and $\alpha$ is continuous. If
$H^{\cU}_x$ is a compact Abelian group with the addition $\oplus$
induced by
\[
\alpha(t,x) \oplus \alpha(s,x):= \alpha(s+t,x) \,,
\]
then there
exists a surjective continuous group homomorphism $\Psi :
G_{\mathsf{b}}\longrightarrow H_x^{\cU}$ such that
\[
\alpha(t,x) = \Psi(\iota_{\mathsf{b}}(t))
\]
holds for all $t\in G$.
\end{coro}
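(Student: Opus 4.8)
The plan is to apply the universal property of the Bohr compactification to the orbit map $F \colon G \to H_x^{\cU}$, $t \mapsto \alpha(t,x)$, and then to extract surjectivity from the density of the orbit together with the compactness of $G_{\mathsf{b}}$.

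First I would record that $F$ is a continuous group homomorphism with dense range. Since $H_x^{\cU}$ is compact, the element $x$ is Bochner-type almost periodic in the sense of Definition~\ref{def:ap-type}(b), and the standing hypotheses ($\cU$ is $G$-invariant, $\alpha$ is continuous) are exactly those of Corollary~\ref{thm-bochenr-conseq}. That corollary tells us that $F$ is a continuous group homomorphism from $G$ into the compact Abelian group $H_x^{\cU}$ whose image $O(x)$ is dense in $H_x^{\cU}$.

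Next I would invoke the universal property of the Bohr compactification (as recalled just before the statement, see \cite{MoSt}): applied to the continuous group homomorphism $F$ into the compact group $H_x^{\cU}$, it produces a unique continuous group homomorphism $\Psi \colon G_{\mathsf{b}} \longrightarrow H_x^{\cU}$ satisfying $F = \Psi \circ \iota_{\mathsf{b}}$. In particular, for every $t \in G$ we obtain
\[
\alpha(t,x) = F(t) = \Psi(\iota_{\mathsf{b}}(t)) \,,
\]
which is the asserted factorization. It remains only to check surjectivity. Here I would argue that $\Psi(G_{\mathsf{b}})$ is the continuous image of a compact set, hence compact, hence closed in the Hausdorff space $H_x^{\cU}$; on the other hand $\Psi(G_{\mathsf{b}}) \supseteq \Psi(\iota_{\mathsf{b}}(G)) = F(G) = O(x)$, which is dense. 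A closed set containing a dense subset is the whole space, so $\Psi(G_{\mathsf{b}}) = H_x^{\cU}$ and $\Psi$ is onto.

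I do not expect a serious obstacle here: the statement is essentially a repackaging of Corollary~\ref{thm-bochenr-conseq} through the universal property. The only point demanding a little care is that the map supplied by the universal property be genuinely a continuous group homomorphism rather than a mere set map; this is part of the standard formulation of the universal property, and should one only dispose of the factorization as a set map, continuity together with the density of $\iota_{\mathsf{b}}(G)$ in $G_{\mathsf{b}}$ and the fact that $F$ is a homomorphism upgrade $\Psi$ to a homomorphism.
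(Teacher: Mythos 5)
Your proposal is correct and follows essentially the same route as the paper: apply the universal property of $G_{\mathsf{b}}$ to the continuous homomorphism $F(t)=\alpha(t,x)$ (whose continuity and dense range come from Corollary~\ref{thm-bochenr-conseq}), then deduce surjectivity because $\Psi(G_{\mathsf{b}})$ is compact, hence closed, and contains the dense orbit. Your write-up is in fact slightly cleaner than the paper's, which conflates $F$ and $\Psi$ notationally in the surjectivity step, but the argument is the same.
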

\begin{proof}
Since $H_x^{\cU}$ is a compact Abelian group, by the universal
property there exists a continuous group homomorphism $\Psi :
G_{\mathsf{b}} \to H^{\cU}_x$ such that
\[
\alpha(t,x)=F(t)= \Psi( i_{\mathsf{b}}(t)) \quad \mbox{ for all } t \in
G \,.
\]
Since $F$ has dense range, so does $\Psi$. Furthermore, since
$G_{\mathsf{b}}$ is compact, so is $F(G_{\mathsf{b}})$. It follows
that the range of $F$ is compact and hence closed. In particular,
$F$ is surjective.
\end{proof}

Combination of the previous results gives our next main result.

\begin{theorem}[Main result - II]
\label{thm:apequiv} Let $G$ act
on a complete uniform Hausdorff topological $X$ space such that the
action is continuous and the uniformity $\cU$ is $G$-invariant.
Then, for $x \in X$, the following assertions are equivalent.
\begin{itemize}
  \item[(i)] $x$ is Bohr-type almost periodic.
  \item[(ii)] $x$ is Bochner-type almost periodic.
  \item[(iii)] $x$ is pseudo Bochner-type almost periodic.
  \item[(iv)] $H_x^{\cU}$ is a compact Abelian group with the group operation $\oplus$ induced by
\[
\alpha(t,x) \oplus \alpha(s,x):= \alpha(s+t,x) \,.
\]
\item[(v)] There exists a continuous function $\Psi : G_{\mathsf{b}} \to H^{\cU}_x$ such that
\[
\alpha(t,x)= \Psi( i_{\mathsf{b}}(t)) \quad \mbox{ for all } t \in G
\,.
\]
\end{itemize}

Moreover, in this case $F(t)=\alpha(t,x)$ define a group homomorphism with dense range $F : G \to H^{\cU}_x$, and $\Psi$ is an onto group homomorphism.
\end{theorem}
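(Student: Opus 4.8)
The plan is to deduce this omnibus statement by bolting together the results already established, so that almost all of the work has been done. I would organise the proof around the chain of implications (i) $\Leftrightarrow$ (ii) $\Leftrightarrow$ (iii), (ii) $\Rightarrow$ (iv) $\Rightarrow$ (v) $\Rightarrow$ (ii); once this loop is closed, every pair of assertions is equivalent.

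The equivalence of (i), (ii) and (iii) is precisely Theorem~\ref{ap-equiv}, whose hypotheses---completeness of $X$, continuity of $\alpha$, and $G$-invariance of $\cU$---are exactly ours, so nothing new is required there. The implication (ii) $\Rightarrow$ (iv) is Corollary~\ref{thm-bochenr-conseq}: as soon as $x$ is Bochner-type almost periodic, $H_x^{\cU}$ is compact and that corollary equips it with the compact Abelian group structure carried by $\oplus$, while also yielding the ``moreover'' claim that $F(t)=\alpha(t,x)$ is a continuous homomorphism with dense range. The implication (iv) $\Rightarrow$ (v) is Corollary~\ref{coro-bochner-bohr-compactification}, which constructs a surjective continuous homomorphism $\Psi : G_{\mathsf{b}} \to H_x^{\cU}$ factoring the orbit map through $\iota_{\mathsf{b}}$; this also records the remaining ``moreover'' claim that $\Psi$ is onto.

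The one implication not yet available is (v) $\Rightarrow$ (ii), and this is where the only genuine argument lives. Assume $\Psi : G_{\mathsf{b}} \to H_x^{\cU}$ is continuous with $\alpha(t,x)=\Psi(\iota_{\mathsf{b}}(t))$ for all $t\in G$. Since $G_{\mathsf{b}}$ is compact and $\Psi$ continuous, $\Psi(G_{\mathsf{b}})$ is compact, hence closed in the Hausdorff space $(X,\tau_{\cU})$. Because $\alpha(t,x)=\Psi(\iota_{\mathsf{b}}(t))$, the orbit $O(x)$ is contained in $\Psi(G_{\mathsf{b}})$, so its closure satisfies $H_x^{\cU}\subseteq \Psi(G_{\mathsf{b}})$. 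A closed subset of a compact set is compact, whence $H_x^{\cU}$ is compact and $x$ is Bochner-type almost periodic. If one additionally invokes the density of $\iota_{\mathsf{b}}(G)$ in $G_{\mathsf{b}}$, the reverse inclusion $\Psi(G_{\mathsf{b}})\subseteq \overline{O(x)}=H_x^{\cU}$ also holds, pinning down $H_x^{\cU}=\Psi(G_{\mathsf{b}})$ and confirming that this $\Psi$ is onto.

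I expect the main---indeed essentially the only---obstacle to be this step (v) $\Rightarrow$ (ii), since everything else is a direct citation of prior results. Even here the difficulty is mild: the content is the standard fact that the continuous image of a compact group is compact, together with closedness in a Hausdorff space, and the density of the Bohr map $\iota_{\mathsf{b}}$ if one wants the precise identification $H_x^{\cU}=\Psi(G_{\mathsf{b}})$. The care that remains is mostly in checking that the hypotheses of Theorem~\ref{ap-equiv} and of the two corollaries are all met simultaneously, which they are.
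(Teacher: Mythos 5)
Your proposal is correct and follows essentially the same route as the paper: the equivalence of (i)--(iii) via Theorem~\ref{ap-equiv}, (ii)$\implies$(iv) via Corollary~\ref{thm-bochenr-conseq}, (iv)$\implies$(v) via Corollary~\ref{coro-bochner-bohr-compactification}, and the same compactness argument for (v)$\implies$(ii). The only cosmetic difference is that for the identification $H_x^{\cU}=\Psi(G_{\mathsf{b}})$ the paper simply uses that $\Psi$ maps into $H_x^{\cU}$ by hypothesis, whereas you invoke density of $\iota_{\mathsf{b}}(G)$ in $G_{\mathsf{b}}$; both work.
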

\begin{proof}
The equivalences between (i), (ii) and (iii) follow from
Theorem~\ref{ap-equiv}. The implication (ii)$\implies$(iv)
follows from Corollary~\ref{thm-bochenr-conseq}. This Corollary
gives also that $F : G\longrightarrow H_x^{\cU}$, $t\mapsto
\alpha(t,x)$, is a continuous group homomorphism with dense range.
The implication (iv)$\implies$(v) follows from Corollary
\ref{coro-bochner-bohr-compactification}. Next, we will show
(v)$\Longrightarrow$(ii):
 Since $\Psi$ is continuous, and $G_{\mathsf{b}}$ is compact, so is
$\Psi(G_{\mathsf{b}})=:K$. Moreover, by  (v) we also have
$\alpha(t,x)= \Psi( i_{\mathsf{b}}(t))$ for all $t\in G$, which
implies $O(x)\subset K$. As the compact $K$ is closed we infer
\[
H_x^{\cU} = \overline{O(x)}\subseteq K \,.
\]
Thus, $H_x^{\cU}$ must be compact as it is a closed subset of a
compact set. This shows (ii). Moreover, as $K$ is contained in
$H_x^{\cU}$ (by definition of $\Psi$) we have even

\[
H_x^{\cU} \subseteq  K = \Psi(G_{\mathsf{b}}) \subseteq H_x^{\cU}
\]
giving $H_x^{\cU} =K$ and hence $\Psi$ is onto. This completes the proof.
\end{proof}

\section{The mixed uniformity}\label{mix}
In the preceding discussion and most notably in Lemma \ref{lem:7} we
have seen that the continuity of $\alpha$ is equivalent to the
equicontinuity of $\alpha$ on  orbit closures (provided  $\cU$ is
$G$-invariant). This - so to say - booster of continuity  has been a
main tool in our considerations. Now, it may happen that $\alpha$ is
not continuous. However,  even if we lack continuity, by a simple
mixing process we can make $\alpha$ equicontinuous, without changing
the Bohr-type almost periodicity, or the other basic properties of
the uniformity. This is discussed in this section.

Note, however, that since the equivalence between Bohr-type and
pseudo Bochner-type almost periodicity relies on equicontinuity,
this mixing process can actually change pseudo-Bochner and Bochner
type almost periodicity, and we will see such an example in the next
section.

\medskip

Let $G$ be a LCAG and let $\cU$ be a $G$-invariant uniformity on
some set $X$ and $\alpha : G \times X \to X$ any group action.
Denote by $\cO$ the set of all open sets in $G$ containing $0$.

Now, for $U \in \cU$ and $O \in \cO$ define
\[
V[O,U]:= \{ (\alpha(t,x), y) \,:\, t \in O,\, (x,y) \in U \} \,,
\]
and set
\[
\cU_{\mathsf{mix}} := \{ V \subseteq X \times X : \mbox{ there
exists } U \in \cU, O \in \cO \mbox{ such that } V[O,U] \subseteq V \}
\,.
\]
Let us first show that $\cU_{\mathsf{mix}}$ is indeed a uniformity on $X$.

\begin{prop}[$\cU_{\mathsf{mix}}$ as uniformity]
If $\cU$ is $G$-invariant, then $\cU_{\mathsf{mix}}$ is a uniformity
on $X$.
\end{prop}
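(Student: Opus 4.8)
The plan is to verify directly the five axioms of a uniformity for $\cU_{\mathsf{mix}}$. Since every $V\in\cU_{\mathsf{mix}}$ contains some generating set $V[O,U]$ and the axioms are all of the form ``$V$ contains/relates to sets we control'', it suffices to work with the generators $V[O,U]$ and then transfer upward. I would first record the convenient reformulation
\[
(a,y)\in V[O,U]\quad\Longleftrightarrow\quad \exists\,t\in O:\ (\alpha(-t,a),y)\in U,
\]
which is immediate from $a=\alpha(t,x)\iff x=\alpha(-t,a)$, together with the evident monotonicity $O\subseteq\tilde O,\ U\subseteq\tilde U\Rightarrow V[O,U]\subseteq V[\tilde O,\tilde U]$. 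With these in hand the three ``easy'' axioms are quick: the diagonal lies in $V[O,U]$ by taking $t=0$ (using $0\in O$ and $\Delta\subseteq U$); upward closure is built into the definition; and closure under finite intersections follows from monotonicity together with $\cO,\cU$ being stable under intersection (given $V[O_i,U_i]\subseteq V_i$, set $O=O_1\cap O_2\in\cO$, $U=U_1\cap U_2\in\cU$ and note $V[O,U]\subseteq V_1\cap V_2$). Nonemptiness is clear since $X\times X\in\cU_{\mathsf{mix}}$.

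The substance is in the composition axiom. Given $V[O,U]\subseteq V$, I would choose $U_2\in\cU$ with $U_2\circ U_2\subseteq U$, then use $G$-invariance to pick $U'\in\cU$ with $\alpha(r,U')\subseteq U_2$ for all $r\in G$, and use continuity of addition in the topological group $G$ (at $(0,0)$) to pick $O_2\in\cO$ with $O_2+O_2\subseteq O$ (hence also $O_2\subseteq O$). Setting $W:=V[O_2,U']\in\cU_{\mathsf{mix}}$, I claim $W\circ W\subseteq V[O,U]\subseteq V$. Indeed, if $(a,c)\in W\circ W$ there is $b$ with $(a,b),(b,c)\in W$, so by the reformulation there are $s_1,s_2\in O_2$ with $(\alpha(-s_1,a),b)\in U'$ and $(\alpha(-s_2,b),c)\in U'$. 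Applying $\alpha(-s_2,\cdot)$ to the first relation and using $\alpha(-s_2,U')\subseteq U_2$ gives $(\alpha(-(s_1+s_2),a),\alpha(-s_2,b))\in U_2$; combining with $(\alpha(-s_2,b),c)\in U'\subseteq U_2$ yields $(\alpha(-(s_1+s_2),a),c)\in U_2\circ U_2\subseteq U$, and since $s_1+s_2\in O$ the reformulation gives $(a,c)\in V[O,U]$.

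For the symmetry axiom I would show $V[O,U]^{-1}\supseteq V[-O,U']$ for a suitable $U'$, where $-O:=\{-t:t\in O\}\in\cO$ (negation is a homeomorphism fixing $0$). Here I apply $G$-invariance to the entourage $U^{-1}\in\cU$ to obtain $U'\in\cU$ with $\alpha(r,U')\subseteq U^{-1}$ for all $r$. If $(c,d)\in V[-O,U']$, there is $s\in -O$ with $(\alpha(-s,c),d)\in U'$; applying $\alpha(s,\cdot)$ gives $(c,\alpha(s,d))\in U^{-1}$, i.e.\ $(\alpha(s,d),c)\in U$. Writing $t=-s\in O$, this reads $(\alpha(-t,d),c)\in U$, so $(d,c)\in V[O,U]$, i.e.\ $(c,d)\in V[O,U]^{-1}$. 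Passing upward, $V^{-1}\supseteq V[O,U]^{-1}\supseteq V[-O,U']$, so $V^{-1}\in\cU_{\mathsf{mix}}$.

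The main obstacle is precisely the composition and symmetry axioms, where the definition's asymmetry—the group acts only on the first coordinate—must be reconciled. The crux is that $G$-invariance is exactly the tool that lets one move the action freely between the two coordinates (turning $\alpha(-s,c)$-type statements into statements about $c$), while continuity of addition in $G$ supplies the splitting $O_2+O_2\subseteq O$ needed to absorb two successive translations in the composition step. Once the reformulation and these two choices are in place, the verifications are routine; I would expect the only care needed is bookkeeping of signs (the appearance of $-O$ and $U^{-1}$) in the symmetry argument.
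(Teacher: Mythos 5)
Your proof is correct and follows essentially the same route as the paper: direct verification of the axioms, with the composition axiom handled by choosing $U_2\circ U_2\subseteq U$, an invariance entourage $U'$, and a splitting $O_2+O_2\subseteq O$, and the symmetry axiom handled by a generator of the form $V[-O,\cdot]$. The only cosmetic differences are your explicit reformulation $(a,y)\in V[O,U]\iff\exists\,t\in O:(\alpha(-t,a),y)\in U$ (which the paper does inline) and that you apply $G$-invariance to $U^{-1}$ where the paper applies it to $U$ and then takes $(U')^{-1}$.
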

\begin{proof} We have to show that the five points defining a
uniformity are satisfied. It is rather straightforward to show this.
For the convenience of the reader we include a proof.

\begin{itemize}
  \item{} Let $V \in \cU_{\mathsf{mix}}$. Then, there exists some $O \in \cO$ and $U \in \cU$ so that $V[O,U] \subseteq V$. Then, for all $x \in X$ we have $0 \in O$ and $(x,x) \in U$ and hence
\[
(x,x) = (\alpha(0,x),x) \in V[O,U] \subseteq V \,.
\]
This shows that $\Delta \subseteq V$.
\item{} Let $V \in \cU_{\mathsf{mix}}$ and $V \subseteq W$. Since $V \in \cU_{\mathsf{mix}}$, there exists some $O \in \cO$ and $U \in \cU$ such that $V[O,U] \subseteq V \subseteq W$. This shows that $W \in \cU_{\mathsf{mix}}$.
\item{} Let $V_1,V_2 \in \cU$. Then, there exists some $O_1, O_2 \in \cO$ and $U_1, U_2 \in \cU$ such that $V[O_j,U_j] \subseteq V_j$.
 We have $O:= O_1 \cap O_2 \in \cO, U=U_1 \cap U_2 \in \cU$. Moreover, it follows immediately from the definition that for $1 \leq j \leq 2$ we have
\[
V[O,U] \subseteq V[O_j,U_j] \,.
\]
Therefore,
\[
V[O,U] \subseteq V[O_1,U_1] \cap V[O_2,U_2] \subseteq V_1 \cap V_2 \,,
\]
which shows that $V_1 \cap V_2 \in \cU_{\mathsf{mix}}$.
\item{} Let $V \in \cU_{\mathsf{mix}}$. Then, there exists some $O \in \cO$ and $U \in \cU$ so that $V[O,U] \subseteq V$.
Then, since $O \in \cO$ we can find some $O' \in \cO$ so that $O'+O' \subseteq O$. Since $U \in \cU$, there exists some $U_1 \in \cU$ such that $U_1 \circ U_1 \subseteq U$.

By the $G$-invariance of $\cU$, there exists some $U' \in \cU$ such
that
\begin{equation}\label{eq4}
\{ (\alpha(t,x), \alpha(t,y)) : t \in G , (x,y) \in U' \} \subseteq
U_1 \,.
\end{equation}
Note that $U'$ is contained in $U$ (as we can set $t =0$)  and set
$V':= V[O',U']$.

Let $(x,y) \in   V'\circ V'$. Then, there exists some $z \in X$ such that $(x,z), (z,y) \in V'=V[O',U']$. Therefore, there exists some $x',z' \in X$ and $t,s \in O$ such that
\[
(x',z), (z',y) \in U' \,, \qquad
  x =\alpha(t,x') \qquad \mbox{ and } \qquad
  z= \alpha(s,z') \,.
\]
Note here that $z'=\alpha(-s,\alpha(s,z'))=\alpha(-s,z)$ and
$x'=\alpha(-t,x)$ hold.
Therefore, by \eqref{eq4} we have $(\alpha(-t-s,x), \alpha(-s,z)) \in U' \subseteq U_1$ and $(\alpha(-s,z),y) \in U'\subseteq U_1$ and hence $(\alpha(-t-s,x),y) \in U_1 \circ U_1 \subseteq U$.
This gives
\[
(x,y) = (\alpha(t+s,\alpha(-t-s,x)),y) \in V[O,U] \subseteq V \,.
\]
Hence, we obtain $V' \circ V' \subseteq V$.
\item{} Let $V \in \cU_{\mathsf{mix}}$. Then, there exists some $O \in \cO$ and $U \in \cU$ so that $V[O,U] \subseteq V$.
Next, since $U \in \cU$ and the $\cU$ is $G$-invariant, there exists some $U' \in \cU$ such that
\[
\{ (\alpha(t,x), \alpha(t,y))\, :\, t \in G ,\, (x,y) \in U' \} \subseteq U \,.
\]

Now, let $(x,y) \in V[-O, (U')^{-1}]$. Then, there exists some $t \in -O$ and $x' \in X$ such that $(x',y) \in (U')^{-1}$ and $x= \alpha(t,x')$.
Next, $(x',y) \in (U')^{-1}$ gives $(y,x') \in U'$ and hence $(\alpha(t,y), x) \in U$. Therefore, since $-t \in O$ we have
\[
(y,x)=(\alpha(-t, \alpha(t,y)), x) \in V[O,U] \subseteq V
\]
and hence $(x,y) \in V^{-1}$.
This proves that
\[
V[-O, (U')^{-1}] \subseteq V^{-1}
\]
and hence $V^{-1} \in \cU_{\mathsf{mix}}$.
\end{itemize}
\end{proof}

\begin{definition}[Mixed uniformity]
Let the LCAG $G$ act on the
uniform space  $(X,\cU)$ such that $\cU$ is a $G$-invariant. Then,
$\cU_{\mathsf{mix}}$ is called the \textbf{mixed uniformity}
induced from the action of $G$.
\end{definition}

\begin{prop}[Characterization of $\cU_{\mathsf{mix}}$]
Let the LCAG $G$ act on the uniform space  $(X,\cU)$ such that $\cU$
is $G$-invariant. Then, the following statements hold.
\begin{itemize}
\item[(a)] The action $\alpha$ is equicontinuous on $(X, \cU_{\mathsf{mix}})$ and
   $\cU$ is finer than $\cU_{\mathsf{mix}}$, that is $\cU_{\mathsf{mix}} \subseteq
   \cU$ holds.
\item[(b)] $\cU$ is the finest uniformity satisfying (a), i.e. whenever
  $\cU'$ is a uniformity such that $\cU$ is finer than $\cU'$
and
  $\alpha$ is equicontinuous on $(X,\cU')$, then
$\cU'\subseteq \cU_{\mathsf{mix}}$ holds.
\end{itemize}
In particular $\alpha$ is equicontinuous on $(X,\cU)$ if and only if
$\cU = \cU_{\mathsf{mix}}$.

\end{prop}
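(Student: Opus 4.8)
The plan is to establish (a) and (b) by direct verification against the definition of $\cU_{\mathsf{mix}}$, using throughout the equivalent characterizations of equicontinuity collected in Lemma~\ref{lem6}. A preliminary remark I would make is that the equivalences (i)--(v) of Lemma~\ref{lem6} are purely algebraic consequences of the group-action axioms and do not actually use continuity of $\alpha$; hence, via Definition~\ref{def:equi}, I may apply and check whichever of these conditions is convenient for \emph{any} uniformity on $X$, in particular for $\cU_{\mathsf{mix}}$ and for the auxiliary uniformity $\cU'$ appearing in (b).

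For the containment $\cU_{\mathsf{mix}} \subseteq \cU$ in (a): given $V \in \cU_{\mathsf{mix}}$, I pick $U \in \cU$ and $O \in \cO$ with $V[O,U] \subseteq V$, and then observe that setting $t=0 \in O$ in the definition of $V[O,U]$ yields $U \subseteq V[O,U] \subseteq V$, so $V \in \cU$ by upward closure. For equicontinuity of $\alpha$ on $(X,\cU_{\mathsf{mix}})$ I would verify condition (iii) of Lemma~\ref{lem6}: for $V \in \cU_{\mathsf{mix}}$ choose $U,O$ as above; then for every $s \in O$ and $x \in X$ we have $(x,x) \in \Delta \subseteq U$, whence $(\alpha(s,x),x) \in V[O,U] \subseteq V$. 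Thus the same open set $O \ni 0$ witnesses (iii), and $\alpha$ is equicontinuous on $(X,\cU_{\mathsf{mix}})$ by Definition~\ref{def:equi}.

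For (b), I let $\cU'$ be any uniformity with $\cU' \subseteq \cU$ on which $\alpha$ is equicontinuous, and fix $V \in \cU'$. Applying condition (ii) of Lemma~\ref{lem6} to the equicontinuous action on $(X,\cU')$ produces an open $O \ni 0$ and some $W \in \cU'$ with $\{(\alpha(s,x),y) : s \in O,\ (x,y) \in W\} = V[O,W] \subseteq V$. Since $\cU' \subseteq \cU$ forces $W \in \cU$, the pair $U := W$ and $O$ exhibits $V$ as an element of $\cU_{\mathsf{mix}}$. Hence $\cU' \subseteq \cU_{\mathsf{mix}}$, which is precisely the asserted maximality of $\cU_{\mathsf{mix}}$ among uniformities coarser than $\cU$ for which $\alpha$ is equicontinuous.

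Finally, the ``in particular'' clause follows by combining the two parts: if $\cU = \cU_{\mathsf{mix}}$, equicontinuity is immediate from (a); conversely, if $\alpha$ is equicontinuous on $(X,\cU)$, then applying (b) with $\cU' = \cU$ gives $\cU \subseteq \cU_{\mathsf{mix}}$, which together with $\cU_{\mathsf{mix}} \subseteq \cU$ from (a) forces equality. I do not expect a genuine obstacle here; the only points needing care are the bookkeeping around the direction of ``finer'' (so that the entourage $W$ supplied by equicontinuity really lands in $\cU$) and the observation that the equivalences in Lemma~\ref{lem6} are available without assuming $\alpha$ continuous.
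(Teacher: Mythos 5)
Your proof is correct and follows essentially the same route as the paper: part (a) via condition (iii) of Lemma~\ref{lem6} together with the $t=0$ observation and upward closure of $\cU$, part (b) via condition (ii) of Lemma~\ref{lem6} applied to $(X,\cU')$, and the final clause by combining the two parts. Your preliminary remark that the equivalences of Lemma~\ref{lem6} hold without assuming continuity of $\alpha$ is a worthwhile addition rather than a digression, since the paper applies that lemma to the (possibly non-continuous) action on $(X,\cU_{\mathsf{mix}})$ and on $(X,\cU')$ without comment, and your observation is exactly what legitimizes this.
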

\begin{proof} The last statement is immediate from (a) and  (b).

\medskip

\noindent (a) From Lemma~\ref{lem6} (iii) it follows easily that
$\alpha$ is equicontinuous with respect to $(X,\cU_{\mathsf{mix}})$.
To show that $\cU$ is finer that $\cU_{\mathsf{mix}}$
we consider an arbitrary  $V \in \cU_{\mathsf{mix}}$. Then, there
exists some $U \in \cU$ and $O \in \cO$ such that
\[
V[O,U] \subseteq V \,.
\]
Now, clearly $U \subseteq V[O,U]$ holds and we get $U \subseteq V$.
Since $\cU$ is a uniformity, we get $V \in \cU$ and therefore
$\cU_{\mathsf{mix}} \subseteq \cU \,. $ follows as $V\in
\cU_{\mathsf{mix}}$ was arbitrary.

\medskip

\noindent (b) Let $\cU'$ with $\cU'\subseteq \cU$ be given such that $\alpha$ is
equicontinuous on $(X,\cU')$.  Let $V \in \cU'$ be arbitrary. Since
$\alpha$ is equicontinuous on $(X,\cU')$, by Lemma~\ref{lem6} (ii),
there exists some $O \in \cO$ and $U \in \cU$ such that
\[
V[O,U] \subseteq V \,.
\]
As $\cU_{\mathsf{mix}}$ is a uniformity it is upward closed and $V
\in \cU_{\mathsf{mix}}$ follows. As $V\in \cU'$ was arbitrary, the
desired inclusion $\cU'\subseteq \cU_{\mathsf{mix}}$ follows.
\end{proof}

We now turn to the question of metrizability of
$\cU_{\mathsf{mix}}$.

\begin{prop}[Metrizability of $\cU_{\mathsf{mix}}$] If $\cU$ and the topology on $G$ are metrisable, then $\cU_{\mathsf{mix}}$ is metrisable.
\end{prop}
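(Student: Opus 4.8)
The plan is to exhibit an explicit countable basis of entourages for $\cU_{\mathsf{mix}}$ and then invoke the metrizability criterion, Theorem~\ref{thm-char-metrizability}. Since $\cU$ is metrisable, that theorem (in the direction ``metrisable $\Rightarrow$ countable basis'') furnishes a countable basis $\{U_n : n \in \N\}$ of $\cU$. Since the topology on $G$ is metrisable, it is first countable, so $0 \in G$ has a countable neighbourhood basis; I fix a countable family $\{O_m : m \in \N\} \subseteq \cO$ such that every $O \in \cO$ contains some $O_m$. I would then propose
\[
\cB := \{ V[O_m, U_n] : m, n \in \N \}
\]
as a basis of entourages for $\cU_{\mathsf{mix}}$, which is countable as an image of $\N \times \N$.

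The observation that makes this work is the (routine) monotonicity of the construction $V[\cdot,\cdot]$: if $O' \subseteq O$ and $U' \subseteq U$, then $V[O', U'] \subseteq V[O, U]$, since any pair $(\alpha(t,x), y)$ with $t \in O'$ and $(x,y) \in U'$ also satisfies $t \in O$ and $(x,y) \in U$. Each $V[O_m, U_n]$ lies in $\cU_{\mathsf{mix}}$ by definition. Conversely, given an arbitrary $V \in \cU_{\mathsf{mix}}$, I choose $U \in \cU$ and $O \in \cO$ with $V[O,U] \subseteq V$; picking $n$ with $U_n \subseteq U$ and $m$ with $O_m \subseteq O$, monotonicity yields $V[O_m, U_n] \subseteq V[O, U] \subseteq V$. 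Hence $\cB$ is a countable basis of entourages for $\cU_{\mathsf{mix}}$, and Theorem~\ref{thm-char-metrizability} then produces a metric inducing $\cU_{\mathsf{mix}}$.

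The point requiring genuine care --- and which I expect to be the main obstacle --- is that Theorem~\ref{thm-char-metrizability} is stated for \emph{Hausdorff} uniformities, whereas $\cU_{\mathsf{mix}}$ is only coarser than the Hausdorff uniformity $\cU$ and so need not be separated when $\alpha$ is discontinuous. One computes that $(a,b) \in \bigcap_{V \in \cU_{\mathsf{mix}}} V$ holds precisely when, for every $U \in \cU$ and every $O \in \cO$, there is $t \in O$ with $(\alpha(-t,a), b) \in U$, i.e. when $b$ is a cluster point of the partial orbit $\{\alpha(-t,a) : t \to 0\}$. If $\alpha$ is continuous this forces $b=a$: choosing $W$ with $W \circ W \subseteq U$ and $O$ with $(a,\alpha(-t,a)) \in W$ for $t \in O$, the cluster condition gives some such $t$ with $(\alpha(-t,a),b) \in W$, whence $(a,b) \in W \circ W \subseteq U$ for all $U$, so $(a,b) \in \Delta$. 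Thus $\cU_{\mathsf{mix}}$ is separated and $\cB$ yields a genuine metric. In the general, possibly discontinuous, setting the same countable basis $\cB$ still yields a pseudometric $d$ with $\cU_{\mathsf{mix}} = \cU_d$ (which is all that is needed in view of the pseudometric convention introduced earlier), and this is a metric exactly when $\cU_{\mathsf{mix}}$ is Hausdorff.
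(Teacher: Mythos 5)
Your argument is correct and is essentially the paper's own proof: take a countable basis $\{U_n\}$ of entourages of $\cU$ and a countable neighbourhood basis $\{O_m\}$ of $0$ in $G$, observe by monotonicity of $(O,U)\mapsto V[O,U]$ that $\{V[O_m,U_n] : m,n \in \N\}$ is a countable basis of entourages of $\cU_{\mathsf{mix}}$, and invoke Theorem~\ref{thm-char-metrizability}. Where you go beyond the paper is the Hausdorff point, and it is a genuine one: Theorem~\ref{thm-char-metrizability} is stated for Hausdorff uniformities, and since $\cU_{\mathsf{mix}} \subseteq \cU$ separation is \emph{not} inherited from $\cU$. Indeed, without continuity of $\alpha$ it can fail: take $G = X = \R$, $\cU$ the usual metric uniformity, and $\alpha(t,x) = x + \psi(t)$ with $\psi : \R \to \R$ a discontinuous $\Q$-linear map; then $\psi(O)$ is dense in $\R$ for every $O \in \cO$ (the graph of $\psi$ is dense in the plane), the uniformity is $G$-invariant since each $\alpha(t,\cdot)$ is an isometry, and every $V[O,U]$ equals $X \times X$, so $\cU_{\mathsf{mix}}$ is the indiscrete uniformity and is only pseudometrisable. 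The paper's proof passes over this silently, implicitly relying on its blanket convention that all uniformities considered are Hausdorff; your verification that continuity of $\alpha$ forces separation, together with the honest fallback to a pseudometric in the general case, is therefore a worthwhile supplement rather than a redundancy.
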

\begin{proof}
Since $\cU$ is metrisable, we can find a countable basis of
entourages $\cB \subseteq \cU$. Moreover, since the topology on $G$
is metrisable, we can find some countable $\cC \subseteq \cO$ which
is a basis of open sets at $t=0$ for the topology of $G$.
It follows immediately that $\{ V[O,U] \,:\, O \in \cC,\, U \in \cB \}$ is
a countable bases of entourages for $\cU_{\mathsf{mix}}$. The
desired statement now follows from Theorem~\ref{thm-char-metrizability}.
\end{proof}

\begin{remark}
The converse is not true. Indeed, consider the action of any group $G$ on an arbitrary set $X$. Let $\cU$ be the uniformity on
$X$ given by the discrete metric $d$.

It is easy to see that
\[
\cU= \{ U \subseteq X \times X : \Delta \subseteq X \} \,.
\]
Then, it follows immediately that $\cU_{\mathsf{mix}}=\cU$ is
metrisable, no matter if the topology on $G$ is metrisable or not.
\end{remark}

\smallskip

Now, we show that if $\cU$ is $G$-invariant, complete and
metrisable, $\cU_{\mathsf{mix}}$ is complete.

\begin{prop} Let $\cU$ be a $G$-invariant uniformity. If $\,\cU$ and the topology on $G$ are metrisable and if $\cU$ is complete, then $\cU_{\mathsf{mix}}$ is complete.
\end{prop}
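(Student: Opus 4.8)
The plan is to exploit the metrisability of $\cU_{\mathsf{mix}}$ established in the preceding proposition, so that completeness may be tested on sequences rather than arbitrary Cauchy filters. Since $\cU$ is $G$-invariant and metrisable, Lemma~\ref{metris} lets me fix a $G$-invariant metric $d$ with $\cU=\cU_d$; write $U_\eps:=\{(x,y):d(x,y)<\eps\}$. As the topology on $G$ is metrisable and $G$ is a topological group, I also fix a translation-invariant metric $\rho$ on $G$ inducing its topology, together with a decreasing basis $(O_k)_{k\in\N}$ of open neighbourhoods of $0$ with $O_k\subseteq\{t\in G:\rho(0,t)<2^{-k}\}$. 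The key external fact I use is that $G$, being locally compact, is complete, so $\rho$-Cauchy sequences converge. The strategy is that a $\cU_{\mathsf{mix}}$-Cauchy sequence is Cauchy only ``up to small translations'', and I will absorb those translations to produce a genuinely $\cU$-Cauchy sequence.

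So let $(x_n)$ be $\cU_{\mathsf{mix}}$-Cauchy. Using the countable basis $\{V[O_k,U_{2^{-k}}]\}_k$ of $\cU_{\mathsf{mix}}$, I pass to a subsequence $(y_k)$ with $(y_k,y_{k+1})\in V[O_k,U_{2^{-k}}]$ for every $k$. By the definition of $V[O_k,U_{2^{-k}}]$ this yields $t_k\in O_k$ with $d(\alpha(-t_k,y_k),y_{k+1})<2^{-k}$. I then set $s_k:=t_1+\dots+t_{k-1}$ (with $s_1=0$) and $w_k:=\alpha(s_k,y_k)$. Using the $G$-invariance of $d$ and $s_{k+1}=s_k+t_k$, a one-line computation gives
\[
d(w_k,w_{k+1})=d\big(\alpha(s_k,y_k),\alpha(s_k,\alpha(t_k,y_{k+1}))\big)=d\big(y_k,\alpha(t_k,y_{k+1})\big)=d\big(\alpha(-t_k,y_k),y_{k+1}\big)<2^{-k},
\]
so $(w_k)$ is $\cU$-Cauchy and, by completeness of $\cU$, converges to some $w\in X$, i.e. $d(w_k,w)\to 0$. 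Simultaneously $\rho(s_k,s_{k+1})=\rho(0,t_k)<2^{-k}$, so $(s_k)$ is $\rho$-Cauchy and converges in $G$ to some $s$.

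It remains to recover the limit of $(y_k)$ in $\cU_{\mathsf{mix}}$. Put $x:=\alpha(-s,w)$ and write, for each $k$,
\[
y_k=\alpha(-s_k,w_k)=\alpha\big(s-s_k,\,\alpha(-s,w_k)\big).
\]
Fix a basic entourage $V[O,U]$ with $O\in\cO$ and $U\in\cU$. Since $s_k\to s$ we have $s-s_k\in O$ for large $k$, while $d(\alpha(-s,w_k),\alpha(-s,w))=d(w_k,w)\to 0$ by $G$-invariance, so $(\alpha(-s,w_k),x)\in U$ eventually. Reading the displayed identity as $y_k=\alpha(s-s_k,z_k)$ with $z_k:=\alpha(-s,w_k)$ and $(z_k,x)\in U$ shows $(y_k,x)\in V[O,U]$ for large $k$; hence $y_k\to x$ in $\cU_{\mathsf{mix}}$. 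A $\cU_{\mathsf{mix}}$-Cauchy sequence possessing a convergent subsequence converges to the same limit, so $x_n\to x$, and $\cU_{\mathsf{mix}}$ is complete.

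The main obstacle is precisely that a $\cU_{\mathsf{mix}}$-Cauchy sequence need not be $\cU$-Cauchy, so completeness of $\cU$ cannot be invoked directly. The heart of the argument is the telescoping choice $s_k=t_1+\dots+t_{k-1}$, which simultaneously aligns the terms into a $\cU$-Cauchy sequence (via the $G$-invariance of $d$) and keeps the correcting translations summable, so that their partial sums converge in the complete group $G$ and the limit can be transferred back through a single small translation to yield convergence of $(y_k)$.
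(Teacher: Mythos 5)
Your proof is correct and follows essentially the same route as the paper's: reduce to sequences via metrisability, fix invariant metrics via Lemma~\ref{metris}, extract a subsequence whose consecutive terms split into a small translation $t_k$ plus a $\cU$-small error, form the telescoping sums $s_k$ and the corrected sequence $w_k=\alpha(s_k,y_k)$, and use completeness of $\cU$ and of $G$ to recover the limit $\alpha(-s,w)$ in $\cU_{\mathsf{mix}}$. The only difference is cosmetic: you make explicit two points the paper leaves tacit, namely the completeness of the locally compact group $G$ and the final passage from a convergent subsequence back to the full Cauchy sequence.
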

\begin{proof}
Since $\cU$ is $G$-invariant and metrisable, by Lemma~\ref{metris}
there exists a $G$-invariant metric $d_{\cU}$  which defines the
uniformity $\cU$. Next, let $d_G$ be any $G$-invariant metric on $G$
which gives the topology of $G$.
Since $\cU_{\mathsf{mix}}$ is metrisable (by the preceding
proposition), to prove completeness it suffices to show that every
sequence $(x_n) \in X$ which is Cauchy with respect to
$\cU_{\mathsf{mix}}$ has a subsequence which is convergent in
$\cU_{\mathsf{mix}}$ to some $x \in X$.

Let $(x_n)$ be a Cauchy sequence with respect to
$\cU_{\mathsf{mix}}$. Let
\begin{align*}
  O_n & = \{ t \in G \,:\, d_G(t,0) < 2^{-n}\} =B_{2^{-n}}(0) \,, \\
  U_n & =\{ (x,y) \in X \times X \,:\, d_{\cU}(x,y) <2^{-n} \} \in \cU  \,, \\
  V_n & = V[O_n,U_n] \in \cU_{\mathsf{mix}} \,.
\end{align*}
We note
\[
V_n = \{ (\alpha(t,x), y) \,:\, d_U(x,y) < 2^{-n} ,\, d_G(t,0)
< 2^{-n}\} \,.
\]
Now, since $(x_n)$ is Cauchy with respect to $\cU_{\mathsf{mix}}$,
for each $m \in \N$ there exists some $M(m)\in \N$ such that, for
all $n_1,n_2 >M(m)$ we have
\[
(x_{n_1}, x_{n_2}) \in V_m \,.
\]
Hence, we can construct inductively a sequence $(n_k)$ such that
\[
  n_1 > M(1) \qquad \text{ and } \qquad
  n_{k+1} > \max \{ n_k , M(1),M(2),\ldots, M(k),M(k+1) \}  \,.
\]
Note here that $n_{k+1} > n_k$ by construction and hence $(x_{n_k})$ is a subsequence of $(x_n)$.
Next, since $n_{k}, n_{k+1} >M_k$ the definition of $M_k$ gives
\[
(x_{n_{k+1}}, x_{n_k}) \in V_k \,.
\]
Therefore, there exist some $t_k \in O_k$ and $y_k$ such that
\[
  (x_{n_{k+1}}, x_{n_k})   = (\alpha(r_k,y_k), x_{n_k}) \qquad \text{ and } \qquad
  d_U(y_k,x_k)  <2^{-k} \,.
\]
Let $t_k := -r_k$. Then $d(t_k,0)=d(0,r_k) <2^{-k}$, and $x_{n_{k+1}}=\alpha(r_k,y_k)$ gives
\[
y_k= \alpha(-r_k,x_{n_{k+1}})=  \alpha(t_n,x_{n_{k+1}}) \,.
\]
In particular,
\[
 d_G(t_k,0)  <2^{-k} \qquad \text{ and } \qquad
d_U(\alpha(t_k,x_{n_{k+1}}), x_{n_k} ) =   d_U(y_k,x_k)  <2^{-k} \,.
\]

Define $s_{k}=t_1+t_2+\ldots +t_{k-1} \in G$. Then, since $d_G$ is $G$-invariant, we have
\[
d(s_{k+1},s_k)=d(s_{k+1}-s_k,0)=d(t_k,0) < 2^{-k}
\]
and a standard telescopic argument shows that $(s_k)$ is a Cauchy sequence in $G$, and hence, convergent to some $s \in G$.

Next, since the metric $d_U$ is $G$-invariant, we have
\[
d_U(\alpha(s_{k+1},x_{n_{k+1}}), \alpha(s_{k},x_{n_{k}}))=d_U(\alpha(s_{k+1}-s_k,x_{n_{k+1}}), x_{n_{k}})=d_U(\alpha(t_k,x_{n_{k+1}}), x_{n_k}  < 2^{-k} \,.
\]
Again, by a standard telescopic argument, the sequence $(y_k)$ with $y_k=\alpha(s_{k}, x_{n_k})$ is a Cauchy sequence in $(X, d_U)$. Since $\cU$ is complete it follows that $(y_k)$ converges to some $y \in X$.

Now, since $(x_{n_k})$ with $x_{n_k}=\alpha(-s_k,y_k), y_k$ converges in $\cU$ to $y\in X$ and $(s_k)$ converges in $G$ to $s$, the definition of
$\cU_{\mathsf{mix}}$ implies immediately that $(x_{n_k})$ converges
to $\alpha(-s,y)$. This completes the proof.
\end{proof}

Next, we show that mixing the uniformity does not change Bohr-type almost periodicity for an element.

\begin{theorem}
Let $\alpha$ be an action of $G$ on a uniform space $(X, \cU)$ such
that $\cU$ is $G$-invariant.  Then, $x\in X$ is Bohr-type almost
periodic with respect to $\cU$ if and only if $x$ is Bohr-type
almost periodic with respect to $\cU_{\mathsf{mix}}$.
\end{theorem}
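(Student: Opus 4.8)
The plan is to separate the two implications and to reduce everything to a single bookkeeping identity relating the almost-period sets of a generating entourage $V[O,U]$ of $\cU_{\mathsf{mix}}$ to those of $U$. Concretely, the first thing I would record is that for every $O\in\cO$ and $U\in\cU$,
\[
P_{V[O,U]}(x)=O+P_U(x).
\]
This is a direct unwinding of the definitions: $(\alpha(t,x),x)\in V[O,U]$ means there are $s\in O$ and $(x',y)\in U$ with $\alpha(t,x)=\alpha(s,x')$ and $y=x$; writing $x'=\alpha(t-s,x)$ this says exactly that $t-s\in P_U(x)$ for some $s\in O$, i.e. $t\in O+P_U(x)$, and reading the same chain backwards gives the reverse inclusion. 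Note that only the group-action axioms are used here; no continuity or equicontinuity of $\alpha$ enters.

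For the implication that $\cU$-Bohr almost periodicity implies $\cU_{\mathsf{mix}}$-Bohr almost periodicity I would simply invoke the inclusion $\cU_{\mathsf{mix}}\subseteq\cU$ from part (a) of the characterization of $\cU_{\mathsf{mix}}$: every $V\in\cU_{\mathsf{mix}}$ already lies in $\cU$, so relative denseness of $P_V(x)$ is guaranteed by the hypothesis. For the converse, fix an arbitrary $U\in\cU$. Since $G$ is locally compact I may choose a \emph{relatively compact} $O\in\cO$. Then $V[O,U]\in\cU_{\mathsf{mix}}$, so the hypothesis and the identity above give that $O+P_U(x)=P_{V[O,U]}(x)$ is relatively dense, say $(O+P_U(x))+K=G$ with $K$ compact. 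Rewriting the left-hand side as $P_U(x)+(O+K)$ and enlarging $O$ to its closure yields
\[
G=P_U(x)+(O+K)\subseteq P_U(x)+(\overline{O}+K)\subseteq G,
\]
so $P_U(x)+(\overline{O}+K)=G$ with $\overline{O}+K$ compact; hence $P_U(x)$ is relatively dense, and as $U$ was arbitrary, $x$ is $\cU$-Bohr almost periodic.

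The only genuine content lies in the ``compactness absorption'' step of the converse: relative denseness of the enlarged set $O+P_U(x)$ forces relative denseness of $P_U(x)$ itself precisely because the enlargement is by a relatively compact open set, which can be swallowed into the compact witness of relative denseness. I expect this step, together with getting the identity $P_{V[O,U]}(x)=O+P_U(x)$ exactly right, to be the main (though modest) obstacle. It is worth emphasizing that the whole argument proceeds without any continuity or equicontinuity assumption on $\alpha$, consistent with the remark preceding the statement that the mixing process may alter Bochner-type but not Bohr-type almost periodicity.
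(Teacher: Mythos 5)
Your proposal is correct and follows essentially the same route as the paper: the forward direction via $\cU_{\mathsf{mix}}\subseteq\cU$, and the converse by testing against $V[O,U]$ with $O$ relatively compact, unwinding $(\alpha(t,x),x)\in V[O,U]$ into $t-s\in P_U(x)$ for some $s\in O$, and absorbing $\overline{O}$ into the compact witness $K$. The only cosmetic difference is that you package the unwinding as the clean identity $P_{V[O,U]}(x)=O+P_U(x)$, where the paper only records the inclusion $P_{V[O,U]}(x)\subseteq P_U(x)+\overline{O}$, which is all that is needed.
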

\begin{proof}
$\Longrightarrow$: This follows immediately from the definition of
Bohr-type almost periodicity and $\cU_{\mathsf{mix}} \subseteq \cU$.

\medskip

\noindent $\Longleftarrow$: Let an arbitrary $U \in \cU$ be given. Fix $O \in
\cO$ with compact closure and  consider
\[
V=V[O,U] \in
\cU_{\mathsf{mix}} \,.
\]
  Since  $x$ is Bohr-type almost periodic with
respect to $\cU_{\mathsf{mix}}$, the set
\[
P_{V}(x):=\{ t \in G \,:\, (\alpha(t,x), x) \in V \}
\]
is relatively dense.  Hence, there exists some compact set $K
\subseteq G$ such that
\[
P_{V}(x)+K =G \,.
\]
Now, let $t \in P_{V}(x)$
be arbitrary. This means
\[
(\alpha(t,x), x) \in V[O,U]
\]
and therefore, there exists some $s \in O$ and $y \in X$ such that
\[
  \alpha(t,x)  =\alpha(s,y) \mbox{ and }
  (y,x) \in U
  \]
holds. The first relation gives $y=\alpha(t-s,x)$ and therefore,
\[
(\alpha(t-s,x),x) \in U \,.
\]
This shows
\[
t-s \in P_U(x)=\{ t \in G \,:\, (\alpha(t,x), x) \in U \} \,.
\]
Thus we obtain
\[
t \in P_U(x)+s \subseteq P_U(x)+O \subseteq P_U(x)+\overline{O} \,.
\]
Since $t \in P_{V}(x)$ was arbitrary, we get
\[
P_{V}(x) \subseteq P_U(x)+\overline{O} \,,
\]
and hence
\[
G= P_{V}(x)+K \subseteq P_U(x)+(\overline{O}+K) \,.
\]
Since $\overline{O}+K$ is compact, we get that $P_U(x)$ is
relatively dense. As $U \in \cU$ was arbitrary, the claim follows.
\end{proof}

When we now collect all results from this section and combine them with Theorem~\ref{thm:apequiv}, the following result is an immediate consequence.

\begin{coro}\label{cor1}
Let $G$ act on a complete metric space $(X,d)$ space such that $d$
is pseudo $G$-invariant, and let $\cU=\cU_d$. Then, for $x \in X$,
the following assertions are equivalent.
\begin{itemize}
  \item[(i)] $x$ is Bohr-type almost periodic in $(X, \cU, \alpha)$.
  \item[(ii)] $x$ is Bohr-type almost periodic in $(X, \cU_{\mathsf{mix}}, \alpha)$.
  \item[(iii)] $x$ is Bochner-type almost periodic in $(X, \cU_{\mathsf{mix}}, \alpha)$.
  \item[(iv)] $x$ is pseudo Bochner-type almost periodic in $(X, \cU_{\mathsf{mix}}, \alpha)$.
  \item[(v)] $H_x^{\cU_{\mathsf{mix}}}$ is a compact Abelian group with the group operation $\oplus$ induced by
\[
\alpha(t,x) \oplus \alpha(s,x):= \alpha(s+t,x) \,.
\]
\item[(vi)] There exists a continuous function $\Psi : G_{\mathsf{b}} \to H^{\cU_m}_x$ such that
\[
\alpha(t,x)= \Psi( i_{\mathsf{b}}(t)) \quad \mbox{for all }  t \in G
\,.
\]
\end{itemize}
Moreover, in this case $F(t)=\alpha(t,x)$ define a group
homomorphism with dense range $F : G \to H^{\cU}_x$, and $\Psi$ is
an onto group homomorphism.   \qed
\end{coro}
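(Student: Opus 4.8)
The plan is to reduce the whole statement to Theorem~\ref{thm:apequiv} applied to the mixed uniformity $\cU_{\mathsf{mix}}$, using the mixing invariance theorem proved just above only to bridge between $\cU$ and $\cU_{\mathsf{mix}}$ at the level of Bohr-type almost periodicity. First I would record the structure of $(X,\cU_{\mathsf{mix}})$ coming from the hypotheses and the propositions of this section. Since $d$ is pseudo $G$-invariant, Lemma~\ref{metris} gives that $\cU=\cU_d$ is $G$-invariant; by construction $\cU$ is metrisable, and it is complete because $(X,d)$ is complete. With $\cU$ being $G$-invariant, the propositions above show that $\cU_{\mathsf{mix}}$ is a uniformity, that $\alpha$ is equicontinuous (hence continuous) on $(X,\cU_{\mathsf{mix}})$, that $\cU_{\mathsf{mix}}$ is metrisable, and that it is complete. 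In addition I would verify directly that $\cU_{\mathsf{mix}}$ is $G$-invariant: given $V\in\cU_{\mathsf{mix}}$ with $V[O,U]\subseteq V$, one picks $U'\in\cU$ with $\alpha(r,U')\subseteq U$ for all $r\in G$ and sets $V':=V[O,U']$; since $\alpha(r,\alpha(t,a'))=\alpha(t,\alpha(r,a'))$ for $t\in O$, it follows that $\alpha(r,V')\subseteq V[O,U]\subseteq V$ for every $r\in G$.

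Thus $(X,\cU_{\mathsf{mix}})$ is a complete uniform Hausdorff space carrying a continuous, $G$-invariant action, and Theorem~\ref{thm:apequiv} applies verbatim to it. It yields the equivalence of statements (ii), (iii), (iv), (v) and (vi) of the corollary, together with the ``moreover'' assertions that $F(t)=\alpha(t,x)$ is a group homomorphism with dense range and that $\Psi$ is onto. To close the circle, (i)$\,\Leftrightarrow\,$(ii) is precisely the mixing invariance theorem, which states that $x$ is Bohr-type almost periodic with respect to $\cU$ if and only if it is Bohr-type almost periodic with respect to $\cU_{\mathsf{mix}}$. Chaining this with the equivalences furnished by Theorem~\ref{thm:apequiv} produces the full list of six equivalent conditions.

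The main obstacle is not a single deep step but making sure that $\cU_{\mathsf{mix}}$ really meets \emph{every} hypothesis of Theorem~\ref{thm:apequiv}. Two points deserve care. First, completeness of $\cU_{\mathsf{mix}}$ is obtained from the completeness proposition, whose statement requires the topology on $G$ to be metrisable in addition to $\cU$ being complete and metrisable; one should therefore make sure that metrisability of $G$ is available in the setting of the corollary. Second, a coarser uniformity need not automatically inherit Hausdorffness or $G$-invariance, so both must be confirmed for $\cU_{\mathsf{mix}}$ --- Hausdorffness being tied to the metrisability proposition and $G$-invariance to the short computation above --- before the phrase ``compact Abelian group'' in (v) is even meaningful. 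Once these verifications are in place, the corollary is indeed the announced immediate consequence of assembling the section's propositions with Theorem~\ref{thm:apequiv}.
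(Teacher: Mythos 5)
Your proposal is correct and takes essentially the same route the paper intends: the paper offers no written proof at all, saying only that the corollary is ``an immediate consequence'' of collecting the results of the mixing section and combining them with Theorem~\ref{thm:apequiv}, which is precisely the assembly you carry out, including using the mixing invariance theorem for (i)$\Leftrightarrow$(ii). Your extra verifications --- the $G$-invariance and Hausdorffness of $\cU_{\mathsf{mix}}$, and the observation that completeness of $\cU_{\mathsf{mix}}$ rests on metrisability of the topology of $G$ --- address hypotheses the paper leaves tacit; the last point is a genuine omission in the corollary's statement (the authors themselves add ``Let $G$ be metrisable'' in the analogous norm-topology corollary later), not a defect of your argument.
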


\section{Examples}\label{sect-exa}
In this section we present a wealth of examples for our results.
This will in particular how all earlier corresponding results on
almost periodicity that we are aware of fall within our framework.

\subsection{Bohr and Bochner almost periodicity for functions}
Consider $X$ to be the vector space $\Cu(G)$ of uniformly continuous
bounded functions on a LCAG $G$ with the topology given by $\| \cdot
\|_\infty$ and the translation action
\[
\alpha(t,f)(x):= f(x-t) \,.
\]
It is easy to see that $\alpha$ is $G$-invariant, equicontinuous
and that the uniformity is complete. Therefore, Thm.~\ref{thm:apequiv} gives the following well-known result.
\begin{theorem}\label{CU-equiv} Let $f \in \Cu(G)$. Then, the following are equivalent.
\begin{itemize}
  \item[(i)] $f$ is Bohr-almost periodic.
  \item[(ii)] $f$ is Bochner-almost periodic.
  \item[(iii)] $H_f:=\overline{ \{ T_tf \,:\,t \in G \}}$ is a compact Abelian group with the addition operation induced by
\[
(T_tf) \oplus (T_sf)= T_{s+t} f\,.
\]
\item[(iv)] There exists a continuous function $\Psi : G_{\mathsf{b}} \to H_f$ such that
\[
T_tf= \Psi( i_{\mathsf{b}}(t)) \quad \mbox{ for all } t \in G \,.
\]
\end{itemize}
Moreover, in this case $F(t)=\alpha(t,x)$ define a group homomorphism with dense range $F : G \to H_f$, and $\Psi$ is an onto group homomorphism.
 \qed
\end{theorem}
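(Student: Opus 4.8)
The plan is to obtain this as a direct specialization of Theorem~\ref{thm:apequiv}. To that end I would first record the abstract data: take $X = \Cu(G)$ equipped with the uniformity $\cU = \cU_d$ induced by the metric $d(f,g) = \|f-g\|_\infty$, and let $\alpha$ be the translation action $\alpha(t,f) = T_t f$, i.e. $\alpha(t,f)(x) = f(x-t)$. The theorem will then follow word for word, with $H_f$ in place of $H_x^{\cU}$ and $T_tf$ in place of $\alpha(t,x)$, once the three standing hypotheses of Theorem~\ref{thm:apequiv} are verified for this data: that $\cU$ is $G$-invariant, that $(X,\cU)$ is complete, and that $\alpha$ is continuous.

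Two of these are immediate. For $G$-invariance I would note that each translation $T_t$ is an isometry of the sup norm, since $\|T_t f - T_t g\|_\infty = \sup_x |f(x-t)-g(x-t)| = \|f-g\|_\infty$; hence $d$ is a $G$-invariant metric, and Lemma~\ref{metris} yields that $\cU_d$ is $G$-invariant. Completeness is equally direct, as $\Cu(G)$ with $\|\cdot\|_\infty$ is a Banach space and thus complete as a metric space, so the induced uniformity is complete.

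The step that requires genuine care, and the only place where the restriction to \emph{uniformly} continuous functions is used, is the continuity of $\alpha : G \times \Cu(G) \to \Cu(G)$. At a point $(t_0,f_0)$ I would estimate, using the isometry property for the second term,
\[
\|\alpha(t,f) - \alpha(t_0,f_0)\|_\infty \leq \|f - f_0\|_\infty + \|T_{t-t_0} f_0 - f_0\|_\infty \,.
\]
The first summand is controlled by $d(f,f_0)$, while the second equals $\sup_x |f_0(x-(t-t_0)) - f_0(x)|$, which tends to $0$ as $t \to t_0$ precisely because $f_0$ is uniformly continuous. This gives joint continuity of $\alpha$, and then Lemma~\ref{lem:7} supplies equicontinuity on each orbit closure automatically.

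With continuity, $G$-invariance and completeness in hand, Theorem~\ref{thm:apequiv} applies verbatim, and its conclusions (i)--(v) translate into the assertions (i)--(iv) of the present theorem (the pseudo Bochner-type condition being subsumed), together with the final statement that $F(t)=T_tf$ is a dense-range homomorphism and $\Psi$ is onto. The main obstacle is therefore not conceptual but simply the continuity verification; the classical equivalence of the Bohr and Bochner notions, which historically demanded a dedicated compactness argument, is here delivered for free by the abstract machinery.
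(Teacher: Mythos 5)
Your proposal is correct and takes essentially the same route as the paper: the paper likewise derives this theorem by specializing Theorem~\ref{thm:apequiv} to $X=\Cu(G)$ with the sup-norm uniformity and the translation action, simply asserting that $G$-invariance, completeness and the continuity/equicontinuity hypothesis hold. Your explicit continuity estimate—which pinpoints exactly where uniform continuity of $f$ is needed, and then lets Lemma~\ref{lem:7} upgrade it to equicontinuity on orbit closures—is precisely the verification the paper leaves to the reader (and is in fact stated more carefully, since translation is equicontinuous only on orbit closures, not on all of $\Cu(G)$).
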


Note here that the mapping $\delta_0 : \Cu(G) \to \C$ with
\[
\delta_0(f)=f(0)
\]
is uniformly continuous, and hence so is $\delta_0 \circ \Psi \in C(G_{\mathsf{b}})$. Then, (iv) in Theorem~\ref{CU-equiv} implies that there exists some $g:=\delta_0 \circ \Psi \in C(G_{\mathsf{b}})$ such that
\[
f(-t)= g(i_{\mathsf{b}}(t)) \,.
\]
It follows easily from here that (iv) in Theorem~\ref{CU-equiv} can be replaced by
\begin{itemize}
  \item[(iv')] There exists some $g \in C(G_{\mathsf{b}})$ such that $f=g \circ i_{\mathsf{b}}$.
\end{itemize}

\subsection{Group valued almost periodic functions}
In this section we review the concept of group valued almost
periodic functions, as discussed recently in \cite{LLRSS}.

Let $G,H$ be two LCAG, with $H$ complete. Let $\cH$ be the set of all functions $f: G \to H$.
For each neighbourhood $W \subseteq H$ of $0$ define
\[
U_W := \{ (f,g) \in \cH \times \cH \,:\, f(x)-g(x) \in W \mbox{ for all
} x \in G\} \,,
\]
and let
\[
\cU= \{ U \subseteq \cH \times \cH \,:\, \text{ there exists } W \mbox{ such that } U_W \subseteq U \} \,.
\]
As usual, let $C(G:H)$, $C_{b}(G:H)$ and $\Cu(G:H)$ denote the
subspaces of $\cH$ consisting of continuous, continuous bounded, and
uniformly continuous and bounded functions, respectively. From
\cite[Lem.~6]{LLRSS} we find the following.
\begin{lemma}
\begin{itemize}
\item [(a)] $\cU$ is a uniformity on $\cH$.
\item [(b)] $\cH$ is complete with respect to $\cU$.
\item [(c)] $C(G:H), C_{b}(G:H)$ and $\Cu(G:H)$ are closed in $\cH$.
\end{itemize}
\end{lemma}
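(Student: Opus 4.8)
The whole statement reduces to the fact that $\cU$ is the uniformity of uniform convergence induced by the canonical group uniformity of $H$, whose entourages are the sets $\{(a,b)\in H\times H : a-b\in W\}$ with $W$ a neighbourhood of $0$; so my plan is to transcribe the relevant properties of $H$ to $\cH$. For (a) I would verify the five axioms directly. The diagonal lies in every $U_W$ because $0\in W$; upward closure is built into the definition of $\cU$; for finite intersections one uses $U_{W_1\cap W_2}\subseteq U_{W_1}\cap U_{W_2}$, so $W_1\cap W_2$ works; for the composition axiom, given $W$ pick a neighbourhood $W'$ of $0$ with $W'+W'\subseteq W$ and check $U_{W'}\circ U_{W'}\subseteq U_W$ by adding the pointwise differences; and for symmetry note $U_W^{-1}=U_{-W}\in\cU$ since $-W$ is again a neighbourhood of $0$. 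Once the single inclusion $U_{W'}\circ U_{W'}\subseteq U_W$ is recorded, all of this is routine.

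For (b) I would argue with Cauchy filters, as in the paper's definition of completeness. Let $\cF$ be a Cauchy filter on $\cH$. For fixed $x\in G$ the evaluation map $\mathrm{ev}_x:\cH\to H$, $f\mapsto f(x)$, pushes $\cF$ forward to a filter on $H$; applying the Cauchy property of $\cF$ to the entourages $U_W$ shows this image filter is Cauchy in $H$, hence convergent by completeness of $H$, to a point which is unique by Hausdorffness of $H$ and which I call $f(x)$. This defines a candidate limit $f\in\cH$, and it remains to show $\cF\to f$, i.e. $U_W[f]\in\cF$ for every (without loss of generality symmetric) $W$. Given such a $W$, pick a symmetric $W'$ with $W'+W'\subseteq W$ and, by the Cauchy property, $A\in\cF$ with $A\times A\subseteq U_{W'}$. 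Fix $g\in A$ and $x\in G$; convergence of the evaluated filter to $f(x)$ gives $B\in\cF$ with $h(x)-f(x)\in W'$ for all $h\in B$, and choosing $h\in A\cap B$ and combining $g(x)-h(x)\in W'$ with $h(x)-f(x)\in W'$ yields $g(x)-f(x)\in W$. As $x$ was arbitrary this gives $g\in U_W[f]$, hence $A\subseteq U_W[f]$ and $U_W[f]\in\cF$.

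For (c) I would run the standard three-step (``$3\varepsilon$'') argument with neighbourhoods of $0$ in $H$ playing the role of $\varepsilon$-balls. If $f$ is in the closure of $C(G:H)$, then to prove continuity at $x_0$ I fix $W$, choose a symmetric $W'$ with $W'+W'+W'\subseteq W$, take $g\in C(G:H)$ with $(f,g)\in U_{W'}$, and use continuity of $g$ at $x_0$ to find a neighbourhood $O$ of $x_0$ with $g(x)-g(x_0)\in W'$ for $x\in O$; then $f(x)-f(x_0)=(f(x)-g(x))+(g(x)-g(x_0))+(g(x_0)-f(x_0))\in W'+W'+W'\subseteq W$. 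The identical pattern with uniform continuity of $g$ shows the closure of $\Cu(G:H)$ consists of uniformly continuous functions. For the boundedness clauses in $C_{b}(G:H)$ and $\Cu(G:H)$ I read ``bounded'' as having totally bounded range (equivalently, by completeness of $H$, relatively compact range): given $W$, choose $W'$ with $W'+W'\subseteq W$ and $g$ with $(f,g)\in U_{W'}$; then $f(G)\subseteq g(G)+W'$, and covering the totally bounded set $g(G)$ by finitely many translates of $W'$ covers $f(G)$ by finitely many translates of $W'+W'\subseteq W$, so $f(G)$ is totally bounded.

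The only genuinely non-formal point is the completeness argument in (b): the delicate part is interchanging the pointwise limits $f(x)$ with the uniform filter convergence, which is precisely what the $x$-uniform choice of $A$ followed by the $x$-dependent choice of $B$ accomplishes. Everything else is a direct lift of properties of the complete Hausdorff group $H$, and completeness of $H$ enters exactly twice: to produce the values $f(x)$ in (b) and to turn total boundedness into relative compactness in (c).
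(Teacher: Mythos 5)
Your proposal is correct, but it is worth noting that the paper itself does not prove this lemma at all: it simply imports it as \cite[Lem.~6]{LLRSS}. So your argument is not a variant of the paper's proof but a self-contained replacement for the citation, and it is the standard one: the axioms of a uniformity are verified from the group-uniformity of $H$ (with $U_{W'}\circ U_{W'}\subseteq U_W$ for $W'+W'\subseteq W$ and $U_W^{-1}=U_{-W}$ as the only non-trivial inclusions); completeness of $\cH$ is the classical fact that the space of \emph{all} maps into a complete Hausdorff group is complete under uniform convergence, proved exactly as you do by evaluating the Cauchy filter pointwise and then upgrading pointwise convergence to convergence in $\cU$ via the symmetric-entourage splitting $W'+W'\subseteq W$; and the closedness statements are the three-neighbourhood analogue of the $3\varepsilon$ argument. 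Two small points deserve attention. First, your reading of ``bounded'' as ``totally bounded (equivalently, by completeness of $H$, relatively compact) range'' is the right one and matches the convention of \cite{LLRSS}; without fixing this interpretation, part (c) for $C_b(G:H)$ and $\Cu(G:H)$ would be ill-posed, since an LCAG carries no norm. Second, in part (b) you should make explicit that the candidate limit automatically lies in $\cH$ precisely because $\cH$ consists of \emph{all} functions $G\to H$ --- this is the only reason the ambient space is complete while the subspaces in (c) need a separate closedness argument, and your proof uses it silently. Neither point is a gap; the proof is sound as written.
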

Note here that $G$ acts naturally on $\cH$ via the translation
\[
(T_tg)(x)=g(x-t) \,,
\]
and that $\cU$ is $G$-invariant.  While the translation  action of
$G$ is not continuous on $\cH$, it is equicontinuous on $\Cu(G:H)$.
Therefore, Theorem~\ref{thm:apequiv} gives the
following statement (compare \cite[Prop.~9]{LLRSS}).

\begin{theorem}\label{thm-1}
Let $f \in \Cu(G:H)$. Then, the following are assertions equivalent.
\begin{itemize}
  \item[(i)] $f$ is Bohr-type almost periodic.
  \item[(ii)] $f$ is Bochner-type almost periodic.
  \item[(iii)] $H_f:=\overline{ \{ T_tf \,:\, t \in G \}}$ is a compact Abelian group with the addition operation induced by
\[
(T_tf) \oplus (T_sf)= T_{s+t} f\,.
\]
\item[(iv)] There exists a continuous function $\Psi : G_{\mathsf{b}} \to H_f$ such that
\[
T_tf= \Psi( i_{\mathsf{b}}(t)) \quad \mbox{ for all } t \in G \,.
\]
\end{itemize}
Moreover, in this case $F(t)=\alpha(t,x)$ define a group homomorphism with dense range $F : G \to H_f$, and $\Psi$ is an onto group homomorphism. \qed
\end{theorem}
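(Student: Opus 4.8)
The plan is to deduce Theorem~\ref{thm-1} directly from Theorem~\ref{thm:apequiv} by checking that the restriction of the translation action to the closed subspace $\Cu(G:H)$ meets its hypotheses. Concretely, I would equip $\Cu(G:H)$ with the uniformity induced by $\cU$ and verify four things: that this space is a complete uniform Hausdorff space, that the induced uniformity is $G$-invariant, and that the translation action on $\Cu(G:H)$ is continuous. Once these are in place, Theorem~\ref{thm:apequiv} yields the equivalence of (i)--(iv) verbatim (with pseudo Bochner-type almost periodicity absorbed into the chain), together with the stated \emph{Moreover} assertion that $F$ and $\Psi$ are group homomorphisms.

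Three of the four conditions are essentially bookkeeping. Translation clearly maps $\Cu(G:H)$ into itself, so $\alpha$ restricts to a genuine action there. Completeness of $\Cu(G:H)$ follows at once from the preceding lemma: $\cH$ is complete and $\Cu(G:H)$ is closed in $\cH$, so the closed subspace is itself complete. The Hausdorff property is inherited from $H$: since $H$ is an LCAG, the intersection of all neighbourhoods $W$ of $0$ in $H$ is $\{0\}$, and hence $(f,g)\in U_W$ for every such $W$ forces $f(x)=g(x)$ for all $x$, i.e. $f=g$; thus $\bigcap_{U\in\cU}U=\Delta$. The $G$-invariance of the induced uniformity is immediate from the $G$-invariance of $\cU$ noted above; indeed one even has $\alpha(t,U_W)=U_W$ for each neighbourhood $W$, as translation permutes the fibres of each $U_W$ exactly.

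The one substantive point --- and the only place the restriction to $\Cu(G:H)$ is genuinely needed --- is the continuity of the action on $\Cu(G:H)$. The key input is that every $f_0\in\Cu(G:H)$ is \emph{uniformly} continuous: given a neighbourhood $W$ of $0$ in $H$, choose $W'$ with $W'+W'\subseteq W$; by uniform continuity of $f_0$ there is an open $O\ni 0$ in $G$ with $f_0(a)-f_0(b)\in W'$ whenever $a-b\in O$. Splitting
\[
(T_tf)(x)-(T_{t_0}f_0)(x)=\big[f(x-t)-f_0(x-t)\big]+\big[f_0(x-t)-f_0(x-t_0)\big],
\]
the first bracket lies in $W'$ for all $x$ as soon as $(f,f_0)\in U_{W'}$, while the second lies in $W'$ for all $x$ as soon as $t-t_0\in O$; hence $(T_tf,T_{t_0}f_0)\in U_W$ on this neighbourhood of $(t_0,f_0)$, which is precisely continuity of $\alpha$ there. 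I expect this to be the main obstacle, in the sense that it is the step where the argument could fail: on the full space $\cH$ the second bracket cannot be controlled, since uniform continuity is lost, and the action is continuous only after passing to $\Cu(G:H)$. With continuity secured, all hypotheses of Theorem~\ref{thm:apequiv} are satisfied and the asserted equivalences, together with the properties of $F$ and $\Psi$, follow.
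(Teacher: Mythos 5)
Your proposal is correct and follows essentially the same route as the paper: the paper likewise deduces Theorem~\ref{thm-1} from Theorem~\ref{thm:apequiv}, using that $\Cu(G:H)$ is closed in the complete space $\cH$ (hence complete) and that $\cU$ is $G$-invariant. The one point of difference is how the continuity hypothesis is handled. The paper disposes of it with the one-line claim that the translation action is \emph{equicontinuous} on $\Cu(G:H)$; read literally against Definition~\ref{def:equi}, that claim is too strong: for $G=H=\R$ the functions $f_n(x)=\sin(nx)$ all lie in $\Cu(\R:\R)$, yet no single neighbourhood $O\ni 0$ in $G$ gives $(T_sf_n,f_n)\in U_W$ for all $n$ and all $s\in O$, so the family is not equicontinuous over the whole space. (Equicontinuity does hold on each orbit closure $H_f$, because the modulus of uniform continuity of $f$ is inherited by all translates and their uniform limits; via Lemma~\ref{lem:7}, this is all that the proof of Theorem~\ref{thm:apequiv} actually uses.) What that theorem requires as a hypothesis is exactly what you verify: joint continuity of the action on $\Cu(G:H)$, obtained from the uniform continuity of the base point $f_0$ by your two-term splitting; your argument is correct, up to replacing $O$ by $O\cap(-O)$ so that the sign of $t-t_0$ is immaterial. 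So at the one substantive step your write-up is, if anything, more careful than the paper's, while the overall strategy is identical.
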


Exactly as before, we can define a continuous mapping $F : C(G:H) \to H$ via $F(h)=h(0)$. Then, for each $f$ satisfying the conditions of Theorem~\ref{thm-1}, the function
$f_{\mathsf{b}}(x):= (F \circ \Psi)(-x)$ belongs to $C(G_{\mathsf{b}}:H)$ and satisfies
\[
f_{\mathsf{b}}(i_{\mathsf{b}}(t)) =f(t) \quad \mbox{ for all } t \in G
\,.
\]

\subsection{Almost periodic functions with values in a Banach space}
Bohr almost periodic functions with values in a Banach space have
attracted particular attention (see e.g. \cite{che,SV}). Note also that this class includes functions of the form
\[
G\longrightarrow \mathcal{H} \,, \qquad t\mapsto T_t f \,,
\]
whenever $\mathcal{H}$ is a Hilbert space, $f$ belongs to
$\mathcal{H}$  and $T$ is a unitary representation of $G$ on
$\mathcal{H}$. This class of functions is of prime importance in the
study of diffraction (see e.g. \cite{Gouere-1,LS}).

Now, the   class of Bohr-almost periodic functions on Banach spaces
is actually a particular case of group valued Bohr-almost periodic
functions discussed in the preceding section.  Indeed, if $(B , \|
\cdot \|)$ is Banach space over $\R$ or $\C$, Bohr almost
periodicity in
\[
BC^0(\R, B):= \{ f : \R \to B \,:\, f \mbox{ bounded and continuous } \}
\]
is simply Bohr almost periodicity in $\Cu(G:H)$ where $G=(\R,+)$ and
$H=(B,+)$. Here we can work in $\Cu(G:H)$ because every Bohr almost
periodic function in $BC(\R,B)$ is uniformly continuous \cite{che}.
So, Theorem \ref{thm-1} applies.

\subsection{Stepanov almost periodicity}
Here we follow closely \cite{Spi}.  Let $G$ be a LCAG and $K
\subseteq G$ be a fixed compact set with non-empty interior, and let
$1 \leq p < \infty$.

Let $BS_K^p(G)$ denote the space of all $f \in L^p_{loc}(G)$ such that
\[
\| f \|_{S_K^p}:= \sup_{y \in G} \left( \frac{1}{|K|} \int_{y+K} |f(t)|^p \dd t \right)^{\frac{1}{p}} < \infty \,.
\]
Then, $(BS_K^p(G), \| \cdot \|_{S_K^p})$ is a Banach space \cite[Prop.~2.2]{Spi}. Now, let $\cU_{step}$ be the uniformity defined by this norm on $BS_K^p(G)$.
Since the Stepanov norm is $G$-invariant by definition, so is $\cU_{step}$. Moreover, the translation action is continuous \cite[Lem.~2.5]{Spi} and hence uniformly continuous on every $H_{f}^{\cU_{step}}$ by Lemma~\ref{lem:7}.

Now, Bohr-type almost periodicity with respect to this uniformity is
called \textbf{Stepanov almost periodicity}. Therefore, Stepanov
almost periodicity is equivalent to Bohr-type almost periodicity
with respect to this uniformity, giving \cite[Prop.~2.7]{Spi}.

\subsection{Weak almost periodicity}
Consider $X=\Cu(G)$ with the weak topology of the Banach space
$(\Cu(G), \| \cdot \|_\infty)$. Note here that the weak topology is
not complete for infinite dimensional vector spaces, and hence is
not complete on $\Cu(G)$ unless $G$ is a finite group. Moreover, the
translation action $T_tf(x)=f(x-t)$ is not equicontinuous.

Because of this, in this case we only get the implications
\begin{align*}
\text{Bochner-type almost periodicity}
    &\implies \text{pseudo Bochner-type almost periodicity} \\
    &\implies \text{Bohr-type almost periodicity} \,.
\end{align*}
This is the
reason why weak almost periodicity for functions is defined via
Bochner-type almost periodicity.

\subsection{Autocorrelation topology}
Let $G$ be a second countable LCAG, and let $\cA=(A_n)$ be a van Hove
sequence (which exists due to the second countability of $G$, see \cite[Prop. B.6]{SS}). Fix an open set $U=-U \subseteq G$
and define
\[
\cD_{U}(G) :=\{ \Lambda \subseteq G \,:\, \Lambda \mbox{ is $U$-uniformly discrete} \} \,.
\]
Next, let $\cA$ be an exhaustive nested van Hove sequence (see \cite{MS} for definitions and properties). Then, $\cA$ induces \cite[Prop.~2.1]{MS} a semi-metric $d_{\cA}$ on $\cD_{U}$ via
\[
d_{\cA}(\Lambda, \Gamma):= \limsup_{n\to\infty} \frac{\card( (\Lambda \Delta \Gamma) \cap A_n)}{|A_n|}  \,,
\]
which becomes a metric on the set $\cD:=\cD_{U}/\equiv$ of equivalence classes under the equivalence relation
\[
\Lambda \equiv \Gamma \Leftrightarrow d_{\cA}(\Lambda, \Gamma)=0 \,.
\]
By \cite[Prop.~2.1 and Cor.~3.10]{MS} $\cD$ is complete and $d$ is $G$-invariant.

Let $\cU$ denote the uniformity defined by $d$. Note here that the uniform discreteness of elements in $\cD_{U}$ imply that the translation action is never continuous. Since  $(\cD, \cU)$ is a complete uniform space, $\cU$ is $G$-invariant and both the topology of $G$ and $\cU$ are metrisable, we can mix the uniformity as in Section~\ref{mix}, and then \cite[Lem.~4.5 and Prop.~4.6]{MS} are simply consequences of Corollary~\ref{cor1}.

\subsection{Vague topology}
Consider now $\XX =\cM(G)$ the space of measures on $G$ equipped
with the vague topology, and the translation action of $G$. Since
the vague topology is complete, Bochner-type and pseudo-Bochner type almost
periodicity are equivalent. Moreover, a measure $\mu \in \XX$ is
Bochner-type almost periodic if and only if it is translation
bounded \cite{BL,SS}.

Now, the translation action of $G$ on $\XX$ is not equicontinuous,
so Bochner and Bohr type almost periodicity may not be equivalent.
Since every translation bounded measure is Bochner-type almost
periodic, it is automatically Bohr-type almost periodic.

Next, consider the measure
\[
\mu:= \sum_{n \in \N} n\,\delta_{5^{n+1}\Z+2 \cdot 5^n}
\]
Now, for each $N$ define
\[
  \mu_N :=\sum_{n =1}^{N-1} n\,\delta_{5^{n+1}\Z+2 \cdot 5^n}  \qquad \text{ and } \qquad
  \nu_N :=\sum_{n =N}^{\infty} n\,\delta_{5^{n+1}\Z+2 \cdot 5^n} \,.
\]
Then, by construction $\mu_N$ is $5^N$ periodic and $\supp(\nu_N) \subseteq 2 \cdot 5^{N}+ 5^{N+1}\Z$. In particular, for all $m \in 5^N +5^{N+1}\Z$, we have
\begin{equation}\label{eq5}
  T_m \mu_N = \mu_N  \,, \quad
  \supp(\nu_N) \cap (-5^N, 5^N)  = \varnothing \quad \text{ and } \quad
  \supp(T_m\nu_N) \cap (-5^N, 5^N)  = \varnothing  \,.
\end{equation}
This immediately implies that $\mu$ is Bohr type almost periodic for the vague topology. Indeed, let $U$ be an entourage for the vague topology. Then, there exists some $\varphi_1, \ldots, \varphi_n \in \Cc(G)$ and $\epsilon >0$ such that
\[
U[\varphi_1, \ldots, \varphi_n;\epsilon]:=\{ (\omega, \varpi) \,:\, | \omega(\varphi_j)- \varpi(\varphi_j) | < \epsilon \text{ for all } 1 \leq j \leq n \} \subseteq U \,.
\]
Let $N$ be so that, for all $1 \leq j \leq N$ we have
\[
\supp(\varphi_j) \subseteq (-5^N, 5^N) \,.
\]
Let $m \in 5^N +5^{N+1}\Z$ be arbitrary. By \eqref{eq5}, we have
\[
T_m \mu_N(\varphi_j)= \mu_N(\varphi_j) \qquad \text{ for all } 1 \leq j \leq n \,,
\]
as well as
\[
T_m \nu_N(\varphi_j)= \nu_N(\varphi_j) \qquad \text{ for all } 1 \leq j \leq n \,.
\]
As $\mu=\mu_N + \nu_N$, we get
\[
(\mu, T_m \mu) \in U \quad \text{ for all } m 5^N +5^{N+1}\Z \,.
\]
This shows that $\mu$ is Bohr-type almost periodic, but, since it is not translation bounded, it is not Bochner-type almost periodic.

\subsection{Product topology}
Let $G$ act via translation of functions on a uniform space of $(X,
\cU)$ of functions on $G$, with the property that $\Cu(G) \subseteq
X$. We can then \textbf{push} the uniformity $\cU$ to
$\cM^\infty(G)$ the following way.

For each $U \in \cU, n \in \N$ and all $\varphi_1,\ldots,\varphi_n \in \Cc(G)$  define
\[
V[U: \varphi_1,\ldots,\varphi_n]:=\{ (\mu, \nu) \in \cM^\infty(G)
\times \cM^\infty(G) \,:\, (\mu*\varphi_j, \nu*\varphi) \in U \mbox{ for
all } 1\leq j \leq n \} \,.
\]
Next, define
\begin{align*}
\cU_{\text{pu}}
    &:= \{ V \subseteq \cM^\infty(G) \times \cM^\infty(G) \,:\,
    \text{there exist } U \in \cU,\, n \in \N,\, \varphi_1,\ldots,\varphi_n
       \in \Cc(G) \\
    &\phantom{XXXXXXXXXXXXXXX} \mbox{ such that } V[U: \varphi_1,\ldots,\varphi_n]
      \subseteq V \} \,.
\end{align*}
Then, $\cU_{\text{pu}}$ is a uniformity on $\cM^\infty(G)$. If $\cU$ is $G$-invariant,so is $\cU_{\text{pu}}$. Moreover, if the translation action is equi-continuous on $\cU$, it is also equi-continuous on $\cU_{\text{pu}}$. Completeness is in general more subtle.

This process allows us carry many results about almost periodicity from functions to measures. We look next at one such example.

\medskip

Now, consider the case when $X=\Cu(G)$ with the uniformity given by the norm $\| \cdot \|_\infty$. The uniformity induced by the push of this uniformity to $\cM^\infty(G)$ is called the \textbf{product uniformity}, and is denoted by $\cU_{p}$. The corresponding topology is the \textbf{product topology for measures} (see \cite{ARMA} for properties). It is easy to see that $\cU_{p}$ is $G$-invariant, and the translation action is equicontinuous.

Now, it is not known if this space, or if $\cM^\infty(G)$ is complete, but this space is quasi-complete (meaning any equi-translation bounded closed subset is complete \cite{ARMA}). This turns out to be enough in this situation. Indeed, if $\mu \in \cM^\infty(G)$, then all measures in $H^{\cU_{p}}_\mu$ are equi-translation bounded, and hence this orbit is complete.
Therefore, we get the standard characterization of $\SAP(G)$, the first three equivalences appearing in \cite{ARMA,MoSt}, while the equivalence to condition (iv) appearing in \cite{LR,LS2,LLRSS}.

\begin{theorem}
Let $\mu \in \cM^\infty(G)$. Then, the following assertions are equivalent.
\begin{itemize}
  \item[(i)] For all $\varphi_1,\ldots,\varphi_n \in \Cc(G)$ and all $\eps >0$ the set
\[
  P_\eps =\{ t \in G \,:\, \| T_t (\varphi_j*\mu)-(\varphi_j*\mu) \|_\infty < \eps \mbox{ for all } 1 \leq j \leq n\}
\]
  is relatively dense.
  \item[(ii)] For all $\varphi_1,\ldots,\varphi_n \in \Cc(G)$ and all $\eps >0$ the set
\[
  P_\eps =\{ t \in G \,:\, \| T_t (\varphi_j*\mu)-(\varphi_j*\mu) \|_\infty < \eps \mbox{ for all } 1 \leq j \leq n\}
\]
  is finitely relatively dense.
  \item[(iii)]$H^{\cU_{p}}_\mu$ is compact in the product topology.
  \item[(iv)] $H^{\cU_{p}}_\mu$ is a compact Abelian group, with the group operation induced by
\[
  (T_t \mu) \oplus (T_s \mu):= T_{t+s} \mu \,.
\]
\item[(v)] There exists a continuous function $\Psi : G_{\mathsf{b}} \to H^{\cU_{p}}_\mu$ such that
\[
T_tf= \Psi( i_{\mathsf{b}}(t)) \quad \mbox{ for all } t \in G \,.
\]
Moreover, in this case $F(t)=\alpha(t,x)$ define a group homomorphism with dense range $F : G \to H^{\cU_{p}}_\mu$, and $\Psi$ is an onto group homomorphism.
\end{itemize}
\end{theorem}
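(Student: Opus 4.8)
The plan is to derive the five equivalences from Theorem~\ref{thm:apequiv} by matching the abstract notions of almost periodicity for the product uniformity $\cU_{p}$ with the explicit conditions (i)--(v). The first step is to exhibit a convenient basis of entourages: setting
\[
U_\eps:=\{ (f,g)\in \Cu(G)\times\Cu(G)\,:\, \|f-g\|_\infty<\eps \}\,,
\]
the sets $V[U_\eps:\varphi_1,\ldots,\varphi_n]$, with $\eps>0$, $n\in\N$ and $\varphi_1,\ldots,\varphi_n\in\Cc(G)$, form a basis of entourages for $\cU_{p}$. Since translation commutes with convolution, one has $(T_t\mu)*\varphi_j=T_t(\mu*\varphi_j)=T_t(\varphi_j*\mu)$, so that
\[
(T_t\mu,\mu)\in V[U_\eps:\varphi_1,\ldots,\varphi_n]
\quad\Longleftrightarrow\quad
\|T_t(\varphi_j*\mu)-(\varphi_j*\mu)\|_\infty<\eps \ \text{ for all } 1\le j\le n\,.
\]
Consequently $P_{V[U_\eps:\varphi_1,\ldots,\varphi_n]}(\mu)$ is precisely the set $P_\eps$ appearing in (i) and (ii). Reading off the definitions, Bohr-type almost periodicity of $\mu$ for $\cU_{p}$ is condition (i), pseudo Bochner-type almost periodicity is condition (ii), and Bochner-type almost periodicity is the compactness of $H_\mu^{\cU_{p}}$ in (iii).

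With this dictionary in place, I would check the hypotheses of Theorem~\ref{thm:apequiv}. The uniformity $\cU_{p}$ is $G$-invariant and the translation action is equicontinuous, as recorded before the statement. Moreover each translation $T_t$ is $\cU_{p}$-uniformly continuous, because $(T_t\mu)*\varphi=T_t(\mu*\varphi)$ and $T_t$ is an $\|\cdot\|_\infty$-isometry on $\Cu(G)$; together with equicontinuity in the group variable this yields joint continuity of $\alpha$. Thus every hypothesis of Theorem~\ref{thm:apequiv} holds save one: the ambient space $(\cM^\infty(G),\cU_{p})$ is not known to be complete.

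The main obstacle is therefore completeness, and it is resolved by quasi-completeness. The orbit $O(\mu)=\{T_t\mu:t\in G\}$ is equi-translation bounded, since $|T_t\mu|(y+K)=|\mu|(y-t+K)\le\sup_{z}|\mu|(z+K)$ uniformly in $t$ and $y$, and this bound is inherited by all $\cU_{p}$-limits, so $H_\mu^{\cU_{p}}$ is an equi-translation bounded closed set and hence complete by quasi-completeness \cite{ARMA}. The orbit closure is moreover invariant under the action (by continuity of each $T_t$, since $T_s(O(\mu))\subseteq O(\mu)$), so $G$ acts on $H_\mu^{\cU_{p}}$. I would then restrict the whole structure to the subspace $(H_\mu^{\cU_{p}},\cU_{p})$, exactly as in the proof of Theorem~\ref{ap-equiv}: this is a complete, $G$-invariant, uniform Hausdorff space carrying a continuous action, it contains $\mu$, and the almost periodicity type of $\mu$ is unchanged on passing to it (the hull of $\mu$ computed inside $H_\mu^{\cU_{p}}$ is again $H_\mu^{\cU_{p}}$, and the entourages restrict compatibly). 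Applying Theorem~\ref{thm:apequiv} on this complete subspace gives the equivalence of (i)--(v) together with the stated dense-range homomorphism $F$ and the surjectivity of $\Psi$, which finishes the proof.
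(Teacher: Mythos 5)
Your proposal is correct and follows essentially the same route as the paper: identify condition (i)/(ii) with Bohr-type/pseudo Bochner-type almost periodicity for the product uniformity $\cU_{p}$, note that $\cU_{p}$ is $G$-invariant and the translation action is equicontinuous, and overcome the unknown completeness of $(\cM^\infty(G),\cU_{p})$ by quasi-completeness, since the hull $H_\mu^{\cU_{p}}$ is closed and equi-translation bounded, hence complete, so that Theorem~\ref{thm:apequiv} applies on the hull. Your write-up merely supplies details the paper leaves implicit (the explicit basis of entourages, the convolution--translation dictionary, and the restriction argument), which is a faithful expansion rather than a different proof.
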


\subsection{Norm and mixed norm topology}
Consider $X =\cM^\infty(G)$, the space of translation bounded measures. Let $K \subseteq G$ be a fixed compact set with non-empty interior. Then, $K$ defines a norm on $X$ \cite{bm} via
\[
\| \mu \|_{K} := \sup_{t \in G} \left| \mu \right|(t+K) \,.
\]
The space $(X, \| \cdot \|_{K})$ is a Banach space \cite{CRS3}. Let
$\cU_{\text{norm}}$ be the uniformity defined on $\cM^\infty(G)$ by this
norm.

Let us first recall the following definition \cite{bm}.

\begin{definition}
A measure $\mu$ in $\cM^\infty(G)$ is called \textbf{norm almost periodic} if $\mu$ is Bohr-type almost periodic in this topology.
\end{definition}

\smallskip

Norm almost periodic pure point measures are appear naturally in the
CPS  \cite{bm,CRS,LR,NS11} and have been
fully characterized in \cite{NS11}.

Note that the translation action is not continuous on $X$, and while
it is continuous on some orbits (for example it is continuous on the
orbit of measures of the form $\mu =f \theta_G$ for $f \in \Cu(G)$)
it is never continuous on the orbit of non-trivial pure point
measures (see Lemma~\ref{lem1} below). In particular, in this case
we have
\begin{align*}
\text{Bochner-type almost periodicity}
    &\iff  \text{pseudo Bochner-type almost periodicity} \\
    &\implies \text{Bohr-type almost periodicity} \,.
\end{align*}

It turns out that for pure point measures, Bochner-type almost
periodicity is not a relevant concept, as we have the following
lemma.

\begin{lemma}\label{lem1}
Let $\mu \in \cM^\infty(G)$ be a pure point measure.
\begin{itemize}
\item[(a)] If the translation action is continuous at a point $s \in G$ on the set $\{  T_t \mu \,:\, t \in G \}$, then $\mu=0$.
\item[(b)] If $G$ is $\sigma$-compact and uncountable and $\mu$ is pseudo Bochner-type almost periodic, then $\mu=0$.
\end{itemize}
\end{lemma}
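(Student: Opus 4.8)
The plan is to isolate a single quantitative estimate that drives both parts. Suppose $\mu = \sum_i c_i \delta_{x_i} \neq 0$ is pure point and translation bounded, and fix one atom $x_0$ with weight $a := |c_0| > 0$. I would show there is a neighbourhood $O$ of $0$ in $G$ with
\[
\| T_t \mu - \mu \|_K \ \geq\ \tfrac{a}{2} \qquad \text{for all } t \in O \setminus \{0\} \,.
\]
To prove this, first invoke translation boundedness: since $\| \mu \|_K =: C < \infty$, the sum of $|c_i|$ over atoms lying in any translate $s+K$ is at most $C$, so only finitely many atoms in the compact set $s_0 + K$ (chosen with $x_0 \in \operatorname{int}(s_0+K)$) have weight $\geq a/2$. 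As $G$ is Hausdorff, I can separate $x_0$ from this finite set of other ``heavy'' atoms by an open $W$ with $x_0 \in W \subseteq \operatorname{int}(s_0+K)$, and set $O := W - x_0$. For $t \in O \setminus \{0\}$ the point $x_0 + t$ lies in $W$ but differs from $x_0$, hence is not heavy, so $|\mu(\{x_0+t\})| < a/2$. Testing the pure point measure $\nu := T_t\mu - \mu$ against a translate $s^\ast + K$ whose interior contains $x_0+t$ gives $\| \nu \|_K \geq |\nu|(\{x_0+t\}) = |c_0 - \mu(\{x_0+t\})| \geq a - \tfrac{a}{2} = \tfrac{a}{2}$, since $(T_t\mu)(\{x_0+t\}) = \mu(\{x_0\}) = c_0$.

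Part (a) is then immediate from this estimate together with the $G$-invariance of $\| \cdot \|_K$, which reduces continuity of $t \mapsto T_t\mu$ at $s$ to continuity at $0$. Indeed, if $\mu \neq 0$, then every neighbourhood of $0$ contains points $t \neq 0$ (as $0$ is not isolated) with $\| T_t\mu - \mu \|_K \geq a/2$, so $t \mapsto T_t\mu$ cannot be continuous at $0$, hence not at any $s$. Contraposition yields $\mu = 0$.

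For part (b) I would upgrade the local estimate to a uniform discreteness statement about $P_\eps(\mu) = \{ t \in G : \| T_t\mu - \mu \|_K < \eps \}$. Fix $\eps := a/4$. If $s,s' \in P_\eps(\mu)$ are distinct, the triangle inequality and $G$-invariance give $\| T_{s-s'}\mu - \mu \|_K < 2\eps = a/2$, so $s - s' \in P_{a/2}(\mu)$; but the estimate shows $P_{a/2}(\mu) \cap O = \{0\}$, whence $s - s' \notin O$. Choosing a symmetric, relatively compact neighbourhood $O'$ of $0$ with $O' - O' \subseteq O$, the translates $\{ s + O' : s \in P_\eps(\mu) \}$ are therefore pairwise disjoint. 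Using Haar measure $\theta_G$ (so $\theta_G(O') > 0$) and $\sigma$-compactness $G = \bigcup_n K_n$, at most $\theta_G(\overline{K_n + O'})/\theta_G(O') < \infty$ of these disjoint translates fit inside $K_n$, so each $P_\eps(\mu) \cap K_n$ is finite and $P_\eps(\mu)$ is countable. On the other hand, pseudo Bochner-type almost periodicity means $P_\eps(\mu)$ is finitely relatively dense, i.e. $G = P_\eps(\mu) + F$ for some finite $F$, which forces $G$ to be countable, contradicting that $G$ is uncountable. Hence $\mu = 0$.

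The main obstacle, and the reason translation boundedness must be invoked rather than a naive ``small translations move the support off itself'' argument, is that the atoms of a translation bounded pure point measure may accumulate and even be dense (e.g. $\sum_n 2^{-n}\delta_{q_n}$). The estimate circumvents this by localizing to heavy atoms, of which there are only finitely many per translate of $K$. The only genuinely new ingredient for (b) beyond (a) is the measure-theoretic counting step, which converts uniform discreteness of $P_\eps(\mu)$ in a $\sigma$-compact group into countability and thereby clashes with finite relative density in an uncountable $G$.
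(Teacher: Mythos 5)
Your proof is correct, and part (a) follows essentially the paper's own route: both arguments isolate an atom $x_0$ of mass $a>0$, use finiteness of $|\mu|$ on a compact neighbourhood to see that only finitely many atoms there have mass $\geq a/2$, excise those finitely many points, and conclude that every nearby nontrivial translate satisfies $\| T_t\mu - \mu\|_K \geq a/2$. (Both your proof and the paper's tacitly assume $G$ is non-discrete at the step where a punctured neighbourhood of $0$, resp.\ of $s$, must be nonempty; you at least flag this with ``as $0$ is not isolated'', whereas the paper's ``for all $t \in V$'' silently excludes $t=s$.) Part (b) is where you genuinely diverge. The paper argues directly on the atoms of $\mu$: with $\eps = a/2$, any almost period $t \in P_\eps(\mu)\cap K'$ forces $|\mu(\{t+x\})| > a/2$, so $t \mapsto t+x$ injects $P_\eps(\mu)\cap K'$ into the heavy atoms of $\mu$ in $x+K'$, and summing masses, $\sum_{t} a/2 \leq |\mu|(x+K') < \infty$ makes this intersection finite; local finiteness plus $\sigma$-compactness then gives countability of $P_\eps(\mu)$. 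You instead recycle the quantitative estimate from (a): the triangle inequality and $G$-invariance of the norm make $P_{a/4}(\mu)$ uniformly discrete (distinct almost periods satisfy $s-s' \notin O$), and a Haar-measure packing argument inside each $K_n$ bounds the number of disjoint translates $s+O'$, giving finiteness of $P_{a/4}(\mu)\cap K_n$. Both routes reach the same contradiction: a countable, finitely relatively dense set forces $G$ to be countable. The paper's counting is more economical --- no Haar measure, no uniform discreteness step, and the bound comes intrinsically from $\mu$ itself --- while yours is more modular, since a single estimate drives both parts, and it yields the slightly stronger intermediate fact that $P_{a/4}(\mu)$ is uniformly discrete rather than merely locally finite.
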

\begin{proof}
(a) Assume by contradiction that $\mu \neq 0$. Then, since $\mu$ is pure point, there exists some $x \in G$ such that
\[
|\mu(\{x\})| = a >0 \,.
\]
Next, fix some open precompact set $U$ with $s \in U$. Since $\mu$ is a measure, we have
\[
| \mu| (x-s+U) < \infty \,,
\]
and hence, the set
\[
F:= \{ y \in x-s+U \,:\, | \mu(\{y\}) | \geq \frac{a}{2} \}
\]
is a finite set. Let $F'= F \backslash \{ x\}$. Then, the set
\[
V:= U \backslash (s-x+F')
\]
is an open neighbourhood of $s$, and for all $t \in V$ we have
\begin{align*}
| T_s\mu (\{ -s+x \}) -T_t\mu\{ -s+x\} |
    &\geq | T_s\mu (\{ -s+x \})| -| T_t\mu\{ -s+x\} |\\
    & =|\mu (\{ x \})| -| \mu\{ t-s+x\} | \geq a-\frac{a}{2} = \frac{a}{2} \,,
\end{align*}
and hence
\[
\| T_s \mu - T_t \mu \|_{K}  \geq \frac{a}{2} \,.
\]
This contradicts the fact that $T$ is continuous at $s$.

\medskip

\noindent (b) Assume by contradiction that $\mu \neq 0$. The argument is similar to the one in (a).
Since $\mu$ is pure point, there exists some $x \in G$ such that
\[
|\mu(\{x\})| = a >0 \,.
\]
Next, fix some open precompact set $U$ with $0 \in U$ and set $\eps = \frac{a}{2}$. We show that
\[
P_\eps := \{ t \in G \,:\, \| T_t \mu - \mu \|_K < \eps \}
\]
is locally finite and hence countable, by the $\sigma$-compactness of $G$. Since $P_\eps$ is finitely relatively dense, this implies that $G$ is countable, a contradiction.

Indeed, let $K \subseteq G$ be any compact set and let
\[
F:= P_\eps \cap K \,.
\]
We want to show that $F$ is finite. First note that for all $t \in F$ we have $ \| T_t \mu - \mu \|_K < \eps$ and hence
\[
\left| \mu(\{t+x\}) -\mu(\{x \}) \right| < \frac{a}{2} \,.
\]
Since $\mu(\{x \})=a$ this gives $|\mu(\{t+x\})| > \frac{a}{2}$.

It follows from here that
\[
\sum_{t \in F} \frac{a}{2} \leq \sum_{t \in F} \left| \mu(\{t+x\}) \right| \leq \left| \mu \right| (x+K) < \infty \,,
\]
giving that $F$ is finite, and completing the proof.
\end{proof}

Since $\cU_{\text{norm}}$ is given by a metric, and $G$-invariant, we can define the mixed uniformity from Section~\ref{mix}. We will denote the mixed uniformity by $\cU_{m-n}$
 and refer to the topology induced by this uniformity as the \textbf{mixed-norm topology} for measures.

Now, Corollary~\ref{cor1} has the following consequence.

\begin{coro}
Let $G$ be metrisable, and let $\mu \in \cM^\infty(G)$. Then, the following assertions are equivalent.
\begin{itemize}
  \item[(i)] $\mu$ is norm almost periodic.
  \item[(ii)] $H^{\cU_{m-n}}_\mu$ is compact.
  \item[(iv)] $H^{\cU_{m-n}}_\mu$ is a compact  Abelian group, with the group operation induced by
  \[
  (T_t \mu) \oplus (T_s \mu):= T_{t+s} \mu \,.
  \]
 \item[(v)] There exists a continuous function $\Psi : G_{\mathsf{b}} \to H^{\cU_{m-n}}_\mu$ such that
\[
T_tf= \Psi( i_{\mathsf{b}}(t)) \, \mbox{ for all } t \in G \,.
\]
\end{itemize}
Moreover, in this case $F(t)=\alpha(t,x)$ define a group homomorphism with dense range $F : G \to H^{\cU_{m-n}}_\mu$, and $\Psi$ is an onto group homomorphism.
\end{coro}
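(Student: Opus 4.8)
The plan is to exhibit this statement as a direct specialization of Corollary~\ref{cor1}, taking $X = \cM^\infty(G)$ with the translation action $\alpha(t,\mu) = T_t\mu$ and the metric $d(\mu,\nu) := \|\mu-\nu\|_K$, so that $\cU_d = \cU_{\text{norm}}$ and, by definition, $\cU_{m-n} = (\cU_{\text{norm}})_{\mathsf{mix}}$. Since the translation action fails to be continuous on nontrivial pure point measures (Lemma~\ref{lem1}), the direct route through Theorem~\ref{thm:apequiv} is unavailable; Corollary~\ref{cor1} is tailored to exactly this situation, as it requires only a pseudo $G$-invariant metric on a complete space together with metrisability of $G$, and routes the equivalences through the mixed uniformity.

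First I would verify the three hypotheses of Corollary~\ref{cor1}. That $(X,d)$ is complete is immediate from the fact that $(\cM^\infty(G), \|\cdot\|_K)$ is a Banach space \cite{CRS3} (and $d$ is finite-valued, since a difference of translation bounded measures is again translation bounded). That $d$ is pseudo $G$-invariant follows from the stronger observation that the norm itself is $G$-invariant: for every $t \in G$,
\[
\|T_t\mu\|_K = \sup_{s \in G} |\mu|(s-t+K) = \|\mu\|_K \,,
\]
whence $d(T_t\mu, T_t\nu) = \|T_t(\mu-\nu)\|_K = \|\mu-\nu\|_K = d(\mu,\nu)$. The remaining hypothesis, that $G$ be metrisable, is exactly what we have assumed.

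With the hypotheses in hand, the equivalences are obtained by matching the conditions of Corollary~\ref{cor1} one by one. By definition, norm almost periodicity of $\mu$ is Bohr-type almost periodicity in $(X, \cU_{\text{norm}})$, i.e. condition (i) of the corollary; compactness of $H^{\cU_{m-n}}_\mu$ is Bochner-type almost periodicity in $(X, (\cU_{\text{norm}})_{\mathsf{mix}})$, i.e. condition (iii); the compact Abelian group description is condition (v); and the existence of the continuous map $\Psi : G_{\mathsf{b}} \to H^{\cU_{m-n}}_\mu$ is condition (vi). The final ``Moreover'' clause, that $F(t) = \alpha(t,\mu)$ is a group homomorphism with dense range and that $\Psi$ is onto, is inherited verbatim from the corresponding clause of Corollary~\ref{cor1}.

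I do not anticipate a genuine mathematical obstacle; all the work lives in the abstract theory already developed. The only points demanding attention are the bookkeeping of identifying each listed property of $\mu$ with the correspondingly phrased property in Corollary~\ref{cor1}, and the verification, carried out above, that the norm metric is genuinely $G$-invariant and complete, so that Corollary~\ref{cor1} truly applies.
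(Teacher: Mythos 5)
Your proposal is correct and is essentially the paper's own proof: the paper likewise observes that $\|\cdot\|_K$ gives a complete ($(\cM^\infty(G),\|\cdot\|_K)$ is a Banach space), $G$-invariant metric, forms the mixed uniformity $\cU_{m-n}=(\cU_{\text{norm}})_{\mathsf{mix}}$, and then cites Corollary~\ref{cor1} directly, with the listed items matching conditions (i), (iii), (v) and (vi) of that corollary exactly as you describe. Your explicit verification of the hypotheses (translation invariance of the norm, completeness, metrisability of $G$) just spells out what the paper states in the surrounding text.
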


 \subsection*{Acknowledgments} NS was supported by NSERC with grant 2020-00038. TS was supported by the German Research Foundation (DFG) via the CRC 1283.

\end{document}